\title[Fixed Points of Parking Functions]{Fixed Points of Parking Functions}
\subjclass[2010]{Primary 05A19, 55M20, 05E10, 05A05}
\author[J.~McCammond]{Jon McCammond}
\address[J.~McCammond]{University of California, Santa Barbara}
\email{jon.mccammond@math.ucsb.edu}
\author[H.~Thomas]{Hugh Thomas}
\address[H.~Thomas]{LaCIM, Universit\'e du Qu\'ebec \`a Montr\'eal}
\email[H.~Thomas]{hugh.ross.thomas@gmail.com}
\author[N.~Williams]{Nathan Williams}
\address[N.~Williams]{University of Texas at Dallas}
\email{nathan.f.williams@gmail.com}
\newif\ifshowtikz
\newcommand{\margincolor}{red}      
\definecolor{darkgreen}{rgb}{0,0.7,0}
\newcounter{margincounter}
\newcommand{\marginnum}{
\ifnum\value{margincounter}<10
\textcolor{\margincolor}{\begin{picture}(0,0)\put(2.2,2.4){\circle{9}}\end{picture}\footnotesize\arabic{margincounter}}
\else\ifnum\value{margincounter}<100
\textcolor{\margincolor}{\begin{picture}(0,0)\put(4.256,2.5){\circle{11}}\end{picture}\footnotesize\arabic{margincounter}}
\else
\textcolor{\margincolor}{\begin{picture}(0,0)\put(6.8,2.5){\circle{14}}\end{picture}\footnotesize\arabic{margincounter}}
\fi\fi
}
\let\oldtikzpicture\tikzpicture
\let\oldendtikzpicture\endtikzpicture
\renewenvironment{tikzpicture}{%
    \ifshowtikz\expandafter\oldtikzpicture%
    \else\comment%
    \fi
}{%
    \ifshowtikz\oldendtikzpicture%
    \else\endcomment%
    \fi
}
\definecolor{darkblue}{rgb}{0.0,0,0.7}
\newcommand{\darkblue}{\color{darkblue}}
\definecolor{darkred}{rgb}{0.7,0,0}
\definecolor{lightgrey}{rgb}{0.7,0.7,0.7}
\theoremstyle{plain}
\newtheorem{theorem}{Theorem}[section]
\newtheorem{proposition}[theorem]{Proposition}
\newtheorem{corollary}[theorem]{Corollary}
\newtheorem{lemma}[theorem]{Lemma}
\theoremstyle{definition}
\newtheorem{definition}[theorem]{Definition}
\newtheorem{example}[theorem]{Example}
\newtheorem{remark}[theorem]{Remark}
\newcommand{\defn}[1]{\emph{\darkblue #1}}
\newcommand{\w}{{\sf w}}
\newcommand{\pp}{{\sf p}}
\newcommand{\qq}{{\sf q}}
\newcommand{\swg}{{\sf sweep}}
\definecolor{keywords}{RGB}{255,0,90}
\definecolor{comments}{RGB}{0,0,113}
\definecolor{red}{RGB}{160,0,0}
\definecolor{green}{RGB}{0,150,0}
\newcommand{\Dyck}{{\mathcal{DF}}}
\newcommand{\Fix}{{\mathrm{Fix}}}
\newcommand{\DyckW}{{\mathcal{DW}}}
\newcommand{\Bal}{{\mathcal{BF}}}
\newcommand{\Park}{{\mathcal{PW}}}
\newcommand{\PF}{{\mathcal{PT}}}
\newcommand{\BF}{{\mathcal{BT}}}
\newcommand{\FF}{{\mathcal{F}}}
\newcommand{\EF}{{\widetilde{\mathcal{F}}}}
\newcommand{\idl}{{\mathfrak{i}}}
\newcommand{\del}{{\mathfrak{d}}}
\newcommand{\bal}{{\mathfrak{b}}}
\newcommand{\prk}{{\mathfrak{p}}}
\newcommand{\So}{{\mathcal{S}}}
\newcommand{\Sym}{{\mathfrak{S}}}
\newcommand{\ASym}{{\widetilde{\mathfrak{S}}}}
\newcommand{\aw}{{w}}
\newcommand{\Astar}{A}
\newcommand{\Bstar}{B}
\newcommand{\A}{{\mathcal{A}}}
\newcommand{\ZZ}{{\mathbb{Z}}}
\newcommand{\CC}{{\mathbb{C}}}
\newcommand{\RR}{{\mathbb{R}}}
\newcommand{\xx}{{\mathbf{x}}}
\newcommand{\yy}{{\mathbf{y}}}
\newcommand{\zz}{{\mathbf{z}}}
\newcommand{\maj}{{\mathsf{maj}}}
\newcommand{\inv}{{\mathsf{inv}}}
\newcommand{\area}{{\mathsf{area}}}
\newcommand{\dinv}{{\mathsf{dinv}}}
\newcommand{\bounce}{{\mathsf{bounce}}}
\renewcommand{\mod}{\operatorname{mod}}
\begin{document}

\begin{abstract}
We define an action of words in $[m]^n$ on $\RR^m$ to give a new characterization of rational parking functions---they are exactly those words whose action has a fixed point.  We use this viewpoint to give a simple definition of Gorsky, Mazin, and Vazirani's zeta map on rational parking functions when $m$ and $n$ are coprime~\cite{gorsky2016affine}, and prove that this zeta map is invertible.  A specialization recovers Loehr and Warrington's sweep map on rational Dyck paths~\cite{armstrong2015sweep,thomas2015sweeping,gorsky2017rational}.
\end{abstract}

\maketitle

\section{Introduction}
\subsection{Parking Words}
\emph{Let $m$ and $n$ be positive integers, not necessarily coprime.}  Classical parking words have a well-known interpretation in the language of parking cars.  There are $n$ parking places and $n$ cars, each indexed  from $0$ to $n{-}1$.  As in~\cite[Section 6]{konheim1966occupancy}, car $i$ has a preference for parking place $\pp_i$, and cars attempt to park as follows: for $0 \leq i \leq n-1$, car $i$ takes the unoccupied parking place with the lowest number larger than or equal to $\pp_i$, should such a parking place exist.
The \defn{classical parking words} $\Park_n$ are defined as those words for which each car is able to park.

The 16 parking words in $\Park_3$ are given on the left side of~\Cref{fig:parkwords}.  Garsia introduced a combinatorial interpretation of $\Park_n$ as certain super-diagonal labelled paths in an $n {\times} n$ square, which has served as the basis of many subsequent investigations.  Replacing the square by an $m \times n$ rectangle gives the \defn{(m,n)-parking words} $\Park_{m}^n$---those words \begin{align}\begin{split}\pp = \pp_0 \cdots \pp_{n-1} \in [m]^n &:= \{0,1,\ldots,m{-}1\}^n  \text{ such that } \\  \Big| \big\{ j : \pp_j < i \big\}\Big| &\geq \frac{in}{m} \text{ for } 1 \leq i \leq m.\label{eq:parking_def}\end{split}\end{align}  The classical parking words are recovered as $\Park_n = \Park_{n{+}1}^{n}$.  The 25 parking words in $\Park_5^3$ are illustrated on the right side of~\Cref{fig:parkwords}. 

\begin{figure}[htb]
\begin{tabular}{ccccc}
000 & 001 & 002 & 011 & 012\\
    & 010 & 020 & 101 & 021\\
    & 100 & 200 & 110 & 102\\
    &     &     &     & 120\\
    &     &     &     & 201\\
    &     &     &     & 210
\end{tabular}\hfill
\begin{tabular}{ccccccc}
000 & 001 &    002 &    003 &    011 &    012 &    013\\
    & 010 &    020 &    030 &    101 &    021 &    031\\
    & 100 &    200 &    300 &    110 &    201 &    301\\
    &     &        &        &        &    210 &    310\\
    &     &        &        &        &    120 &    130\\
    &     &        &        &        &    102 &    103
\end{tabular}
\caption{Left: the $16$ $(4,3)$-parking words in $\Park_3$ (these are also the $(3,3)$-parking words).  Right: the $25$ $(5,3)$-parking words in $\Park_5^3$.  Each column is an orbit under $\mathfrak{S}_3$.}
\label{fig:parkwords}
\end{figure}

\subsection{A New Characterization of Parking Words}
Our main result is a new characterization of $(m,n)$-parking words as \emph{piecewise-linear functions} from $\mathbb{R}^m$ to $\mathbb{R}^m$.  This characterization is new even for classical parking words.  Define \begin{align*} V_0^m := \Big\{ \xx = (x_0,\ldots,x_{m-1}) \in \mathbb{R}^m : \sum_{i=0}^{m-1} x_i = 0 \text{ and } x_0\leq x_1 \leq \cdots \leq x_{m-1} \Big\}. \end{align*}  A letter $i \in [m]$ acts on $\mathbf{x} \in V_0^m$ by adding $m$ to $x_i$, subtracting the tuple $\mathbbm{1}_m:=(1,1,\ldots,1)$, and then resorting.  A word $\w \in [m]^n$ acts on $\mathbf{x} \in V_0^m$  by acting by its letters from left to right.  The following theorem distinguishes parking words in $[m]^n$ by their action on $V_0^m$.

\begin{theorem}
\label{thm:park_char}
The action of $\w \in [m]^n$ on $V_0^m$ has a fixed point if and only if $\w$ is an $(m,n)$-parking word.  More precisely,
the action of $\w \in [m]^n$ on $V_0^m$:
\begin{itemize}
    \item has a {\bf unique} fixed point iff $\w \in \Park_m^n$ and $\gcd(m,n)=1$;
    \item has {\bf infinitely many} fixed points iff $\w \in \Park_m^n$ and $\gcd(m,n)> 1$; and
    \item has {\bf no} fixed points iff $\w \in [m]^n \setminus \Park_m^n$.
\end{itemize}
\end{theorem}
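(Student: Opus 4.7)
My plan is to use the statistic $S_i(\mathbf{y}) := \sum_{k=0}^{i-1} y_k$ on sorted vectors $\mathbf{y}\in V^m$ as the main tool. A case analysis on where the boosted coordinate $x_{\w_j}+m-1$ lands after re-sorting shows that applying the letter $\w_j\in[m]$ to a sorted $\mathbf{x}$ changes $S_i$ by exactly $-i$ whenever $\w_j \ge i$ (since $x_{\w_j}+m-1$ necessarily re-sorts to a position $\ge i$, leaving the bottom $i$ positions to hold $x_0{-}1,\dots,x_{i-1}{-}1$) and by at most $m{-}i$ whenever $\w_j < i$ (with equality precisely when the boosted coordinate re-sorts back into the bottom $i$, and otherwise equal to the strictly smaller value $x_i - x_{\w_j} - i$). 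Summing these increments over all $n$ letters of $\w$ and writing $S := |\{j:\w_j<i\}| = \sum_{k<i}c_k$ where $c_k$ denotes the multiplicity of $k$ in $\w$, I obtain
\[ S_i\bigl(F_\w(\mathbf{x})\bigr) - S_i(\mathbf{x}) \;\le\; S(m-i) - (n-S)\,i \;=\; Sm - ni, \]
where $F_\w$ denotes the full action of $\w$ on $V^m$. For any fixed point the left-hand side vanishes, forcing $Sm\ge ni$ for every $1\le i\le m$---precisely the parking condition \eqref{eq:parking_def}. This handles the third bullet of the theorem.

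For the first two bullets I would track each coordinate of $\mathbf{x}$ as a labelled ``ball'': let $k_b$ count the steps $j$ at which ball $b$ occupies position $\w_j$, and let $\Sigma\in\Sym_m$ record each ball's final position. Ball $b$'s total value change over the whole word is $mk_b-n$, so a fixed point satisfies $x_{\Sigma(b)}-x_b = mk_b - n$ for every $b$. Summing around each cycle $C$ of $\Sigma$ forces $m\sum_{b\in C}k_b = n|C|$, so every cycle of $\Sigma$ must have length divisible by $m/\gcd(m,n)$. When $\gcd(m,n)=1$, $\Sigma$ is forced to be a single $m$-cycle and the linear system, combined with $\sum_b x_b = 0$, determines $\mathbf{x}$ as a single point. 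When $\gcd(m,n)>1$, $\Sigma$ may have several cycles, and shifting $(x_b)$ along each cycle by an independent constant preserves the equations, producing a positive-dimensional family of candidate fixed points.

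The main obstacle is the existence direction: turning the above necessary structural conditions into an actual construction of a fixed point for each parking word $\w$. Given $\w$ one must exhibit specific $(\Sigma,(k_b))$ that is genuinely realized by the dynamics and verify that the resulting candidate $\mathbf{x}$ lies in the correct chamber of the piecewise-affine map $F_\w$---i.e., that at every intermediate step the sort resolves exactly as the construction demands. My plan is to identify $(\Sigma,(k_b))$ via a combinatorial recipe depending on $\w$ (such as reading off a labelled $(m,n)$-Dyck path) and then verify chamber consistency step by step along $\w$, using the parking inequalities $Sm\ge ni$ at each intermediate $i$ to control the tie-breaking in each sort, so that the dynamics stays in the prescribed chamber throughout.
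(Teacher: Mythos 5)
Your $S_i$-monotonicity argument for the third bullet is correct and is a genuinely different — and arguably cleaner — route than the paper's. The paper (Lemma \ref{lem:no_fixed}) shows that for a non-parking $\w$ the ``bottom block'' of coordinates drifts away under iteration; your per-step accounting of the exact change in $S_i(\mathbf{y})=\sum_{k<i}y_k$ (exactly $-i$ when $\w_j\ge i$, at most $m-i$ when $\w_j<i$) gives a sharp linear inequality $S_i(F_\w(\mathbf{x}))-S_i(\mathbf{x})\le Sm-ni$ that immediately rules out fixed points when the parking condition fails. That part is a complete, self-contained proof and could be dropped into the paper as-is.

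However, the other direction carries genuine gaps beyond the one you name. First, for existence you only sketch a plan; the paper's proof (Lemma \ref{lem:rp_fixed}) shows that $\w$ is $1$-Lipschitz (Lemma \ref{rem:action}) and strictly decreases the norm $|\xx|$ outside a large ball, and then invokes Brouwer's fixed point theorem. There is no constructive recipe for $(\Sigma,(k_b))$ given in the paper, and building one that certifiably stays in the prescribed chamber along all $n$ steps is precisely where the difficulty lies; you should be aware that the authors instead take a topological shortcut around the chamber-consistency problem you foresee. Second, your uniqueness argument for $\gcd(m,n)=1$ is incomplete even granting existence: your cycle constraints show that, \emph{given} the realized $(\Sigma,(k_b))$, the linear system has at most one solution, but nothing in your argument precludes two different fixed points $\mathbf{x}\ne\mathbf{x}'$ arising from distinct ball-tracking data $(\Sigma,(k_b))$ and $(\Sigma',(k'_b))$. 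The paper closes this gap via convexity: the $1$-Lipschitz property forces $\Fix(\pp)$ to be convex (Lemma \ref{lem:convex}), and then the residue-class rigidity argument (Lemmas \ref{lem:ad} and \ref{lem:contained}) shows it lies in an affine subspace of dimension $d-1$, so $d=1$ gives a single point. You will need either that convexity observation or an argument that the data $(\Sigma,(k_b))$ is globally forced. Third, for $\gcd(m,n)>1$ your shifts produce only ``candidate'' points; the paper has to separately handle parking words with touch points (where fixed points of arbitrarily large norm exist, Lemma \ref{lem:nrp_fixed}) versus without (where $\Fix(\pp)$ is bounded but still $(d-1)$-dimensional, Lemma \ref{lem:nrp_fixed0}), a dichotomy that does not appear in your outline and that your cycle-shifting construction does not account for.
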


The motivation for~\Cref{thm:park_char} comes from generalizations of the space of diagonal coinvariants and the zeta map on parking functions, as we now explain.

\subsection{Coinvariants and the Symmetric Group}  The Hilbert series for the space of coinvariants is the generating function for two important statistics on the $n!$ permutations in $\Sym_n$:
\begin{equation} \mathrm{Hilb}\left(\CC[\xx_n] / \langle \CC[\xx_n]_+^{\Sym_n} \rangle; q\right) = \sum_{w \in \Sym_n} q^{\inv(w)} = \sum_{w \in \Sym_n} q^{\maj(w)},\label{eq:coinv}\end{equation} where $\CC[\xx_n]$ is shorthand for a polynomial ring in $n$ variables and $\langle \CC[\xx_n]_+^{\Sym_n} \rangle$ is the ideal of $\CC[\xx_n]$  generated by symmetric polynomials with no constant term.

Artin gave a basis for this space using the code of a permutation to reflect the first generating function of~\Cref{eq:coinv}~\cite{artin1944galois}, while Garsia and Stanton found a basis using the descents of a permutation to explain the second~\cite{garsia1984group}. 

A statistic with the same distribution as $\inv$ or $\maj$ is eponymously named \defn{mahonian}~\cite{macmahon1913indices}, but Foata gave the first bijection sending one statistic to the other~\cite{foata1968netto}.  Exploiting the fact that this bijection preserves descents of the inverse permutation, Foata and Sch\"utzenberger later found an involution that \emph{interchanges} $\inv$ and $\maj$~\cite{foata1978major}.

\subsection{Diagonal Coinvariants}  The study of the space of diagonal coinvariants originated with Garsia and Haiman; its relationship to parking words was first suggested by Gessel~\cite{haiman1994conjectures,garsia1996remarkable}.  More precisely, Carlsson and Mellit's recent proof of the shuffle conjecture~\cite{haglund2005combinatorial,carlsson2015proof,haglund2017lecture} implies the long-suspected fact that the bigraded Hilbert series of the space of diagonal coinvariants is encoded as a positive sum over the $(n{+}1)^{n{-}1}$ parking words $\Park_n$~\cite{haiman2002vanishing,haglund2005conjectured}:\footnote{Carlsson and Mellit actually proved a stronger result, giving an explicit formula for the \emph{Frobenius series} for the space of diagonal coinvariants.}

\begin{equation} \mathrm{Hilb}\left(\CC[\xx_n,\mathbf{y}_n] / \langle \CC[\xx_n,\mathbf{y}_n]_+^{\Sym_n} \rangle; q,t\right) = \sum_{\pp \in \Park_n} q^{\dinv(\pp)} t^{\area(\pp)},\label{eq:diagonal_coinvariants}\end{equation} where $q$ records the degree of the variables $\xx_n$, $t$ the degree of $\mathbf{y}_n$, and $\area$ and $\dinv$ are certain statistics on parking functions.  Recently, Carlsson and Oblomkov artfully merged the Artin and Garsia-Stanton bases to give an explicit basis of the space of diagonal coinvariants~\cite{carlsson2018affine}, explaining the generating function in~\Cref{eq:diagonal_coinvariants}.

It is known from~\Cref{eq:diagonal_coinvariants} that $\area$ and $\dinv$ are symmetric, i.e.,
   \begin{equation}\sum_{\pp \in \Park_n} q^{\area(\pp)} t^{\dinv(\pp)} = \sum_{\pp \in \Park_n} q^{\dinv(\pp)} t^{\area(\pp)}.\label{eq:qtsym}\end{equation}
However, it is a long-standing open problem to find an involution that interchanges $\area$ and $\dinv$---in the style of Foata and Sch\"utzenberger's involution for $\inv$ and $\maj$---thus combinatorially proving~\Cref{eq:qtsym}.
     This problem is still wide open, even for the alternating subspace~\cite{carleton,gillespie2016combinatorial}.  As a first step towards this elusive involution, the equidistribution of $\dinv$ and $\area$---obtained by setting $t=1$ in~\Cref{eq:qtsym}---was proven combinatorially by Loehr and Remmel~\cite{loehr2004conjectured,haglund2005conjectured}~\cite[Corollary 5.6.1]{haglund2008q}:
\begin{theorem}[{\cite{loehr2004conjectured}}]\label{thm:haglund_loehr}
For $n\geq 1$,
	\[\sum_{\pp \in \Park_n} q^{\area(\pp)} = \sum_{\pp \in \Park_n} q^{\dinv(\pp)}.\]\end{theorem}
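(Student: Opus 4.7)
The plan is to prove~\Cref{thm:haglund_loehr} by exhibiting an explicit bijection $\zeta : \Park_n \to \Park_n$ satisfying $\area(\pp) = \dinv(\zeta(\pp))$; the equidistribution then follows immediately by summing over $\Park_n$. The bijection will be the $(m,n)=(n{+}1,n)$ specialization of the invertible zeta map promised by the abstract, built directly from~\Cref{thm:park_char}. Because $\Park_n = \Park_{n{+}1}^n$ with $\gcd(n{+}1,n) = 1$, the first bullet of~\Cref{thm:park_char} attaches to each $\pp \in \Park_n$ a \emph{unique} fixed point $\xx_\pp \in V^{n{+}1}$, from which $\zeta$ can be built.

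First I would form the closed trajectory $\xx_\pp = \xx_\pp^{(0)}, \xx_\pp^{(1)}, \ldots, \xx_\pp^{(n)} = \xx_\pp$ obtained by applying the letters of $\pp$ to $\xx_\pp$ one step at a time. Each step adds $n{+}1$ to one sorted slot, subtracts $\mathbbm{1}_{n{+}1}$, and resorts; the record of \emph{which} entry lands in which slot under the re-sort is auxiliary data that lets me re-read the same loop in a canonical alternative order, producing a word $\zeta(\pp) \in [n{+}1]^n$. I would then verify that $\zeta(\pp)$ traces out the \emph{same} closed loop, so in particular has $\xx_\pp$ as a fixed point; by~\Cref{thm:park_char} this forces $\zeta(\pp) \in \Park_n$.

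Second, I would show $\zeta$ is invertible. The coprimality $\gcd(n{+}1,n) = 1$ is crucial: it guarantees uniqueness of the fixed point throughout, so the dual action (subtract $n{+}1$ from a coordinate and add $\mathbbm{1}_{n{+}1}$ before resorting) lets me reverse the trajectory step by step and recover $\pp$ from $\zeta(\pp)$. Without coprimality one would only obtain a well-defined map on equivalence classes of fixed points, which is exactly why the invertibility statement in the abstract is restricted to this case.

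The hard part will be the statistic match $\area(\pp) = \dinv(\zeta(\pp))$. Both $\area$ and $\dinv$ must be reinterpreted as intrinsic invariants of the closed trajectory rather than of the particular readout: I expect $\area$ to emerge as a sum involving the coordinates of $\xx_\pp$ (suitably normalized), while $\dinv$ of the alternative readout records a symmetric quantity for the same trajectory. Establishing this correspondence is the combinatorial substance of the zeta/sweep viewpoint and the reason a specialization recovers the Loehr--Warrington sweep map on rational Dyck paths cited in the abstract; it is also where the bulk of the bookkeeping lives.
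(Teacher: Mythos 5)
Your outline correctly identifies the paper's strategy: \Cref{thm:haglund_loehr} is the $m = n{+}1$ specialization of \Cref{thm:rat_sym}, proved by building a zeta map out of the fixed-point characterization of \Cref{thm:park_char}, with coprimality supplying uniqueness of the fixed point and hence invertibility. So far, this matches the paper. But the central step of your construction has a gap.

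You want to define $\zeta(\pp)$ by re-reading the closed loop $\xx_\pp^{(0)},\ldots,\xx_\pp^{(n)}=\xx_\pp^{(0)}$ of $\pp$ acting on its own fixed point, and you then propose to verify that ``$\zeta(\pp)$ traces out the same closed loop, so in particular has $\xx_\pp$ as a fixed point.'' This is self-defeating: the trajectory of a word acting on $V^{n+1}$ determines the word, since comparing consecutive sorted tuples reveals exactly which slot was incremented at each step. The only word that traces that loop \emph{as an action on $V^{n+1}$} is $\pp$ itself, so any $\zeta$ satisfying your verification would be the identity. The paper's $\zeta = B\circ A^{-1}$ (\Cref{def:parka}, \Cref{def:parkb}, \Cref{def:zetamap}) does not work this way. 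The map $A$ reads $\pp$ as the column heights of a parking path, producing a filter tuple $\prk = A^{-1}(\pp)$ and thereby a \emph{different} loop $m(\prk^{(0)}),\ldots,m(\prk^{(n)})$; the map $B$ records the relative positions removed along that other loop, and $\zeta(\pp)=B(\prk)$ acts on $m(\prk^{(0)})$, which is generally not $\xx_\pp$. In the proof of \Cref{thm:b_is_bij}, \Cref{thm:park_char} is applied to the $\zeta(\pp)$-loop to conclude that $B(\prk)$ has some fixed point (namely $m(\prk^{(0)})$), and uniqueness of that fixed point is what recovers $\prk$ from $B(\prk)$ and proves injectivity. Your sketch collapses the two loops into one; once collapsed, there is no nontrivial ``canonical alternative order'' to read, and the argument never gets off the ground. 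A secondary point: the statistic match you flag as the hard part is in fact nearly definitional in the paper, since $\area$ and $\dinv$ are read directly off $A(\prk)$ and $B(\prk)$; the genuinely hard part is \Cref{thm:park_char}, which powers the bijectivity of $B$.
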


This bijection on $\Park_n$ takes $\area$ to $\dinv$, combinatorially proving the symmetry of~\Cref{thm:haglund_loehr}.  It was first understood, generalized, and inverted for the alternating subspace, where it was called the \defn{zeta map}~\cite{krattenthaler2002enumeration,haglund2003conjectured,haglund2005conjectured,garsia2002proof,haglund2008q,armstrong2015sweep,thomas2015sweeping}.  It has been rediscovered many times.  We review the history of the zeta map in~\Cref{sec:zeta_history}.

\subsection{Rational Parking Words and the Affine Symmetric Group}
\emph{We now assume $m$ and $n$ are coprime}.  The classical parking words $\Park_n$, their statistics $\area$ and $\dinv$, and the shuffle conjecture have all been (at least combinatorially) generalized to the $(m,n)$-parking words $\Park_{m}^n$~\cite{bergeron2015compositional,armstrong2016rational, gorsky2016affine, gorsky2015refined, thiel2016anderson,gorsky2017rational}.

The Fuss $(nk{+}1,n)$ generalization of the story of diagonal coinvariants is due to Garsia and Haiman~\cite{haiman1998t,garsia1996remarkable}.  Writing $\mathcal{A}$ for the ideal generated by the alternating polynomials in $\CC[\xx_n,\mathbf{y}_n]$, Mellit proved the rational shuffle conjecture in ~\cite{mellit2016toric}, which implies that \[\text{Hilb}\left(\mathcal{A}^{k-1}/\mathcal{A}^{k-1} \CC[\xx_n,\mathbf{y}_n]_+^{\Sym_n}; q,t\right) = \sum_{\pp \in \Park_{kn{+}1}^n} q^{\area(\pp)} t^{\dinv(\pp)}. \]

The more general rational $(m,n)$ version comes from Hikita's study of the Borel-Moore homology of affine type $A$ Springer fibers, which has a natural basis indexed by the $m^{n-1}$ elements of the affine symmetric group $\widetilde{\Sym}_n$ lying inside an $m$-fold dilation of the fundamental alcove~\cite{shi1987sign,haiman1994conjectures,cellini2002ad,cherednik2003double,sommers2003b,hikita2014affine,gorsky2016affine,thiel2016anderson}.  Thus, while the space of coinvariants $\CC[\xx_n] / \langle \CC[\xx_n]_+^{\Sym_n} \rangle$ is related to the symmetric group $\Sym_n$, the diagonal coinvariants are related to the affine symmetric group $\ASym_n$.

There are many bijections from these affine elements to the parking words $\Park_{m}^n$.  Armstrong found natural interpretations of $\area$ and $\dinv$ in terms of affine permutations for the Fuss case~\cite{armstrong2013hyperplane}, and his work was extended to the rational case by Gorsky, Mazin, and Vazirani~\cite{gorsky2016affine,gorsky2017rational}.  Gorsky and Negut formulated the rational shuffle conjecture in~\cite{gorsky2015refined}---that Hikita's polynomial was given by an operator from an elliptic Hall algebra (see also~\cite{bergeron2015compositional}).  This operator formulation leads to a $q,t$-symmetric bivariate polynomial generalizing~\Cref{eq:qtsym}:\footnote{Something is lost in the rational case: one statistic remains the degree, but the second statistic now appears only using a filtration.}
\begin{equation}\sum_{\pp \in \Park_m^n} q^{\area(\pp)} t^{\dinv(\pp)} = \sum_{\pp \in \Park_m^n} q^{\dinv(\pp)} t^{\area(\pp)}. \label{eq:parkinghilb}\end{equation}

As a combinatorial proof of $q,t$-symmetry seems out of reach even in the classical $m=n{+}1$ case, the next best thing is the analogue of the equidistribution of~\Cref{thm:haglund_loehr}.  To this end, Gorsky, Mazin, and Vazirani defined a zeta map on $\Park_m^n$ (a map taking $\area$ to $\dinv$), and conjectured that it was a bijection by providing what they believed to be an inverse map.  A curious feature of their conjectural inverse is that it appears to \emph{converge} to the correct answer.

As a corollary to our~\Cref{thm:park_char}, we prove Gorsky, Mazin, and Vazirani's conjecture and obtain a rational generalization of~\Cref{thm:haglund_loehr}.

\begin{theorem}    \label{thm:rat_sym}  For $m$ and $n$ relatively prime,
	\[\sum_{\pp \in \Park_{m}^n} q^{\area(\pp)} = \sum_{\pp \in \Park_{m}^n} q^{\dinv(\pp)}.\]
\end{theorem}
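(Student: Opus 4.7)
The plan is to bootstrap Theorem~\ref{thm:park_char} into a bijective proof by constructing the zeta map explicitly from the fixed-point data, and simultaneously writing down its inverse. Since $\gcd(m,n)=1$, Theorem~\ref{thm:park_char} gives, for each $\pp\in\Park_m^n$, a \emph{unique} fixed point $\xx^\ast(\pp)\in V^m$. The key observation I want to exploit is that $\xx^\ast(\pp)$ already encodes enough information about $\pp$ to reconstruct both $\pp$ itself and a second parking word $\zeta(\pp)$, which will be the Gorsky--Mazin--Vazirani zeta-image.

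First, I would read off $\zeta(\pp)$ from $\xx^\ast(\pp)$. Concretely, once the fixed point $\xx^\ast(\pp)$ is known, running the action of $\pp$ letter-by-letter produces an $n$-step closed orbit in $V^m$; the sequence of coordinates that are incremented at each step is $\pp$ itself, while the sequence of positions where those incremented coordinates \emph{land} after resorting yields a new word $\zeta(\pp)\in[m]^n$. A short check, which I expect to fall out of the sorted-plus-$\mathbbm{1}_m$ bookkeeping built into the definition of the action, should show that $\zeta(\pp)$ again lies in $\Park_m^n$ and, more importantly, that $\area(\pp)=\dinv(\zeta(\pp))$ (and, when one starts instead from $\zeta(\pp)$'s fixed point, that $\dinv(\pp)=\area(\zeta(\pp))$). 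Specializing the fixed point data to an all-integer orbit should recover Loehr--Warrington's sweep map on rational Dyck paths, as advertised in the abstract.

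Invertibility is where Theorem~\ref{thm:park_char} does its real work. Given $\zeta(\pp)$, I would recover $\pp$ by the symmetric procedure: $\zeta(\pp)$ is itself a parking word, so by Theorem~\ref{thm:park_char} it has its own unique fixed point $\xx^\ast(\zeta(\pp))\in V^m$, and the orbit structure that produced $\zeta(\pp)$ from $\xx^\ast(\pp)$ can be run in reverse to reconstruct $\pp$ from $\xx^\ast(\zeta(\pp))$. The uniqueness clause of Theorem~\ref{thm:park_char} is crucial here: it is what guarantees that the fixed point (and hence the reverse procedure) is unambiguous. This resolves Gorsky--Mazin--Vazirani's conjecture that their conjectural inverse really is an inverse; the ``convergence'' phenomenon they observed becomes, in the fixed-point picture, the statement that iterating the construction eventually lands at the unique fixed point of the appropriate parking word. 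Theorem~\ref{thm:rat_sym} then follows at once: $\zeta\colon\Park_m^n\to\Park_m^n$ is a bijection and intertwines $\area$ with $\dinv$.

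The main obstacle I anticipate is reconciling the fixed-point definition of $\zeta$ with the original definition of Gorsky--Mazin--Vazirani (phrased via affine symmetric group combinatorics) closely enough to transfer the statistic identity $\area(\pp)=\dinv(\zeta(\pp))$. The piecewise-linear action on $V^m$ lives naturally inside the affine permutation picture—each letter's effect (add $m$ in one coordinate, subtract $\mathbbm{1}_m$, resort) is exactly a step of an affine Weyl group translation followed by sorting—so I expect the identification to be routine once the dictionary is set up; but writing that dictionary down carefully, and checking that the fixed-point orbit computes $\dinv$ on the nose, is where the technical work will concentrate.
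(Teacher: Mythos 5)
Your overall strategy agrees with the paper's: use the uniqueness clause of Theorem~\ref{thm:park_char} to show that a combinatorially defined map taking $\area$ to $\dinv$ is a bijection, and then conclude equidistribution by reindexing the sum. But as written the proposal has a genuine gap in the middle, and it is the same gap you flag yourself as the ``main obstacle.''

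The gap is in pinning down what $\zeta$ is. You describe $\zeta(\pp)$ as the sequence of positions where the incremented coordinates \emph{land after resorting}; this is not the paper's zeta map and I do not think it reduces to it without further work. The paper instead interposes a third combinatorial object, the set $\PF_m^n$ of parking $(m,n)$-filter tuples (sequences of $(m,n)$-filters encoding labeled rational Dyck paths), and defines two maps $A,B\colon \PF_m^n\to\Park_m^n$: $A(\prk)$ records column heights of the associated path (a trivial bijection), while $B(\prk)$ records, for each step, the relative order in the current word $m(\prk^{(i-1)})$ of the level $p_i$ being removed --- i.e.\ the position of the incremented coordinate \emph{before} the step, not where it lands after. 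The statistics are then \emph{defined} through these maps, $\area(\prk)=\tfrac{(n-1)(m-1)}{2}-\sum_i A(\prk)_i$ and $\dinv(\prk)=\tfrac{(n-1)(m-1)}{2}-\sum_i B(\prk)_i$, so $\zeta:=B\circ A^{-1}$ transports one statistic to the other by construction; there is nothing to ``check.'' The only nontrivial input is Theorem~\ref{thm:b_is_bij}, that $B$ is a bijection, and that is precisely where the fixed-point theorem is used: the vector $m(\prk^{(0)})$ is a fixed point of the word $B(\prk)$, so by uniqueness of fixed points $B$ is injective, and it is a map between sets of equal cardinality, hence bijective.

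So to repair the proposal you would either have to prove that your ``landing-position'' word agrees with $B\circ A^{-1}$ (doubtful, since $B$ reads off \emph{pre-sort} positions), or, more in the spirit of the paper, phrase the construction through $\PF_m^n$ so that the statistic identity is built into the definitions and the fixed-point theorem is invoked only for the injectivity of $B$. A second, smaller point: running the orbit of $\pp$ out of $\xx^\ast(\pp)$ reconstructs $\prk=B^{-1}(\pp)$, and reading off column heights from that tuple computes $A(B^{-1}(\pp))=\zeta^{-1}(\pp)$ rather than $\zeta(\pp)$; this is harmless for the equidistribution (a bijection and its inverse give the same generating-function identity) but matters when matching the construction against Gorsky--Mazin--Vazirani's and when recovering the sweep map, which the paper obtains by restricting to the image of the canonical injection $\Dyck_m^n\hookrightarrow\PF_m^n$ rather than by an ``all-integer orbit'' specialization.
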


\subsection{Outline of the Paper}

In~\Cref{sec:new_char} we define $(m,n)$-parking words, the action of words in $[m]^n$ on $\mathbb{R}^m$, and prove our characterizations in~\Cref{thm:park_char} using the Brouwer fixed point theorem.

To relate this characterization to parking functions, we introduce some notation.  Fixing $(m,n)$ relatively prime, we define $(m,n)$-filters as certain periodic filters of $\ZZ\times \ZZ$ in~\Cref{sec:filters}, and show that equivalence classes of these filters are naturally parameterized by rational $(m,n)$-Dyck paths and balanced $(m,n)$-filters.  We  define $(m,n)$-filter tuples in~\Cref{sec:tuples} as certain sequences of $(m,n)$-filters, and relate these sequences to labeled $(m,n)$-Dyck paths.

The notion of $(m,n)$-filters allows us to give a new, remarkably simple definition of the zeta map on $(m,n)$-parking words in~\Cref{sec:combinatorial_zeta}.  We summarize past work on zeta maps in~\Cref{sec:zeta_history}, define the zeta map in~\Cref{sec:zeta}, and relate our construction to Loehr and Warrington's sweep map on $(m,n)$-Dyck paths in~\Cref{sec:sweep}.

In~\Cref{sec:affine_symmetric_and_regions}, we finally turn to the affine symmetric group.  After basic definitions in~\Cref{sec:affine_symmetric}, we use balanced $(m,n)$-filters to give a bijection between $(m,n)$-filter tuples and affine permutations whose inverses lie in the Sommers region in~\Cref{sec:sommers}.  We use this bijection in~\Cref{sec:sommers_parking_words} to relate our constructions to the work of Gorsky, Mazin, and Vazirani, showing that our~\Cref{thm:park_char} resolves~\cite[Conjecture 1.4]{gorsky2016affine}.

\section{Words and Actions}
\label{sec:words_and_actions}

\subsection{Parking Words}
\emph{Let $m$ and $n$ be positive integers, not necessarily coprime.}  As in the introduction, we define the \defn{(m,n)-parking words} $\Park_{m}^n$ to be those words $\pp = \pp_0 \cdots \pp_{n-1} \in [m]^n$ such that \begin{equation} \Big| \big\{ j : \pp_j < i \big\}\Big| \geq \frac{in}{m} \text{ for } 1 \leq i \leq m.\label{eq:parking_def2}\end{equation}

By definition, any $(m,n)$-parking word is a permutation of the column lengths of a lattice path staying above the main diagonal in an $m \times n$ rectangle, as illustrated in~\Cref{fig:dyck_paths}.  (Here, by ``column lengths,'' we mean the distances between the top of the rectangle and the horizontal steps of the lattice path.) We write $\DyckW_m^n$ for the increasing $(m,n)$-parking words---the \defn{$(m,n)$-Dyck words}---which are in bijection with the set of such lattice paths.

\subsection{Hyperplanes}
\label{sec:hyperplane_and_functions}
Although we defer most of the connections between parking words and the affine symmetric group to \Cref{sec:affine_symmetric_and_regions}, we will require the hyperplane arrangement of the affine symmetric group $\widetilde{\mathfrak{S}}_{m}$ immediately. For $0 \leq i,j < m$ and $k\in \mathbb Z$, define the hyperplane \[\mathcal{H}^k_{i,j} = \{\mathbf{x} \in \RR^m : x_i-x_j=mk\}.\]
Observe that $\mathcal H^k_{i,j}=H^{-k}_{j-i}$. We define the height of $\mathcal H^k_{i,j}$ to be $j-i+mk$, where we assume that $k$ is positive or $k=0$ and $j>i$.
It follows that the \defn{affine simple hyperplanes} $\left\{\mathcal{H}^0_{i,i+1}\right\}_{0\leq i < m-1} \cup \left\{\mathcal{H}^1_{m-1,0}\right\}$ each have height one.  We call the set $\left\{\mathcal{H}^0_{i,i+1}\right\}_{0\leq i < m-1}$ the \defn{simple hyperplanes}.  Write \[\mathcal{H} =  \bigcup_{\substack{0 \leq i < j < m\\ k \in \mathbb{Z}}} \mathcal{H}^k_{i,j}\] for the affine hyperplane arrangement of type $\widetilde{\mathfrak{S}}_{m}$ and let \[\mathbb{R}^{m}_t = \left\{\xx \in \mathbb{R}^m : \sum_{i=0}^{m-1} x_i = t \right\} \cong \RR^{m-1}.\]   The closure of each connected region of $\RR^m_t \setminus \mathcal{H}$ is called an \defn{alcove}.  For $0 \leq i < m$, write $\mathbf{e}_i$ for the $i$th standard basis vector of $\RR^m$ and $\mathbbm{1}_m=\sum_{i=0}^{m-1} \mathbf{e}_i$.  The set of alcoves in $\RR^m_t$ is permuted under translations by $m\mathbf{e}_i-\mathbbm{1}_m$ and under reflections in any hyperplane $\mathcal{H}_{i,j}^k$.   There is an alcove-preserving bijection between $\RR_{t_1}^m$ and $\RR_{t_2}^m$, defined by the addition of the multiple $(t_2-t_1)\mathbbm{1}_m$; we call this \defn{rebalancing}.

We will need a metric on $\mathbb R^m_t$. This metric is a constant multiple of the usual Euclidian metric, but it will be convenient for us to describe it in a different way. To begin with, define:
\begin{align*} \mathcal N(\xx)&:={\sum_{0\leq i<j< m} (x_j-x_i)^2}\\
|\xx|&:=\xx \cdot \xx^\intercal=\sum_{i=0}^{m-1}x_i^2\end{align*}

Observe that $\mathcal N(\xx)=\mathcal N(\xx-t\mathbbm{1}_m)$ for any $t\in \mathbb R$. Thus, to understand the behaviour of $\mathcal N(\xx)$, it suffices to assume $\xx\in\mathbb R^m_0$.  Define a matrix $$N=(n_{ij})_{0\leq i,j<m} \textrm{ with } n_{ij}=\begin{cases} (m -1) & \text{if } i=j \\ -1 & \text{otherwise}\end{cases}$$ and write $\mathbbm{1}_{m \times m}$ for the $m\times m$ matrix containing all ones.  Then, for $\xx\in \mathbb R^m_0$, we can write \[\mathcal{N}(\xx) = \xx \cdot N \cdot \xx^\intercal= \xx \cdot \left(N +\mathbbm{1}_{m\times m}\right) \cdot \xx^\intercal = m (\xx \cdot \xx^\intercal) = m |\xx|.\]

Now, for $\xx,\yy\in\mathbb R^m_t$ set $d(\xx,\yy)=\mathcal N(\xx-\yy)^{1/2}$. Since $\xx-\yy\in \mathbb R^m_0$, we have that $\mathcal N(\xx-\yy)=m|\xx-\yy|$. It follows that $d(\xx,\zz)\leq d(\xx,\yy)+d(\yy,\zz)$, with equality only if $\yy$ is on the line segment between $\xx$ and $\zz$. (We refer to this statement, including the conditions for equality, as the ``strong triangle inequality.'')  
Since $d$ is a multiple of the usual Euclidean metric, the metric topology defined by $d$ is the same as the usual (metric) topology on
$\mathbb R^m_t$.


\begin{definition}
A fundamental domain for the natural action of $\mathfrak{S}_m$ on $\mathbb{R}^m$ is given by those points whose coordinates weakly increase.  Define the cone 
 \[V^m_t := \Big\{ \xx \in \mathbb{R}^m_t : x_0\leq x_1 \leq \cdots \leq x_{m-1} \Big\}.\]   We may rebalance an element of $V^m_{t_1}$ to an element of $V^m_{t_2}$ by adding the appropriate multiple of $\mathbbm{1}_m$.
\label{def:space_and_norm}
\end{definition}

\subsection{Actions of Words}
For each $i \in [m]$, we define piecewise linear transformations on $\RR^m_t\setminus \mathcal H$ and on $V^m_t$.

\begin{definition}\label{def:action}
A letter $i \in [m]$ acts on $\mathbf{x} \in \RR^m_t \setminus \mathcal{H}$ by adding $m$ to the $i$th smallest coordinate of $\mathbf{x}$ and subtracting the tuple $\mathbbm{1}_m$.  (This definition is unambiguous because we exclude the points of $\mathcal H$, which are exactly the points where there are some equal coordinates. The coordinates of a point $\xx\in\RR^m_t\setminus\mathcal H$ are all distinct, so it makes sense to speak of its $i$-th smallest coordinate.) The letter $i$ acts on $\mathbf{x} \in  V^m_t$ in the same way, but with a final resorting step at the end.  A word $\w \in [m]^n$ acts on $\mathbf{x} \in \RR^m_t \setminus \mathcal{H}$ or $\mathbf{x} \in V^m_t$  by acting by its letters from left to right.

More explicitly, writing $\mathbf{y}:=\mathrm{sort}(\mathbf{x})$ for the increasing rearrangement of a point $\mathbf{x} \in \RR^m_t$, define
 \begin{align}
 \label{eq:action_letter}
 \begin{split}
 i(\mathbf{x}) &:= \mathbf{x}+m\mathbf{e}_j-\mathbbm{1}_m \text{ for } \mathbf{x} \in \RR^m_t \setminus \mathcal{H} \text{ if } x_j={y}_i, \text{ and }\\
 i(\mathbf{x}) &:= \mathrm{sort}(\mathbf{x}+m\mathbf{e}_i-\mathbbm{1}_m) \text{ for } \mathbf{x} \in V^m_t.
 \end{split}
 \end{align}
\end{definition}

 An example of the action on $\RR^3_6 \setminus \mathcal{H}$ is given in~\Cref{fig:parking_on_perm_classes}.\footnote{To cleanly bridge from this section to the affine symmetric group in~\Cref{sec:affine_symmetric_and_regions}, we will want to normalize points so that $\sum_{i=0}^{m-1} x_i = \binom{m+1}{2}$.  We therefore use that normalization in~\Cref{fig:parking_on_perm_classes}.}

\begin{figure}[htb]
\includegraphics[width=.8\textwidth]{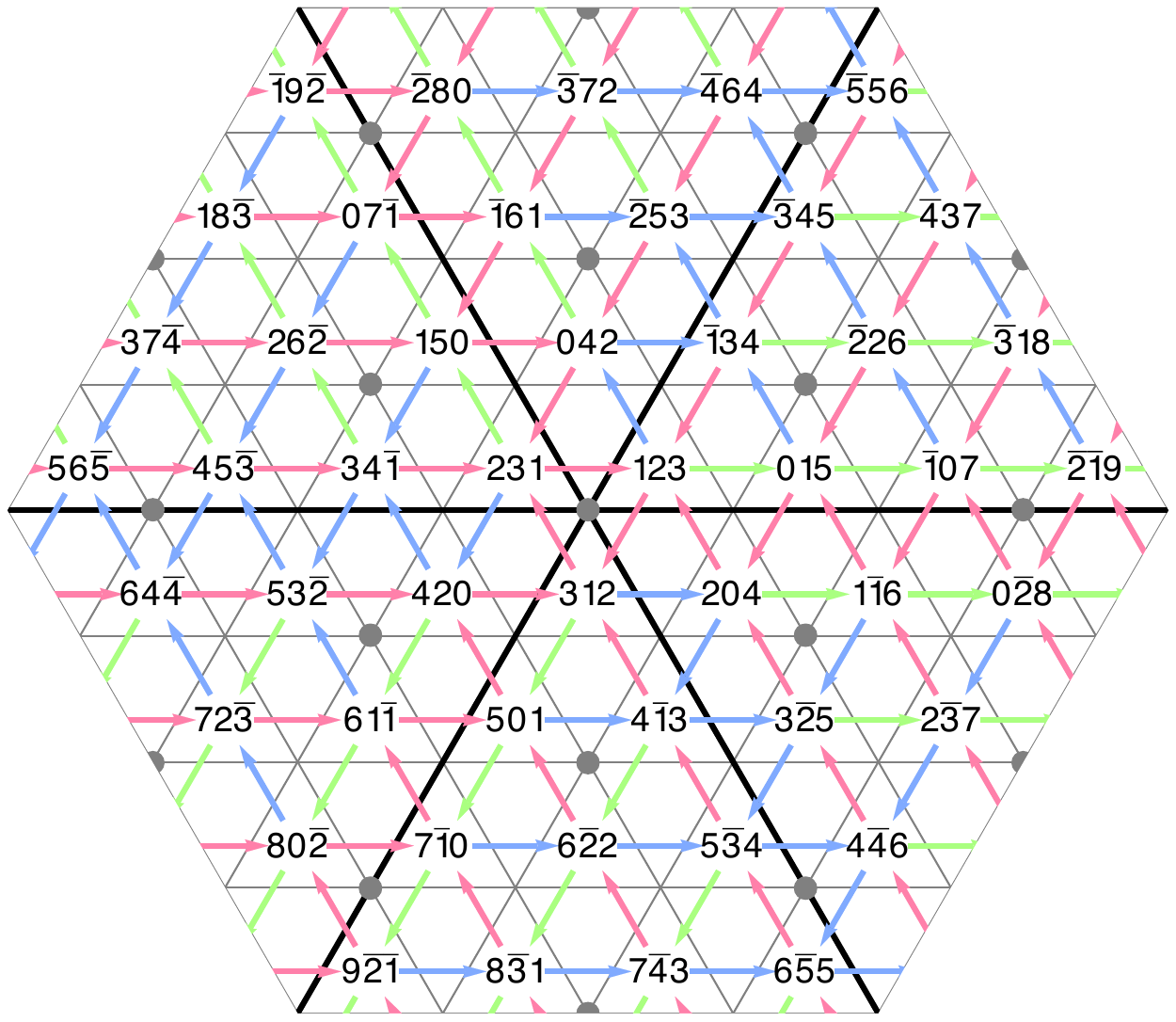}
\caption{An orbit of the action of the letters $\textcolor{red}{0}$, $\textcolor{blue}{1}$, and $\textcolor{green}{2}$ on $\RR^3_6 \setminus \mathcal{H}$. Acting by $\textcolor{red}{0}$ adds 3 to the smallest coordinate and subtracts $\mathbbm{1}_3$; acting by $\textcolor{green}{2}$ adds 3 to the largest coordinate and subtracts $\mathbbm{1}_3$; and acting by $\textcolor{blue}{1}$ adds 3 to the coordinate that is neither largest nor smallest and subtracts $\mathbbm{1}_3$.  For formatting, we have written $\overline{i}$ for $-i$ and suppressed commas and parentheses; thus, the string $\overline{2}53$ stands for the point with coordinates $(-2,5,3)$.}
\label{fig:parking_on_perm_classes}
\end{figure}

The action of a letter $i \in [m]$ on $V_t^m$ is the restriction to $V_t^m$ of a piecewise-linear function from $\RR^m_t \cong \RR^{m-1}$ to $V_t^m$ that sends alcoves to alcoves.
 By~\Cref{eq:action_letter}, the letter $i$ acts on $\xx \in \RR^m_t$ by the translation $\xx + m\mathbf{e}_i - \mathbbm{1}_m$, and the final resorting of the coordinates into increasing order may be interpreted geometrically by folding once along the simple hyperplane $\mathcal{H}^0_{i,i+1}$, and then folding again as needed along simple hyperplanes until all points lie in the cone $V_t^m$.
 
\begin{lemma}
\label{rem:action}
The action of a word $\w \in [m]^n$ on $V_t^m$ sends alcoves to alcoves and only decreases distances between points: $d(\mathbf{x},\mathbf{y})\geq d(\w(\mathbf{x}),\w(\mathbf{y}))$. In particular, $\w$ defines a continuous
map from $V_t^m$ to $V_t^m$.
\end{lemma}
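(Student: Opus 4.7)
The plan is to factor the action of a single letter $i \in [m]$ on $V^m$ into elementary pieces that individually (i) send alcoves to alcoves and (ii) are distance-non-increasing, and then to compose these over the letters of $\w$. The factorization is the one recorded in the paragraph preceding the statement: first apply the translation $\tau_i \colon \xx \mapsto \xx + m\mathbf{e}_i - \mathbbm{1}_m$ on $\RR^m_t$, and then carry out the resorting as a finite sequence of ``folds'' across simple hyperplanes $\mathcal{H}^0_{j,j+1}$, where a fold across a hyperplane $H$ is by definition the piecewise-linear map equal to the identity on the half-space of $\RR^m_t$ agreeing with $V^m_t$ and to the orthogonal reflection $\sigma_H$ on the complementary half-space.

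For $\tau_i$, I would use the identity $m\mathbf{e}_i - \mathbbm{1}_m = \sum_{j \ne i}(\mathbf{e}_i - \mathbf{e}_j)$ to exhibit the translation vector as an element of the coroot lattice of type $A_{m-1}$; hence $\tau_i$ lies in $\widetilde{\Sym}_m$, permutes alcoves, and (as a translation) is an isometry. For a fold across $H$, both the identity and $\sigma_H$ lie in $\widetilde{\Sym}_m$ and send alcoves to alcoves, so the fold does as well. For the distance estimate I would decompose $\xx - \yy = (d(\xx) - d(\yy))\mathbf{n} + \mathbf{v}$ with $\mathbf{n}$ a unit normal to $H$, $d(\cdot)$ the signed distance to $H$, and $\mathbf{v}$ tangent to $H$: when $\xx,\yy$ lie on the same side of $H$ the fold preserves this difference, while on opposite sides it produces $(d(\xx) + d(\yy))\mathbf{n} + \mathbf{v}$, and since $d(\xx)$ and $d(\yy)$ then have opposite signs $|d(\xx) + d(\yy)| \le |d(\xx) - d(\yy)|$, forcing the Euclidean distance not to grow.

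To convert this Euclidean estimate to the paper's norm, I would invoke the identity $\sum_{i<j}(x_j - x_i)^2 = m\sum_i x_i^2 - (\sum_i x_i)^2$, giving $|\xx|^2 = m\,\|\xx\|_2^2$ on $\RR^m_0$; since the difference $\xx - \yy$ of two points of $V^m_t$ always lies in $\RR^m_0$, the Euclidean non-expansiveness transfers directly to $|\cdot|$. The one step that demands care is verifying that the resorting is truly a composition of folds across \emph{simple} hyperplanes, but this follows from the fact that $\Sym_m$ is generated by adjacent transpositions: any out-of-order adjacent pair in $\tau_i(\xx)$ can be resolved by the corresponding simple fold, and the process terminates once the vector lies in $V^m_t$. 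Composing the alcove-preserving, non-expansive pieces over the steps of a single letter action, and then over the letters of $\w$ from left to right, would then deliver the lemma.
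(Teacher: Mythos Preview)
Your proof is correct and is precisely the paper's approach spelled out in detail: the paper's own proof is the single sentence ``evident from the geometric description of the action of a letter,'' referring to exactly the translation-then-fold decomposition in the preceding paragraph, which you have made fully explicit. Two minor remarks: the identity should read $|\xx| = m\,\|\xx\|_2^2$ on $\RR^m_0$ (not $|\xx|^2$, since the paper's $|\cdot|$ is already quadratic), and for the distance estimate you should observe that the fold sequence can be chosen to depend only on the letter $i$ (namely $F_{i,i+1},F_{i+1,i+2},\dots,F_{m-2,m-1}$, since after $\tau_i$ only the $i$th coordinate can be out of order), so that the \emph{same} piecewise-linear map is applied to both $\xx$ and $\yy$.
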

\begin{proof}

  The lemma follows from the geometric description given above.  The action of $\w$ is the composition of a translation with a sequence of reflections; each of these operation sends alcoves to alcoves, so the same is true of the action of $\w$.

To see that the action of $\w$ reduces relative distances with respect to the metric $d$,
observe first that the initial 
  translation step does not change relative distances. The subsequent folding steps apply either the identity map or a reflection. Reflections and the identity map each individually preserve distances with respect to $d$, and so each 
  folding step can only reduce distances between points with respect to
  $d$, by the triangle inequality. 
  Since $\w$ does not increase distances with respect to $d$ (which is a constant multiple of the usual Euclidian distance), it is continuous.
%
  \end{proof}
  

\subsection{Affine Dimension}

  We say that a subset $X \subseteq V_t^m$ is \defn{of affine dimension $k$} if it is a convex set contained in an affine subspace of dimension $k$ and contains an  open ball in that affine subspace.  In particular, $X$ is of affine dimension $0$ if it consists of a single fixed point.   Note that affine dimension is not defined for an arbitrary subset of $V_t^m$; to say that a subset is of affine dimension $k$ is to make a strong statement about the kind of subset it is.  

For $\w \in [m]^n$, define \[\Fix(\w):=\big\{\xx \in V_t^m : \w(\xx)=\xx\big\}\] for the set of points fixed under $\w$.  Choose $\pp \in \Park_m^n$ and write $d:=\gcd(m,n)$.  We will prove~\Cref{thm:park_char} in~\Cref{sec:new_char} by showing that $\Fix(\pp)$ is of affine dimension $d-1$.  For now, we prove that $\Fix(\pp)$ is convex and is contained in an affine subspace of dimension $d-1$.  

\begin{lemma}
$\Fix(\pp)$ is convex.
\label{lem:convex}
\end{lemma}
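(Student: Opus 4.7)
The plan is to prove convexity by exploiting the distance non-increasing property of $\pp$ recorded in \Cref{rem:action}, combined with the equality case of the triangle inequality. Since the norm $|\cdot|$ of \Cref{def:space_and_norm} is (up to a constant factor) the squared Euclidean distance on $V^m \subset \RR^m_0$, the inequality $|\xx-\yy| \geq |\pp(\xx)-\pp(\yy)|$ from \Cref{rem:action} is equivalent to saying that $\pp \colon V^m \to V^m$ is $1$-Lipschitz in the Euclidean metric, which is what we will use.

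Fix $\xx, \yy \in \Fix(\pp)$ and let $\zz = (1-t)\xx + t\yy$ for some $t \in [0,1]$. First I would apply the non-expansive property of $\pp$ to the pairs $(\xx,\zz)$ and $(\yy,\zz)$, and use the fixed point assumption $\pp(\xx)=\xx$, $\pp(\yy)=\yy$ to obtain
\[
\|\xx - \pp(\zz)\| \leq \|\xx - \zz\|, \qquad \|\yy - \pp(\zz)\| \leq \|\yy - \zz\|.
\]
Adding these two inequalities and using that $\zz$ lies on the segment $[\xx,\yy]$, so $\|\xx-\zz\|+\|\zz-\yy\| = \|\xx-\yy\|$, gives
\[
\|\xx - \pp(\zz)\| + \|\pp(\zz) - \yy\| \leq \|\xx-\yy\|.
\]
The triangle inequality forces the reverse inequality, so both sides are equal.

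Next I would extract the geometric conclusion from equality. Equality in the triangle inequality $\|\xx-\pp(\zz)\|+\|\pp(\zz)-\yy\|=\|\xx-\yy\|$ forces $\pp(\zz)$ to lie on the segment $[\xx,\yy]$. Equality in the sum of the two Lipschitz estimates (necessary because their sum equals $\|\xx-\yy\|=\|\xx-\zz\|+\|\zz-\yy\|$) forces each individually to be an equality, so in particular $\|\xx-\pp(\zz)\| = \|\xx-\zz\|$. But $\zz$ is the unique point of the segment $[\xx,\yy]$ at distance $\|\xx-\zz\|$ from $\xx$, whence $\pp(\zz) = \zz$. Hence $\zz \in \Fix(\pp)$, proving that $\Fix(\pp)$ is convex.

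There is no real obstacle here beyond being careful about the convention for $|\cdot|$; the argument is a soft consequence of two facts that are already in hand, namely the convex geometry of $V^m$ and the non-expansiveness of the action from \Cref{rem:action}. The more delicate step (which will be handled elsewhere) is controlling the affine dimension of $\Fix(\pp)$ — the present lemma only pins down that it is a convex subset of the affine subspace of $V^m$ on which the full piecewise-linear map $\pp$ agrees with a single affine map.
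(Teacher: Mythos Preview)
Your proof is correct and follows essentially the same approach as the paper: the paper's proof is a one-sentence appeal to the fact that $\pp$ only decreases distances (\Cref{rem:action}), and you have simply spelled out the standard argument behind that appeal via the equality case of the triangle inequality. Your observation that $|\xx| = m\|\xx\|^2$ on $\RR^m_0$ (so that the non-expansiveness in $|\cdot|$ is equivalent to $1$-Lipschitz in the Euclidean norm) is the only point requiring care, and you handled it correctly.
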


\begin{proof}
Let $\pp \in \Park_m^n$ and suppose $\xx,\mathbf{y} \in \Fix(\pp)$. Since the application of $\pp$ only decreases distances, the fact that the strong triangle inequality implies that the line between them is fixed. 
\end{proof}

\begin{lemma}
Let $\gcd(m,n)=d$, $\pp \in \Park_m^n$, and suppose $\xx \in \Fix(\pp)$.
Then the multiset of coordinates $\{x_i\}_{i=0}^{m-1}$ can be partitioned into disjoint multisets, each of which is of size $m/d$ and of the form $\{a+kd+b_km\}_{k=0}^{m/d-1}$.
\label{lem:ad}
\end{lemma}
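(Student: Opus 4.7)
The plan is to pass to the multiset of coordinates modulo $m$ and analyze it inside the circle $\RR/m\ZZ$, then read off the claimed partition by identifying the orbits of a cyclic translation action. The crucial first observation is that each letter $i \in [m]$ shifts every coordinate of $\xx \in V^m$ by an integer: the $i$th smallest coordinate is increased by $m-1$ and every other coordinate is decreased by $1$, while the resorting step merely permutes slots without altering the underlying multiset. Reducing modulo $m$, every coordinate is therefore decreased by exactly $1$ per letter, independent of which letter is applied. Writing $M$ for the multiset of residues $\{x_i \bmod m\}$ inside $\RR/m\ZZ$, each letter translates $M$ by $-1$, and hence the full action of $\pp$ translates $M$ by $-n$.

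Next, I would invoke the fixed-point hypothesis $\pp(\xx) = \xx$: since $\pp$ preserves the multiset of coordinates exactly, $M$ must be invariant under the translation $a \mapsto a - n$ on $\RR/m\ZZ$. Because $n$ is an integer, the cyclic subgroup $H := \langle n \rangle \leq \RR/m\ZZ$ is contained in $\ZZ/m\ZZ$, coincides with the subgroup generated by $d$, and has order $m/d$. Its orbits on $\RR/m\ZZ$ are therefore the cosets $a + \{0, d, 2d, \ldots, (m/d-1)d\} \pmod{m}$, each of size $m/d$. The $H$-invariance of $M$ forces the multiplicity function of $M$ to be constant along each $H$-orbit, so $M$ decomposes as a disjoint union (with multiplicity) of exactly $d$ such orbits.

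Finally, lifting each orbit from $\RR/m\ZZ$ back to $\RR$, the corresponding sub-multiset of $\{x_i\}_{i=0}^{m-1}$ takes the form $\{a + kd + b_k m\}_{k=0}^{m/d - 1}$ for some $a \in \RR$ and integers $b_k$, which is exactly the form claimed in the lemma. I do not expect a serious obstacle in this argument; the only subtle point is verifying that the reduction modulo $m$ is well-defined when the coordinates are not integers, which follows immediately from the fact that every letter acts by integer translations on each coordinate, so the multiset of fractional parts (and more generally residues in $\RR/c\ZZ$ for any positive integer $c$) is preserved throughout the action.
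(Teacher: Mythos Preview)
Your argument is correct and essentially identical to the paper's: both reduce the coordinate multiset modulo $m$, use the fixed-point condition to deduce invariance under translation by $n$ (the paper phrases this by omitting the rebalancing so each letter adds $m$ to one coordinate, while you keep the rebalancing and see a uniform shift by $-1$ per letter), and then pass to invariance under $d=\gcd(m,n)$ to obtain the orbit decomposition. Your write-up spells out the orbit structure and the non-integer-coordinate subtlety more explicitly than the paper's terse proof, but the approach is the same.
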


\begin{proof}
Up to the rebalancing by subtraction of a multiple of $\mathbbm{1}_m$, the action of each letter of $\pp$ increases one coordinate of $\xx$ by $m$, but the effect of the entire parking word is to send $x_i$ to $x_i + n$.  Since each individual entry changes by a multiple of $m$, it does not change modulo $m$. This means that the multiset of remainders of $x_i \mod m$ must be fixed under addition of $n$, so this multiset must also be fixed under addition of $\gcd(n,m)=d$.
\end{proof}

\begin{example}\label{ex:partitioning}
Fix $(m,n) = (6,9)$ with $d = \gcd(m,n) = 3$, and consider the $(m,n)$-parking word $\pp = 020101151$.  We verify that $\pp$ has a fixed point $\xx=(-3,1,2,4,6,11) \in V_{21}^6$ (up to rebalancing by subtraction of a multiple of $\mathbbm{1}_m$):
\begin{align*}
(\textcolor{red}{-3},\textcolor{blue}{1},2,\textcolor{blue}{4},\textcolor{red}{6},11) &\xmapsto{0} (\textcolor{blue}{1},2,\textcolor{red}{3},\textcolor{blue}{4},\textcolor{red}{6},11)  \xmapsto{2}(\textcolor{blue}{1},2,\textcolor{blue}{4},\textcolor{red}{6},\textcolor{red}{9},11) \xmapsto{0}(2,\textcolor{blue}{4},\textcolor{red}{6},\textcolor{blue}{7},\textcolor{red}{9},11) \xmapsto{1} \\ &\xmapsto{1} (2,\textcolor{red}{6},\textcolor{blue}{7},\textcolor{red}{9},\textcolor{blue}{10},11) \xmapsto{0} (\textcolor{red}{6},\textcolor{blue}{7},8,\textcolor{red}{9},\textcolor{blue}{10},11) \xmapsto{1} (\textcolor{red}{6},8,\textcolor{red}{9},\textcolor{blue}{10},11,\textcolor{blue}{13}) \xmapsto{1} \\ &\xmapsto{1} (\textcolor{red}{6},\textcolor{red}{9},\textcolor{blue}{10},11,\textcolor{blue}{13},14)  \xmapsto{5} (\textcolor{red}{6},\textcolor{red}{9},\textcolor{blue}{10},11,\textcolor{blue}{13},20) \xmapsto{1} (\textcolor{red}{6},\textcolor{blue}{10},11,\textcolor{blue}{13},\textcolor{red}{15},20).\end{align*}
Then $\xx$ is a fixed point of $\pp$, since rebalancing gives \[\pp(\xx)=\pp(-3,1,2,4,6,11) = (6,10,11,13,15,20)-9\cdot \mathbbm{1} = (-3,1,2,4,6,11) = \xx.\] 
The partition guaranteed by~\Cref{lem:ad} is $\textcolor{red}{\{-3,6\}},\textcolor{blue}{\{1,4\}},\{2,11\}$.

\end{example}

\begin{lemma}
\label{lem:contained}
Let $\gcd(m,n)=d\geq 1$ and $\pp\in \Park_m^n$.  Then $\Fix(\pp)$ is contained in an affine subspace of dimension $d-1$.
\end{lemma}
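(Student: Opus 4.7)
The plan is to combine the multiset structure from Lemma \ref{lem:ad} with the piecewise-affine nature of the action (Lemma \ref{rem:action}) and convexity of $\Fix(\pp)$ (Lemma \ref{lem:convex}) to bound the dimension locally inside one alcove, then globalize via convexity.

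First, I would set up the local structure. Within any alcove $W$, each letter of $\pp$ acts on $V^m$ by a translation followed by a fixed sorting permutation, so the full word $\pp$ restricts to an affine map of $W$. Hence $\Fix_W(\pp) := \Fix(\pp) \cap W$ is the intersection of $W$ with an affine subspace $A_W \subseteq \RR^m_0$. I would choose $\xx^0 \in \Fix(\pp)$ lying in the interior of some alcove $W$; such a point exists because alcove boundaries form a locally finite union of hyperplanes, which cannot contain a convex set of positive dimension. (If $\Fix(\pp)$ is empty or a single point, then $\dim \Fix(\pp) \leq 0 \leq d-1$ already.)

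Next, I would bound $\dim A_W$ using Lemma \ref{lem:ad}. For $\xx$ in a small neighborhood of $\xx^0$ inside $A_W$, Lemma \ref{lem:ad} produces a decomposition of the multiset of coordinates of $\xx$ into $d$ multisets, each of the form $\{a_j(\xx) + kd + b_{j,k}(\xx)\, m\}_{k=0}^{m/d-1}$ for $j=1,\dots,d$. By continuity the integer parameters $b_{j,k}(\xx)$ and the underlying partition of $\{0,\dots,m-1\}$ into groups must agree with those of $\xx^0$, so $\xx$ is determined by the real tuple $(a_1,\dots,a_d) \in \RR^d$. The constraint $\sum_i x_i = 0$ translates into a single linear equation on the $a_j$'s, so $\dim A_W \leq d-1$. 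To globalize, I would use Lemma \ref{lem:convex}: for any $\yy \in \Fix(\pp)$ the segment from $\xx^0$ to $\yy$ lies in $\Fix(\pp)$, and since $\xx^0$ is interior to $W$, a small initial portion of this segment remains in $W$ and therefore in $A_W$. Hence $\yy - \xx^0$ lies in the linear part of $A_W$, so $\Fix(\pp) \subseteq \xx^0 + (A_W - \xx^0)$, an affine subspace of dimension at most $d-1$.

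The main obstacle is the local-to-global step: without convexity, $\Fix(\pp)$ could a priori be a union of distinct $(d-1)$-dimensional pieces (one per alcove containing fixed points) whose common affine hull is larger, and Lemma \ref{lem:convex} is exactly what rules this out. A secondary technical point is justifying that the integer parameters and partition from Lemma \ref{lem:ad} are locally constant on $A_W$; this continuity argument is cleanest at $\xx^0$ where the values $a_j^0 \bmod d$ are distinct, but regardless of genericity the resulting count of free real parameters is bounded by $d$.
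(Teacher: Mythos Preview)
Your core argument---use Lemma~\ref{lem:ad} to see that nearby fixed points are determined by $d$ real offsets $(a_1,\dots,a_d)$ summing to a constant, then globalize via convexity (Lemma~\ref{lem:convex})---matches the paper's approach. But your detour through the piecewise-affine alcove structure introduces a gap: the assertion that a locally finite union of hyperplanes ``cannot contain a convex set of positive dimension'' is false. A line segment lying inside a single wall $\mathcal{H}_{i,j}^k$ is precisely such a convex set, so nothing you have written rules out $\Fix(\pp)$ being confined to alcove boundaries, and hence the existence of your $\xx^0$ in an alcove interior is unjustified. (What \emph{is} true is that a convex set contained in a finite union of hyperplanes must lie in a single one of them; iterating this would let you replace your open alcove $W$ by the smallest face of the arrangement whose relative interior meets $\Fix(\pp)$, on which the action is still affine---but you have not done this.)

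The paper sidesteps the issue entirely by never invoking alcoves. It takes an arbitrary $\xx\in\Fix(\pp)$ and any nearby $\yy\in\Fix(\pp)$, applies Lemma~\ref{lem:ad} to $\yy$, and observes that for $\yy$ close enough to $\xx$ the induced partition and integer parameters agree with those of $\xx$; thus $\yy-\xx$ has at most $d$ distinct coordinate values (one offset per block) summing to zero, placing $\yy$ in a fixed $(d{-}1)$-dimensional affine subspace through $\xx$. A short convexity argument then shows this subspace is independent of the choice of nearby $\yy$ and contains all of $\Fix(\pp)$. The simplest repair to your proof is to drop the alcove step and argue directly from Lemma~\ref{lem:ad} in this way; the piecewise-affine structure buys you nothing here.
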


\begin{proof}
Let $\xx\in\Fix(\pp)$, and let $\yy\in\Fix(\pp)$ be another fixed point in a small ball around $\xx$.  By \Cref{lem:ad}, the coordinates of $\yy$ can be partitioned into $d$ disjoint multisets of size $m/d$, each of which consists of a set of residues mod $m$ which are fixed under addition of $d$.  Because $\yy$ is close to $\xx$, the partition we have found for the coordinates of $\yy$ also works for the coordinates of $\xx$. For each of the parts in the partition, there is therefore some offset such that adding this offset to the coordinates of $\xx$ in that part, yields the coordinates of $\yy$ in that part.  These offsets must add up to zero, since the sum of the entries of $\xx$ and $\yy$ are assumed to be the same. Therefore, $\yy$ lies in an affine subspace of dimension $d-1$ which also contains $\xx$. 

In principle, if we chose a different $\yy'\in\Fix(\pp)$ near $\xx$, we could obtain a different affine subspace (corresponding to a different way of partitioning the coordinates of $\xx$).  However, convexity would then imply that the line between $\yy$ and $\yy'$ is also in $\Fix(\pp)$, and this includes points which are not on any affine subspace of the above form, which is impossible.  Thus all the points in $\Fix(\pp)$ near $\xx$ lie in a single affine subspace.    By convexity, any point in $\Fix(\pp)$ lies in the same affine subspace.  
\end{proof}

\section{A New Characterization of Parking Words}
\label{sec:new_char}
In this section we prove~\Cref{thm:park_char}, distinguishing parking words in the set of all words in $[m]^n$ using the action of a word on $V_t^m$.  

\begin{definition}\label{def:touch} For $\pp \in \Park_m^n$ and $1 \leq i < m$, define $i$ to be a \defn{touch point} of $\pp$ if \[\Big|\big\{j : \pp_j<i\big\}\Big|=i\frac{n}{m}.\]\end{definition}

Note that we do not count $0$ or $m$ as touch points, so that when $n$ and $m$ are coprime, no parking word has a touch point.

We break the proof of~\Cref{thm:park_char} into five parts.  Let $\pp \in \Park_m^n$ and $d:=\gcd(m,n)$:
\begin{itemize}
    \item \Cref{lem:rp_fixed}: if $\pp$ has no touch points, then it has a fixed point.
    \item \Cref{lem:rp_unique}: if $d=1$, then $\pp$ has a unique fixed point.
    \item \Cref{lem:nrp_fixed0}: if $\pp$ has no touch points, then its fixed point space is bounded of affine dimension $d-1$.

    \item \Cref{lem:nrp_fixed}: if $\pp$ has a touch point, then it has infinitely many fixed points, on which $\mathcal N$ is arbitrarily large.
    \item \Cref{lem:no_fixed}: if $\w \in [m]^n \setminus \Park_m^n$, then it has no fixed points.
\end{itemize}

\subsection{Parking words without touch points}

We begin by recalling the Brouwer fixed point theorem.

\begin{theorem}[{Brouwer~\cite[Theorem 1.2]{todd1976fixedpoint}}]
Any continuous function from a closed topological ball to itself has a fixed point.
\end{theorem}

\begin{lemma}\label{lem:rp_fixed}
Let $\pp \in \Park_m^n$ have no touch point.  Then $\pp$ has a fixed point in $V_t^m$.
\end{lemma}

\begin{proof}  We argue for $t=0$, since the the statement for general $t$ follows by rebalancing.
We want to show that $\mathcal N(\pp(\xx))<\mathcal N(\xx)$ for $\mathcal N(\xx)>N$, provided $N$ is sufficiently large (that is, parking contracts).  By~\Cref{rem:action}, the Brouwer fixed point theorem can then be invoked on the $(m-1)$-ball \[\left\{\xx \in V^m_0 : \mathcal N(\xx)\leq N\right\}\] to guarantee a fixed point.

\medskip

We first consider the case that the $x_i$ are sufficiently separated that each ``resort'' step does nothing---that is, the actions of $\pp$ on $\xx$ as an element of $\RR^m_0 \setminus \mathcal{H}$ and as an element of $V_0^m$ coincide.  For $0\leq i <m$, let \[q_{i} = \Big| \big\{ j : \pp_j = i\big\} \Big|\] be the number of occurrences of $i$ in the $(m,n)$-parking word $\pp$.  Now, for $\xx \in V_0^m$,

\begin{multline} \mathcal N(\pp(\xx))-\mathcal N(\xx)\\
=\sum_{0 \leq i<j< m} \left[ (x_i+mq_i-x_j-mq_j)^2 - (x_i-x_j)^2 \right] \\
=\sum_{0 \leq i<j<m}m^2(q_i-q_j)^2 + \sum_{i=0}^{m-1} 2m^2 q_i x_i -2\left(\sum_{i=0}^{m-1} x_i\right)\left( \sum_{i=0}^{m-1} q_i\right).\label{eq:pdiff}\end{multline}

Of the three terms on the right-hand side of~\Cref{eq:pdiff}, the first
sum depends only on $\pp$.  The third vanishes because we have assumed that $\sum_i x_i =0$.  We want to show that the second sum on the right-hand side \[\sum_{i=0}^{m-1} 2m^2 q_i x_i\] is sufficiently negative to
dominate the first, provided $\mathcal N(\xx)$ is big enough.  We will begin by establishing that the second sum is negative, by showing that we can add a sequence of positive numbers to it
to make it zero.

More precisely, we will do the following. For $0\leq i \leq m-1$, initialize
the variable $y_i=x_i$. We will now carry out a process where we gradually change the value $\yy$, so that, at each step $\sum_i q_iy_i$ increases, and so that, at the end, $y_i=0$ for all $i$. Since
$\sum_i q_i \cdot 0=0$, this will show that the initial value, $\sum_i q_ix_i$, was
negative. 

Suppose that $y_0=y_1=\dots=y_{b-1}$ (with $b$ maximal) and
$y_{m-1}=y_{m-2}=\dots=y_{m-c}$ (with $c$ maximal).  (At the beginning of the process, when $y_i=x_i$ for all $i$, $b$ and $c$ will both be 1, but as we continue, this will change.) If we increase each of the $b$
minimal coordinates of $\yy$ by $c\alpha$ and decrease each of the $c$ maximal coordinates of $\yy$ by $b\alpha$, we
have not changed the average value of $\yy$.  On the other hand, the value of $\sum_i q_i y_i$
changes by

\begin{equation}\label{eq:pdiff2}\sum_{i=0}^{b-1} \alpha c q_i -  \sum_{i=m-c}^{m-1} \alpha b q_i.\end{equation}

By~\Cref{eq:parking_def2}---because $\pp$ is an $(m,n)$-parking word---the first term of~\Cref{eq:pdiff2} is greater than $\alpha cb\frac{n}{m}$, while the second term is less
than $\alpha cb\frac{n}{m}$.  Thus, changing the values of $\yy$ in this way
increases the sum $\sum_i q_i y_i$.

Choose $\alpha$ maximal so that,
in increasing the $b$ minimal coordinates and decreasing the $c$ maximal coordinates, none of the values changed pass any other values of $\yy$.  We call this a \defn{step}.  Since at least one of $b$ or $c$ increases, after a finite number of steps, we will have all entries of $\yy$ at zero and the value of the sum
$\sum_i q_i y_i$ will also be zero.  But since we increased the value of the sum at each step, its
initial value was negative.

In fact, we can bound the value of $\sum_i q_i x_i$ away from 0 by approximating the change of $\sum_i q_iy_i$ across all steps.  For $b,c<m$, we have \begin{align*}\left(\sum_{i=0}^{i=b-1} q_i\right) - \frac{bn}{m} &\geq \frac{1}{m} \hspace{2em} \text{ and} \\ \frac{cn}{m} - \left(\sum_{i=m-c}^{i=m-1}q_i\right) &\geq \frac{1}{m},\end{align*} since both left-hand sides are strictly positive (because of our assumption that $\pp$ has no touch points) and can be expressed as a rational number with denominator $m$.  Therefore,
\begin{equation}\sum_{i=0}^{b-1} \alpha c q_i -  \sum_{i=m-c}^{m-1} \alpha b q_i \geq \frac{\alpha c}{m} + \frac{\alpha b}{m}.\label{eq:approx}\end{equation}
To approximate $\sum_i q_i y_i$, we bound the two terms on the right-hand side of~\Cref{eq:approx} over the entire process which moves all the
$y_i$ to zero.

Since $\alpha c$ is the amount that each of the minimal $y_i$'s were moved during each step, the sum of $\alpha c/m$ over all steps is
$1/m$ times the total amount the minimal coordinates are increased over the whole process.  But this begins at $x_0$ and terminates at $0$, so the total amount they change by is $-x_0$ and the sum of the first term on the right-hand side of~\Cref{eq:approx} over the whole process is $-x_0/m$.  Similarly, the sum of the
second term on the right-hand side of~\Cref{eq:approx} over the whole process is $x_{m-1}/m$.  We obtain the bound
\[\sum_{i=0}^{m-1} 2m^2 q_i x_i \leq 2m \left(x_0-x_{m-1}\right),\]
 which we can make as negative as we like by
requiring $\mathcal N(\xx)$ to be sufficiently large.

\medskip

We now consider the case that the resorting is not necessarily trivial---that is, the action on $\xx$ as an element of $\RR^m_0 \setminus \mathcal{H}$ and as an element of $V_0^m$ do \emph{not} necessarily coincide.  Fix a sorted tuple $\xx$ and distinguish this tuple as living in $\RR^m_0 \setminus \mathcal{H}$ or $V_0^m$ by writing $\xx_U \in \RR^m_0 \setminus \mathcal{H}$ and $\xx_V \in V_0^m$.  Let \[\xx_U^{(j)} := \pp_0\pp_1\cdots\pp_{j-1}(\xx_U) \text{ and } \xx_V^{(j)} :=\pp_0\pp_1\cdots\pp_{j-1}(\xx_V).\]  We note that after applying a single letter $i$ to $\xx_U$ and $\xx_V$, the difference between any coordinate of $\xx_U$ and the same coordinate of $\xx_V$ is less than $m$.  By induction,
corresponding coordinates of $\xx_U^{(j)}$ and $\xx_V^{(j)}$ differ by at most $mj$.

On the other hand, for any tuple $\yy$, applying a single letter $i$ to $\yy_U$ or $\yy_V$, we compute
\begin{align*}\mathcal N(i(\yy_U))-\mathcal N(\yy_U) = \mathcal N(i(\yy_V))-\mathcal N(\yy_V) &= \sum_{\substack{0\leq j < m\\ j\ne i}} \Big[ (y_i+m - y_j)^2 - (y_i-y_j)^2\Big] \\
&= (m-1)m^2 +2m\sum_{\substack{0\leq j < m\\ j\ne i}} (y_i-y_j) \\ &= (m-1)m^2+2m^2y_i.\label{eq:diffdiff}\end{align*}

By telescoping, we can now bound the difference $\mathcal N(\pp(\xx_U))-\mathcal N(\pp(\xx_V))$:

\begin{align*}
\mathcal N(\pp(\xx_U))-\mathcal N(\pp(\xx_V)) &= \sum_{j=0}^{n-1} \left(\mathcal N(\xx_U^{(j+1)})-\mathcal N(\xx_U^{(j)})\right)-\left(\mathcal N(\xx_V^{(j+1)})-\mathcal N(\xx_V^{(j)})\right) \\ &\leq \sum_{j=0}^{n-1} 2m^3 j = n(n-1)m^3,
\end{align*}
using our analysis that corresponding coordinates in $\xx_U^{(j)}$ and $\xx_V^{(j)}$ differ by at most $mj$.
This quantity is still a constant in the fixed parameters $n$ and $m$, so we
can overcome it by requiring that $\mathcal N(\xx)$ be sufficiently large.

\medskip

We conclude that the second term of the right-hand side of~\Cref{eq:pdiff} dominates the first if $\mathcal N(\xx)>N$ for $N$ sufficiently large, so that $\mathcal N(\pp(\xx))<\mathcal N(\xx)$ for $\mathcal N(\xx)>N$.
\end{proof}

In the case $\gcd(m,n)=1$, \Cref{lem:contained}, together with our previous results, suffices to show that the set of fixed points is of affine dimension 0 (i.e., consists of a single point).

\begin{corollary}
Let  $\gcd(m,n)=1$ and $\pp \in \Park_m^n$.  Then $\Fix(\pp)$ is of affine dimension 0.  In particular, $|\Fix(\pp)|=1$.
\label{lem:rp_unique}
\end{corollary}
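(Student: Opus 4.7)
The plan is to simply assemble the three facts already established.  First I would observe that when $\gcd(m,n)=1$, the noncrossing inequality in the definition of $(m,n)$-parking words is \emph{strict} for every $1 \le i < m$: indeed, $in/m$ is never an integer in this range, so no $i$ can satisfy $\big|\{j:\pp_j<i\}\big|=in/m$.  Hence $\pp$ has no touch points in the sense of~\Cref{def:touch}.

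Next I would invoke~\Cref{lem:rp_fixed}, which under precisely the hypothesis ``$\pp$ has no touch points'' guarantees that the action of $\pp$ on $V^m$ admits at least one fixed point, so $\Fix(\pp)$ is nonempty.  On the other hand, with $d=\gcd(m,n)=1$,~\Cref{lem:contained} says that $\Fix(\pp)$ is contained in an affine subspace of dimension $d-1=0$.  An affine subspace of dimension $0$ is a single point, so the nonempty set $\Fix(\pp)$ must consist of exactly one point.  This shows $|\Fix(\pp)|=1$, and combined with~\Cref{lem:convex} (which is automatic for a singleton) this gives affine dimension $0$ in the sense of~\Cref{def:space_and_norm}.

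There is no real obstacle here: the corollary is a bookkeeping consequence of the two main lemmas of this subsection.  The only point worth flagging is the small logical step that coprimality rules out touch points, which is needed to apply~\Cref{lem:rp_fixed} in the first place; once that is in hand, existence comes from~\Cref{lem:rp_fixed} and uniqueness comes from the dimension bound in~\Cref{lem:contained}.
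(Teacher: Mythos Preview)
Your proposal is correct and follows essentially the same approach as the paper, which simply remarks (just before the corollary) that \Cref{lem:contained} together with the previous results gives the conclusion; the paper has also already observed after \Cref{def:touch} that coprimality rules out touch points. The only minor slip is the cross-reference: affine dimension is defined in the paragraph preceding \Cref{lem:convex}, not in \Cref{def:space_and_norm}.
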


We now show that $\Fix(\pp)$ is of affine dimension $d-1$ for 
$d=\gcd(m,n)$ in the case that $\pp$ has no touch points.

\begin{lemma}
Let  $\gcd(m,n)=d\geq 1$ and $\pp \in \Park_m^n$ with no touch points.  Then $\Fix(\pp)$ is bounded of affine dimension $d-1$.
\label{lem:nrp_fixed0}
\end{lemma}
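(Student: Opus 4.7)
The statement has two parts---boundedness of $\Fix(\pp)$, and that its affine dimension equals $d-1$---which I would handle separately.

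Boundedness follows directly from the quantitative estimate extracted from the proof of \Cref{lem:rp_fixed}: under the no-touch-points hypothesis, $|\pp(\xx)|<|\xx|$ for all $\xx$ with $|\xx|$ exceeding some threshold $N=N(\pp,m,n)$. Any fixed point satisfies $|\pp(\xx)|=|\xx|$, so $\Fix(\pp)\subseteq\{\xx:|\xx|\leq N\}$.

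For the dimension, \Cref{lem:contained} already gives the upper bound $\dim\Fix(\pp)\leq d-1$; only the matching lower bound remains. My plan is to construct an explicit $(d-1)$-parameter family of fixed points near a given $\xx_0\in\Fix(\pp)$ (which exists by \Cref{lem:rp_fixed}). By \Cref{lem:ad}, the coordinates of $\xx_0$ partition into $d$ blocks $B_0,\ldots,B_{d-1}$ of size $m/d$, indexed by cosets of $d\ZZ/m\ZZ$ in $\ZZ/m\ZZ$. For small parameters $\mathbf{t}=(t_0,\ldots,t_{d-1})\in\RR^d$ with $\sum_g t_g=0$, I would set $\xx(\mathbf{t})$ equal to $\xx_0$ with each coordinate in $B_g$ shifted by $t_g$; the linear constraint ensures $\xx(\mathbf{t})\in V^m$ and leaves $d-1$ free parameters. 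To show $\xx(\mathbf{t})\in\Fix(\pp)$ for $\mathbf{t}$ sufficiently small, I would track the action of $\pp$ letter by letter: the sorted order at every intermediate step is locally stable under small shifts, so each letter targets the same positional coordinate as for $\xx_0$. The induced permutation $\tau$ of coordinates shifts residues modulo $m$ by $-n$, and since $d\mid n$, $\tau$ preserves each block $B_g$ setwise; consequently the fixed-point identity $x_{\tau(j)}=x_j+mN_j-n$ lifts coordinate-wise to $\xx(\mathbf{t})$, because both sides receive the same shift $t_{g(j)}$.

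The main obstacle is the stability of the combinatorial trajectory when $\xx_0$ exhibits coincidences---ties between coordinates belonging to different blocks, either at the start or at intermediate steps---since shifts may then alter the sorting pattern and break the lifting argument. I would resolve this either by choosing $\xx_0$ to be a generic point of $\Fix(\pp)$ (by convexity from \Cref{lem:convex}, producing the $(d-1)$-family at a single interior point is enough) or by a case analysis: ties within a common block move together under the shift and cause no difficulty, while ties across blocks can be avoided by a preliminary perturbation along one of the block-shift directions before applying the main construction. Combining this lower bound with \Cref{lem:contained} then gives $\dim\Fix(\pp)=d-1$.
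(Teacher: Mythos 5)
The paper's proof of this lemma and your proposal share the same skeleton: boundedness from the contraction estimate in \Cref{lem:rp_fixed}, the upper bound on affine dimension from \Cref{lem:contained}, and for the lower bound a $(d-1)$-parameter family of fixed points obtained by shifting the $d$ coordinate ``blocks'' of \Cref{lem:ad} by small amounts summing to zero. So your construction for the lower bound is exactly the one the paper uses. But you have correctly identified, and then failed to resolve, the critical obstruction: when $\xx_0$ lies on a hyperplane of $\mathcal{H}$ (i.e.\ two coordinates are congruent mod $m$), the block partition is ambiguous, and the combinatorial trajectory of $\pp$ is \emph{not} locally stable under block-shifts, so the lifting argument breaks.

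Both of your proposed escape routes are circular. ``Choosing a generic point of $\Fix(\pp)$'' presupposes that $\Fix(\pp)$ has a relative interior in a $(d-1)$-dimensional subspace, which is precisely what the lemma is asserting; if $\Fix(\pp)$ were a single tied point there would be no generic point to pick. ``A preliminary perturbation along one of the block-shift directions'' assumes that such a perturbation stays inside $\Fix(\pp)$, but at a tied point the block-shift does not in general preserve fixity in all directions (concretely, for $(m,n)=(4,2)$ and $\pp=01$, the fixed point $\xx_0=(-2,0,0,2)$ is on a hyperplane; shifting the blocks by $\pm t$ gives a fixed point only for $t\le 0$, not for $t>0$). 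So neither route lets you dodge the tie problem without first knowing what you are trying to prove.

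The missing ingredient supplied by the paper is a second Brouwer fixed-point argument, on a sphere rather than a ball, used to show that $\Fix(\pp)$ contains a point lying on \emph{no} hyperplane of $\mathcal{H}$. One takes a face $F$ of $\mathcal{H}$ that is maximal among faces with the same intersection with $\Fix(\pp)$, supposes $F$ has codimension $c\ge 1$, and considers the action of $\pp$ on a small $(c-1)$-sphere around a fixed point of $F$ in the normal plane to $F$. Because $\pp$ fixes a point of $F$ and acts by piecewise isometries (foldings), and the sphere is not itself fixed, the image of the sphere avoids some open ball $B$; restricting to the disk $S\setminus B$ and applying Brouwer produces a fixed point off $F$, contradicting maximality. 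Thus there is a fixed point $\xx$ off every hyperplane; at such a point no two coordinates are congruent mod $m$, this property persists throughout the trajectory (since differences change only by multiples of $m$), the block partition is canonical, and your block-shift argument then goes through verbatim to produce an open $(d-1)$-ball of fixed points. You should insert this sphere argument (or an equivalent way to produce a generic fixed point) before the block-shift step; as written, the proof has a genuine gap at exactly the place you flagged as the main obstacle.
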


\begin{proof}
$\Fix(\pp)$ is bounded, since we showed in~\Cref{lem:rp_fixed} that $\mathcal N(\pp(\xx))<\mathcal N(\xx)$ for $\mathcal N(x)>N$ for some large $N$.

Let $F$ be a face of the affine arrangement $\mathcal{H}$ such that for any face $G$ having $F$ as a face, we have $G\cap\Fix(\pp)=F\cap\Fix(\pp)$.  Suppose, seeking a contradiction, that some $F$ of codimension $c\geq 1$ exists.  Consider the action of $\pp$ on a small sphere $S$ around a point $x$ of $\Fix(\pp)$ in the plane normal to $F$.  Since the sphere is not fixed by $\pp$, the action of $\pp$ on it is by some non-trivial foldings.  The image therefore misses some open ball $B$ in the sphere.  Restricting, $\pp$ now defines a map from $S\setminus B$ to
$S\setminus B$, and by Brouwer's fixed point theorem, it has a fixed point.  This contradicts our assumption on $F$.  Thus there must be a fixed point $\xx$ not lying on any hyperplane.

\Cref{lem:ad} divides the set of all coordinates of $\xx$ into $d$ subsets of size $m/d$, where the elements of each set are congruent modulo $d$.  Since $\xx$ lies on no hyperplane, no coordinate value modulo $m$  is repeated, so it is unambiguous how to apportion the coordinates into these sets.

Now consider the action of $\pp$, omitting rebalancing.  Each entry in the multiset of coordinates is changed by a multiple of $d$.  Thus the entries in each of the $d$ subsets are permuted among themselves by the action of $\pp$.  We may translate each family with respect to the others by some small amount without changing the relative order of the coordinates, so all such points are still fixed.  This gives us an
open ball around $\xx$ in the $(d-1)$-dimensional affine subspace constructed in \Cref{lem:contained} consisting entirely of fixed points.  $\Fix(\pp)$ is therefore of affine dimension $d-1$.
\end{proof}

\begin{example}
As in~\Cref{ex:partitioning}, fix $(m,n) = (6,9)$ with $d = \gcd(m,n) = 3$ and the $(m,n)$-parking word $\pp = 020101151$.  Note that $\pp$ has no touch points, and recall that $\pp$ has a fixed point $\xx=(-3,1,2,4,6,11) \in V_{21}^6$. Modulo $d$, this fixed point is of the form $(0,1,2,1,0,2)$.  Let \begin{align*}v_0 &= (-3,0,3,3,6,12),\\
v_1 &= (-2,1,1,4,7,10), \text{ and} \\
v_2 &= (-4,2,2,5,5,11).\end{align*}  Then one can check that $\mathrm{Fix}(\pp) \supseteq \mathrm{conv}(v_0,v_1,v_2).$
\end{example}

\subsection{Parking words with touch points}
When the parking word has a touch point, we now use~\Cref{lem:rp_fixed} to also produce infinitely many fixed points. The value of $\mathcal N$ on these fixed points may now be arbitrarily large.

\begin{lemma}
The action of $\w \in \Park_m^n$ on $V_t^m$ has infinitely many fixed points when $\w$ has at least one touch point.  The set $\Fix(\w)$ has affine dimension $d-1$, and contains fixed points on which $\mathcal N$ is arbitrarily large.
\label{lem:nrp_fixed}
\end{lemma}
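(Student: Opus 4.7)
The plan is to induct on $m$, splitting $\w$ at a touch point $i$ into two smaller parking words and gluing their fixed points into a $1$-parameter family of fixed points of $\w$. Since $in/m$ must be an integer and $\gcd(m/d, n/d) = 1$ with $d := \gcd(m,n)$, the touch point $i$ is forced to be a multiple of $m/d$; set $d_1 := id/m$ and $d_2 := (m-i)d/m$, so $d_1, d_2 \geq 1$ and $d_1 + d_2 = d$.

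Let $\w^{(1)}$ be the subword of letters of $\w$ strictly less than $i$ (in their original order), and let $\w^{(2)}$ be the subword of letters of $\w$ at least $i$, shifted by $-i$. A direct check using~\Cref{eq:parking_def2} shows that $\w^{(1)} \in \Park_i^{in/m}$ with $\gcd(i, in/m) = d_1$, and $\w^{(2)} \in \Park_{m-i}^{(m-i)n/m}$ with $\gcd(m-i, (m-i)n/m) = d_2$. By the inductive hypothesis together with~\Cref{lem:rp_unique} and~\Cref{lem:nrp_fixed0}, $\Fix(\w^{(1)}) \subseteq V^i$ has affine dimension $d_1 - 1$ and $\Fix(\w^{(2)}) \subseteq V^{m-i}$ has affine dimension $d_2 - 1$.

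For each $\yy \in \Fix(\w^{(1)})$, $\zz \in \Fix(\w^{(2)})$, and parameter $c > 0$, I would assemble a candidate fixed point of $\w$ in $V^m$ by stacking rescaled copies of $\yy$ and $\zz$ with a large gap:
\[
\xx := \bigl(\tfrac{m}{i} y_0 - (m-i)c,\, \ldots,\, \tfrac{m}{i} y_{i-1} - (m-i)c,\, \tfrac{m}{m-i} z_0 + ic,\, \ldots,\, \tfrac{m}{m-i} z_{m-i-1} + ic\bigr).
\]
Then $\sum_j x_j = 0$ by construction, and for $c$ sufficiently large the gap $x_i - x_{i-1}$ exceeds the maximum displacement a single coordinate can accrue under any prefix of $\w$ (a bound of order $mn$), so the gap is preserved throughout the application of $\w$. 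Consequently letters $<i$ only ever modify the first $i$ coordinates, letters $\geq i$ only the last $m-i$, and between consecutive $\w^{(1)}$-letters inside $\w$ the intervening $\w^{(2)}$-letters shift the first $i$ coordinates by a uniform constant, which preserves their sorted order. From this I would argue that the sequence of coordinates chosen by the $\w^{(1)}$-letters of $\w$ on the first $i$ coordinates of $\xx$ coincides with the sequence chosen by $\w^{(1)}$ acting on $(m/i)\yy$ in the rescaled sense (adding $m$ rather than $i$ per letter); the scaling $m/i$ is exactly what transports fixedness of $\yy$ under $\w^{(1)}$ to fixedness of the first $i$ coordinates of $\xx$ under $\w$, and symmetrically for the last $m-i$ coordinates using $(m/(m-i))\zz$.

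The map $(\yy, \zz, c) \mapsto \xx$ is injective (the mean of the first $i$ coordinates of $\xx$ recovers $c$), and its domain is an affine space of dimension $(d_1 - 1) + (d_2 - 1) + 1 = d - 1$. So $\Fix(\w)$ contains a subset of affine dimension $d - 1$, and together with~\Cref{lem:contained} this forces $\Fix(\w)$ to have affine dimension exactly $d - 1$. Letting $c \to \infty$ while keeping $\yy$ and $\zz$ fixed produces fixed points of $\w$ of arbitrarily large norm, and in particular infinitely many of them. The main technical obstacle will be making rigorous both the quantitative ``gap maintained'' estimate and the inductive bookkeeping that the rescaled action of $\w^{(1)}$ on $(m/i)\yy$ faithfully tracks the action of $\w$ on the first $i$ coordinates of $\xx$, so that fixedness of $\yy$ and $\zz$ really does force fixedness of $\xx$.
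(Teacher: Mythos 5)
Your construction is essentially the same as the paper's: split $\w$ at a touch point into sub-parking-words, find fixed points of each piece, rescale by $m/i$ (equivalently, by $n/n_j$ in the paper's notation, where $n_j$ is the number of letters in the piece), and stack the pieces with a large spacing parameter so that during the application of $\w$ the coordinate blocks never interleave. The only real difference is bookkeeping: you split at a \emph{single} touch point and induct on $m$, whereas the paper splits at \emph{all} touch points simultaneously. The paper's choice avoids recursion at the cost of tracking several spacing parameters $N_0,\dots,N_k$; the payoff is that, because consecutive touch points of $\w$ are not touch points of the shifted sub-words $\qq^{(j)}$, each piece automatically has \emph{no} touch points, so~\Cref{lem:rp_fixed} and~\Cref{lem:nrp_fixed0} apply directly without invoking the lemma being proved. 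Your recursive version is equally valid once you note (as you do) that the pieces have strictly smaller $m$. One small point in your favor: you make the affine-dimension count $(d_1-1)+(d_2-1)+1 = d-1$ explicit and tie it to~\Cref{lem:contained}, which is exactly what is needed to justify the statement's ``affine dimension $d-1$'' claim; the paper's proof gestures at this but does not carry out the count (in the paper's parallel decomposition, the analogous count is $k + \sum_j(a_j-1) = \sum_j a_j - 1 = d-1$ where $a_j = \gcd(m_{j+1}-m_j,n_j)$). The quantitative ``gap maintained'' bound you flag as a remaining obstacle is handled in the paper simply by the explicit choice $N_j > mn + N_{j-1}$, which works because each coordinate moves by at most $mn$ in total during one pass of $\w$; the same bound suffices for your $c$.
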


\begin{proof}  As in~\Cref{lem:rp_fixed}, it suffices to argue for $V_t^m$ for $t=0$.  Suppose that $\gcd(m,n)=d\neq 1$ and that $\w$ has $k\geq 1$ touch points.  We will break $\pp$ into a number of smaller parking words based on its touch points, find the unique fixed points for each of those parking words, and then reassemble them in uncountably many ways to find fixed points for $\pp$.  To this end, list the $k$ touch points of $\pp$ as $m_1,\dots,m_k$ with \[m_0=0<m_1<m_2<\cdots<m_{k}<m=m_{k+1}.\]  For $0\leq j \leq k$, let $\pp^{(j)}$ be the (not-necessarily consecutive) subword of $\pp$ containing all letters $p$ of $\pp$ such that $m_{j}\leq p < m_{j+1}$.  Let $n_j$ be the length of $\pp^{(j)}$---necessarily a multiple of $n/d$---and note that $\pp$ is a shuffle of $\pp^{(0)},\pp^{(1)},\ldots,\pp^{(k-1)}$.

To define smaller parking words, we shift the individual letters of $\pp^{(j)}$ by the previous touch point to produce the $(m_j,n_j)$-parking word $\qq^{(j)}:= \pp^{(j)}-m_{j}$.

We can now use~\Cref{lem:rp_fixed} and the previous case to find $\xx^{(j)} \in V_0^{m_j}$ that are fixed points for the $\qq^{(j)}$.    In preparation to reassemble these individual fixed points $\xx^{(j)}$ into a fixed point for $\pp$, we scale them to define \[\xx^{(j)}_N := \frac{n}{n_j}\xx^{(j)}+N_j\] for some $N_j \in \RR$.  Finally, define $\xx_N \in V_0^m$ by the concatenation: \[\xx_N := \left(\xx^{(0)}_N,\xx^{(1)}_N,\ldots,\xx^{(k)}_N\right),\] and then rebalancing so that the sum is 0.

We now check that $\xx_N$ is really a fixed point of $\pp$, as long as the $N_j$ give sufficient space between the $\xx_N^{(j)}$.  Since $\pp$ is a shuffle of the $\pp^{(j)}$, as long as the individual coordinates of $\xx_N$ do not overlap during the application of the letters of $\pp$ (for example, we may take $N_j>mn+N_{j-1}$), we may discuss the action of $\pp$ on each component  $\xx_N^{(j)}$ separately.  On $\xx_N^{(j)}$, then, only the subword $\pp^{(j)}$ of $\pp$ will act; the only difference from its usual action on $\xx^{(j)}$ is that (as a piece of the larger parking word $\pp$) it adds $m$ rather than $m_j$---but we have compensated for this by the scaling factor $\frac{n}{n_j}$.
\end{proof}

\begin{example}
We illustrate the proof of~\Cref{lem:nrp_fixed}.  Let $(m,n)=(9,12)$ so that $d=3$, and let $\pp = 531030678631$.  Then there are 2 touch points of $\pp$: $m_1=3$ and $m_2=6$, so that \[\pp^{(0)} = 1001, \pp^{(1)}=5333, \text{ and } \pp^{(2)} = 6786\] and \[\qq^{(0)} = 1001, \qq^{(1)}=2000, \text{ and } \qq^{(2)} = 0120.\]
Fixed points for $\qq^{(j)}$ are
\[\xx^{(0)} = (-2,0,2),\, \xx^{(1)} = (-1,0,1), \text{ and } \xx^{(2)} = (-2,-1,3),\]
so that
\[\xx^{(0)}_N = (-6,0,6),\, \xx^{(1)}_N = (-3,0,3)+N_2, \text{ and } \xx^{(2)}_N = (-6,-3,9)+N_3,\]
and before rebalancing
\[\xx_N = \left(-6,0,6,-3+N_2,N_2,3+N_2,-6+N_3,-3+N_3,9+N_3\right).\]

When $N_2>21$ and $N_3>21+N_2$, we see that portion of $\pp$ corresponding to $\pp^{(j)}$ acts only on the $3j-2$, $3j-1$, and $3j$th coordinates of $\xx$.
\end{example}

\subsection{Non-parking words}

\begin{lemma}
For any $\xx \in V_t^m$ and any $\w \in [m]^n \setminus \Park_m^n$, $\lim_{i\to \infty} (\w^i(\xx))_{m-1} = \infty$.  In particular, $\w$ has no fixed points.
\label{lem:no_fixed}
\end{lemma}

\begin{proof}
It suffices to argue for $V_t^m$ for $t=0$.  We show that repeated application of $\w$ sends the last coordinate of any point to infinity.  Suppose that $\w \in [m]^n \setminus \Park_m^n$ is not a parking function because it has too many numbers that are at least $k$, and choose $k$ maximal.
Let \[\xx= \left( x_0,x_1,\ldots,x_{m-1}\right) \in V_0^m\] be a vector with sum $0$.  We claim that the result of applying $\w$ to $\xx$ has the effect of increasing the difference between the average value of $x_k,\dots,x_{m-1}$ and the average value of $x_0,\dots,x_{k-1}$ by a fixed quantity.  Thus, after enough applications of $\w$, the value of $x_{m-1}$ will be arbitrarily large.

In the course of applying $\w$ to $\xx$ there are two ways that the difference between the
average value of $x_0,\dots,x_{k-1}$ and the average value of $x_{k},\dots,x_{m-1}$ changes.  One is as a result of adding $m$ to an entry corresponding to an element of $\w$.  By the assumption on $\w$, these steps have the property that, on average, a more than proportionate number of these steps are applied to the entries $x_k,\dots,x_{m-1}$, which therefore increases the difference between the average values by a fixed positive amount.  The other way that the difference between the averages increases is in the resort step.  If an element in $x_1,\dots,x_{k-1}$ is increased far enough that it moves into the top $m-k$ elements, then it is resorted into one of these positions.  Whenever this happens, this also increases the difference between the average values.
\end{proof}

\subsection{Summary}
We obtain~\Cref{thm:park_char} as a corollary of~\Cref{lem:rp_fixed,lem:nrp_fixed0,lem:nrp_fixed,lem:no_fixed} and ~\Cref{lem:rp_unique}.  Examples for $m=3$ are given in~\Cref{fig:examplessection3}.

 The remainder of this paper is devoted to explaining the coprime case in more detail, explicitly identifying the isolated fixed points of parking words as the centers of alcoves of dominant affine permutations whose inverses lie in the Sommers region.  It would be desirable to explicitly identify the regions of fixed points in the non-relatively prime case.  We note that in the special $(m,mk)$ case when the fixed regions are full dimensional, Gorsky, Mazin, and Vazirani have recently identified the set of fixed regions of an $(m,mk)$-parking word with the dominant regions in the $k$-Shi arrangement of $\widetilde{\mathfrak{S}}_{m}$~\cite[Section 3.4]{gorsky2017rational} (compare with~\Cref{fig:examplessection3}).

\begin{figure}[htb]
\centering
\begin{tikzpicture}[scale=1.6]
\draw[thick] (0,0) -- (1.5,2.6);
\draw[thick] (0,0) -- (-1.5,2.6);
\draw[thick] (-.5,.866) -- (.5,.866);
\draw[thick] (-.5,.866) -- (.5,2.6);
\draw[thick] (.5,.866) -- (-.5,2.6);
\node(a) at (0,0.577) {$123$};
\node(b) at (0,1.155) {$024$};
\node(e) at (0,2.3094) {$\overline{2}26$};
\node(d) at (.5,1.443) {$015$};
\node(c) at (-.5,1.443) {$\overline{1}34$};
\node[draw,gray](a3) at (-3,-.866) {$\begin{array}{ccc} 000 &010 &100 \\ 110 &200 &210\end{array}$};
\node[draw,gray](a4) at (3,-.866) {$\begin{array}{cccc} 0000 &0010 &0100 &0110 \\ 0200& 0210& 1000& 1010 \\ 1100 &2000& 2010& 2100\end{array}$};
\node[draw,gray](b3) at (-3,.2887) {$\begin{array}{ccc} 001 &020 &101 \\ 110 &200 &210\end{array}$};
\node[draw,gray](b4) at (3,.2887) {$\begin{array}{ccc} 0001 &0020 &0101 \\ 1001 &1020 &1200\end{array}$};
\node[draw,gray](c3) at (-3,1.44) {$\begin{array}{ccc}011 &021 &101\\ 110 &201 &210\end{array}$};
\node[draw,gray](d3) at (-3,2.59) {$\begin{array}{ccc}002 &020 &102\\ 120 &200 &210\end{array}$};
\node[draw,gray](c4) at (3,2.59) {$\begin{array}{cccc}0011 &0021 &0201 &2001\end{array}$};
\node[draw,gray](d4) at (3,1.44) {$\begin{array}{cccc}0002 &0102 &0120 &1002\end{array}$};
\node[draw,gray](e3) at (-3,3.753) {$\begin{array}{ccc}012 &021 &102 \\120 &201 &210\end{array}$};
\node[draw,gray](e4) at (3,3.753) {$\begin{array}{c}0012\end{array}$};
\draw[gray,->] (a3) -- (a);
\draw[gray,->] (b3) -- (b);
\draw[gray,->] (a4) -- (a);
\draw[gray,->] (b4) -- (b);
\draw[gray,->] (c3) -- (c);
\draw[gray,->] (c4) -- (c);
\draw[gray,->] (d3) -- (d);
\draw[gray,->] (d4) -- (d);
\draw[gray,->] (e3) -- (e);
\draw[gray,->] (e4) -- (e);
\end{tikzpicture}
\caption{The dominant part of the $\widetilde{\mathfrak{S}}_3$ Shi arrangement.  Each region is labeled by a coordinate corresponding to the one-line notation of the affine permutation whose alcove is lowest in the region (see~\Cref{sec:affine_symmetric_and_regions} for more details).  The gray words on the left are the $(3,3)$-parking words that fix every point of the (closed) region to which they point; the gray words on the right are the $(4,3)$-parking words that fix precisely the coordinate to which they point.}
\label{fig:examplessection3}
\end{figure}

\section{Parking Filters}
\label{sec:parking_functions}

\emph{For the rest of the paper, we fix $m$ and $n$ relatively prime.}  In this section, we define the combinatorial objects---generally thought of as Dyck paths and labeled Dyck paths---that we will use to compute the zeta map defined in~\Cref{sec:combinatorial_zeta}.  These objects are all well-known; our main contribution is the simplicity of our definition of the zeta map on parking functions in~\Cref{def:zetamap}, and its relation with affine permutations in~\Cref{sec:affine_symmetric_and_regions}.

\subsection{Filters}\label{sec:filters}
Fix $m$ and $n$ relatively prime, and label the point $(i,j) \in \ZZ \times \ZZ$ by its \defn{level} \[\ell(i,j)=(i,j) \cdot (m,n) = im+jn.\]  If we draw the levels of points in the plane, rows correspond to residue classes modulo $m$, while columns correspond to residue classes modulo $n$.  Any fixed row and column intersect in a unique point, and the Chinese remainder theorem ensures that the levels are distinct modulo $mn$ in any contiguous $n \times m$ rectangle.  A portion of the levels of $\ZZ \times \ZZ$ for $(m,n)=(3,4)$ and $(3,5)$ is illustrated in~\Cref{fig:filters}.

\begin{definition}
An \defn{$(m,n)$-filter} $\idl$ is a subset of $\ZZ \times \ZZ$ with $\min_{(i,j) \in \idl} \ell(i,j) > -\infty$, such that whenever the point $(i,j)$ is in $\idl$, then 
the following points are also in $\idl$:
\begin{itemize}
\item $(i+m,j)$ and $(i,j+n)$, as well as
\item all $(i',j')$ for which $\ell(i',j')=\ell(i,j)$.
\end{itemize}
A \defn{corner} of $\idl$ is a point $(i,j)$ such that neither $(i-m,j)$ nor $(i,j-n)$ are in $\idl$.  We write $\FF_m^n$ for the set of all $(m,n)$-filters.
\label{def:filters}
\end{definition}

Interchanging the copies of $\ZZ$ in $\ZZ \times \ZZ$ gives a bijection between the set of $(m,n)$-filters and the set of $(n,m)$-filters; we call this the \defn{$(m{\leftrightarrow}n)$-bijection}.  An $(m,n)$-filter $\idl$ is specified in three natural ways:
\begin{itemize}
    \item $\ell(\idl) := \{ \ell(i,j) : (i,j) \in \idl\}$, the set of all its levels,
    \item $m(\idl) := \mathrm{sort}([ \min_{(i,j) \in \idl} \ell(i,j) : j \in \mathbb{Z} ])$, i.e., the sorted list formed by taking, for each row, the minimal level of a point in that row which is also in $\idl$, or
    \item $n(\idl) := \mathrm{sort}([ \min_{(i,j) \in \idl} \ell(i,j) : i \in \mathbb{Z} ])$, i.e., the sorted list formed by taking, for each column, the minimal level of a point in that column which is also in $\idl$.
\end{itemize}
Note that $m(\idl)$ consists of $m$ integers, one from each congruence class mod $m$, while $n(\idl)$ consists of $n$ integers, one from each congruence class mod $n$.   An example of~\Cref{def:filters} is given in~\Cref{fig:filters}. 

\begin{figure}[htb]
\includegraphics{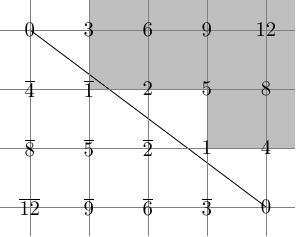} \hfill \includegraphics{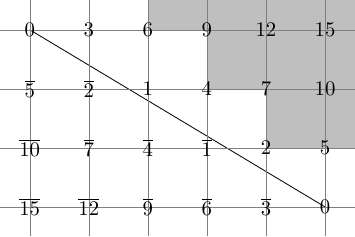}
\caption{Ignoring the shading, a portion of the levels of $ \ZZ \times \ZZ$ for $(m,n)=(3,4)$ and $(3,5)$  (the picture is extended to the rest of the plane by periodicity).  The solid gray line through the points of level $0$ separates the positive and negative levels.  On the left, the shading specifies $\idl \in \FF_3^4$ as those lattice points contained in the shaded region; similarly, the shading on the right specifies $\idl' \in \FF_3^5$.  One checks that $m(\idl)=[-1,1,3]$, $n(\idl)=[-1,1,2,4]$, $m(\idl')=[2,4,6]$, and $n(\idl')=[2,4,5,6,8]$.}
\label{fig:filters}
\end{figure}

\begin{definition}\label{def:equiv}
We say that $\idl,\idl'\in \FF_m^n$ are \defn{equivalent} if $\idl = \idl'+(x,y)$ for some $(x,y) \in \ZZ \times \ZZ,$ and write $\EF_m^n$ for the set of equivalence classes of $\FF_m^n$.
\end{definition}

\begin{definition}
Define a directed graph $\mathfrak{F}_m^n$ on $\EF_m^n$ with a directed edge from $\widetilde{\idl} \in \EF_m^n$ to $\widetilde{\idl}' \in \EF_m^n$ iff there is some $\idl' \in \widetilde{\idl}'$ and some $\idl \in \widetilde{\idl}$ such that $\ell(\idl')$ can be obtained from $\ell(\idl)$ by removing a single level from $\idl$ (pictorially, this means that $\idl'$ can obtained from $\idl$ by removing a corner).  We write $\widetilde{\bal}_m^n$ for the equivalence class containing the $(m,n)$-filters generated by a single level.
\label{def:directed_graph}
\end{definition}

The $(m{\leftrightarrow}n)$-bijection gives an isomorphism between $\mathfrak{F}_m^n$ and $\mathfrak{F}_n^m$.  The graphs $\mathfrak{F}_3^4$ and $\mathfrak{F}_3^5$ are illustrated in~\Cref{fig:balanced,fig:balanced2}.

\begin{figure}[htb]
\centering
\scalebox{0.7}{
\begin{tikzpicture}[scale=3]
\node (A) [fill=white!20,rounded corners=2pt,inner sep=1pt] at (0,0) {\scalebox{0.7}{\includegraphics{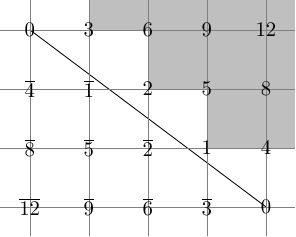}}};
\node (D) [fill=white!20,rounded corners=2pt,inner sep=1pt] at (1.5,2.5) {\scalebox{0.7}{\includegraphics{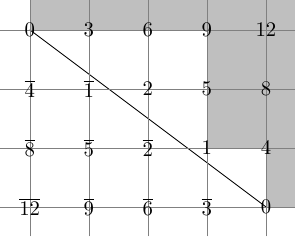}}};
\node (E) [fill=white!20,rounded corners=2pt,inner sep=1pt] at (0,3.5) {\scalebox{0.7}{\includegraphics{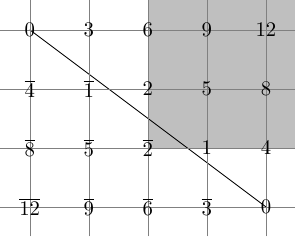}}};
\node (B) [fill=white!20,rounded corners=2pt,inner sep=1pt] at (-1.5,2.5) {\scalebox{0.7}{\includegraphics{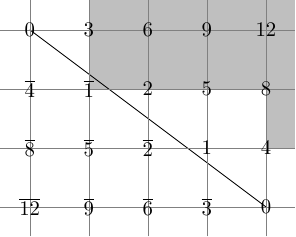}}};
\node (C) [fill=white!20,rounded corners=2pt,inner sep=1pt] at (0,1.5) {\scalebox{0.7}{\includegraphics{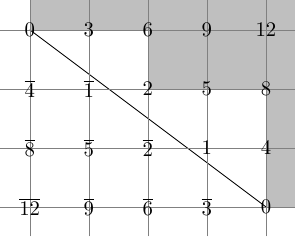}}};
\draw[->] (B) to [bend right]  node[midway, left] {$-1$} (A);
\draw[->] (C) to [bend right] node[midway, left] {$0$} (A);
\draw[->] (A) to [bend right] node[midway, right] {$2$} (C);
\draw[->] (B) to node[midway, above] {$4$} (E);
\draw[->] (D) -- (B) node[midway, above] {$1$};
\draw[->] (C) -- (B) node[midway, below] {$2$};
\draw[->] (D) -- (C) node[midway, below] {$0$};
\draw[->] (A) to [bend right] node[midway, right] {$3$} (D);
\draw[->] (E) -- (D) node[midway, above] {$-2$};
\draw [->] (A) edge[loop below]node{$1$} (A);
\end{tikzpicture}
}
\caption{The directed graph $\mathfrak{F}_3^4 \cong \mathfrak{F}_4^3$, with equivalence classes represented by the balanced $(3,4)$-filters of \protect\Cref{sec:balanced}.  The edge labels record the level of minimal element removed.  Compare with \protect\Cref{fig:perms_balanced}.}
\label{fig:balanced}
\end{figure}

\begin{figure}[htb]
\centering
\scalebox{0.7}{
\begin{tikzpicture}[scale=3.5]
\node (A) [fill=white!20,rounded corners=2pt,inner sep=1pt] at (0,0) {\scalebox{0.7}{\includegraphics{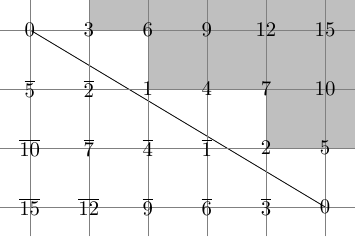}}};
\node (D) [fill=white!20,rounded corners=2pt,inner sep=1pt] at (1.5,2.5) {\scalebox{0.7}{\includegraphics{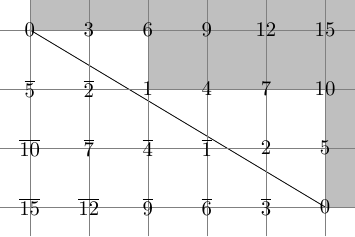}}};
\node (D2) [fill=white!20,rounded corners=2pt,inner sep=1pt] at (1.5,4) {\scalebox{0.7}{\includegraphics{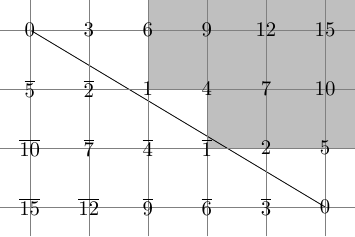}}};
\node (E) [fill=white!20,rounded corners=2pt,inner sep=1pt] at (0,5) {\scalebox{0.7}{\includegraphics{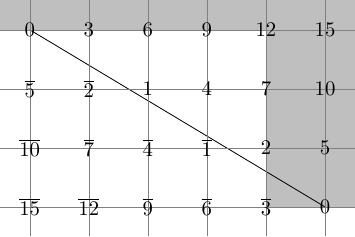}}};
\node (B) [fill=white!20,rounded corners=2pt,inner sep=1pt] at (-1.5,2.5) {\scalebox{0.7}{\includegraphics{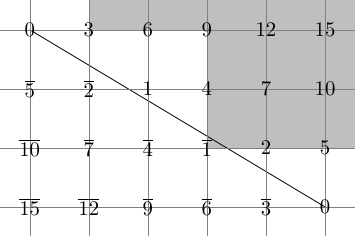}}};
\node (B2) [fill=white!20,rounded corners=2pt,inner sep=1pt] at (-1.5,4) {\scalebox{0.7}{\includegraphics{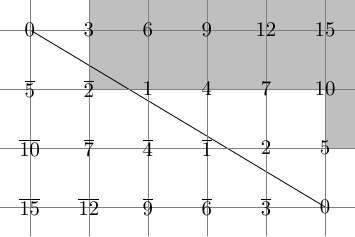}}};
\node (C) [fill=white!20,rounded corners=2pt,inner sep=1pt] at (0,1.5) {\scalebox{0.7}{\includegraphics{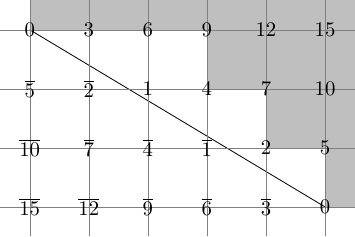}}};
\node (start) [fill=white!20,text=red,rounded corners=2pt,inner sep=1pt] at (-1.5,2) {start cycle here};
\draw[->] (B) to [bend right]  node[midway, left] {$-1$} (A);
\draw[->,red,thick,dotted] (C) to [bend right] node[midway, left] {$0$} (A);
\draw[->,red,thick,dotted] (A) to [bend right] node[midway, right] {$2$} (C);
\draw[->] (B2) to node[midway, above] {$5$} (E);
\draw[->,red,thick,dotted] (B2) to node[midway, right] {$-2$} (C);
\draw[->] (C) to node[midway, left] {$4$} (D2);
\draw[->,red,thick,dotted] (B) to node[midway, left] {$3$} (B2);
\draw[->] (D) -- (B) node[midway, above] {$1$};
\draw[->,red,thick,dotted] (C) -- (B) node[midway, below] {$2$};
\draw[->] (D) -- (C) node[midway, below] {$0$};
\draw[->] (A) to [bend right] node[midway, right] {$3$} (D);
\draw[->] (E) -- (D2) node[midway, above] {$-3$};
\draw[->] (D2) -- (D) node[midway, right] {$-1$};
\draw[->] (D2) -- (B2) node[midway, above] {$1$};
\draw [->] (A) edge[loop below]node{$1$} (A);
\end{tikzpicture}}
\caption{The directed graph $\mathfrak{F}_3^5 \cong \mathfrak{F}_5^3$, with equivalence classes represented by the balanced $(3,5)$-filters of \protect\Cref{sec:balanced}.  The edge labels record the level of the minimal element removed.  The cycle consisting of the dotted red edges corresponds to the parking $(m,n)$-filter tuple considered in \protect\Cref{rem:park_cycles}.  Compare with \protect\Cref{fig:perms_balanced2}.}
\label{fig:balanced2}
\end{figure}

\subsection{Representatives}

In this section, we introduce two natural representatives of the equivalence classes of $(m,n)$-filters:
\begin{itemize}
    \item Dyck $(m,n)$-filters, in bijection with Dyck paths and most useful to relate our constructions to the standard combinatorial objects (\Cref{rem:dyck_pic,rem:park_pic}); and
    \item balanced $(m,n)$-filters, which will be essential for specifying affine permutations (\Cref{thm:balanced_eq_sommers}, \Cref{thm:dominant_bij}, and~\Cref{thm:and_bij})
\end{itemize}

\subsubsection{Dyck filters}
\label{sec:dyck}
We define a first representative of the equivalence classes in $\EF_m^n$.  These representatives are usually defined in the literature as lattice paths staying above or below a diagonal, and we show how our definition recovers this interpretation in~\Cref{rem:dyck_pic}.

\begin{definition}
A \defn{Dyck $(m,n)$-filter} is an $(m,n)$-filter $\del$ such that \[\min_{(i,j) \in \del} \ell(i,j) = 0.\]  We write $\Dyck_m^n$ for the set of all Dyck $(m,n)$-filters.
\label{def:dyck_filter}
\end{definition}

In particular, for $\del \in \Dyck_m^n$, $\min(n(\del))=\min(m(\del))=0.$  Note that the $(m{\leftrightarrow}n)$-bijection restricts to a bijection between $\Dyck_m^n$ and $\Dyck_n^m$.

\begin{remark}
We relate~\Cref{def:dyck_filter} to the set of \defn{$(m,n)$-Dyck paths}---those lattice paths from $(0,0)$ to $(-n,m)$ using north steps $(0,1)$ and west steps $(-1,0)$ and staying above the line $(x,y)\cdot(m,n)=0$.  The boundary of an $(m,n)$-filter of $\ZZ \times \ZZ$ traces out a periodic path in the plane.  This periodicity allows us to restrict to the contiguous $n \times m$ rectangle with corners at level $0$ without losing information, giving $\Dyck_m^n$ the standard geometric interpretation as $(m,n)$-Dyck paths.  This is illustrated in~\Cref{fig:filters,fig:dyck_paths}.
\label{rem:dyck_pic}
\end{remark}

\begin{figure}[htb]
\centering
\includegraphics{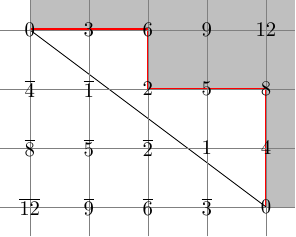} \hfill \includegraphics{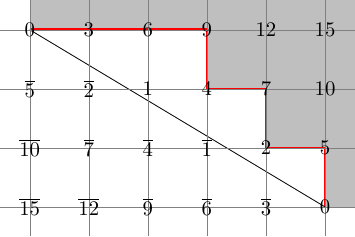}
\caption{The Dyck $(3,4)$- and $(3,5)$-filters corresponding to the filters in~\Cref{fig:filters} (they also happen to be balanced).  The boundary between two consecutive points with level $0$ traces an $(m,n)$-Dyck path (marked in red).  Recording the column lengths of the left path gives the $(3,4)$-Dyck word $[0,0,1,1]$, while the right path corresponds to the $(3,5)$-Dyck word $[0,0,0,1,2]$}
\label{fig:dyck_paths}
\end{figure}

All $(m,n)$-filters whose boundaries trace out the same path---up to translation---are equivalent to the same Dyck $(m,n)$-filter.

\begin{proposition}
Each equivalence class in $\EF_m^n$ contains a unique element of $\Dyck_m^n$.
\end{proposition}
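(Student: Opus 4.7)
The plan is to separate existence from uniqueness, using two elementary observations about the level function $\ell(i,j) = mi + nj$: first, translation by $(x,y)$ shifts all levels by $mx + ny$; second, since $\ell(n,-m) = 0$, the translation $(i,j) \mapsto (i+n,j-m)$ preserves every level class and therefore fixes every $(m,n)$-filter setwise (by the defining periodicity condition of~\Cref{def:filters}).

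For existence, I would pick any $\idl$ in the given equivalence class and set $k_0 := \min \ell(\idl)$ (this minimum is attained, since $m(\idl)$ consists of $m$ integers, one per row). Because $\gcd(m,n) = 1$, B\'ezout's identity gives integers $x,y$ with $mx + ny = -k_0$. Then $\idl' := \idl + (x,y)$ satisfies $\ell(\idl') = \ell(\idl) - k_0$, so $\min \ell(\idl') = 0$ and hence $\idl' \in \Dyck_m^n$ lies in the same equivalence class.

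For uniqueness, suppose $\del, \del' \in \Dyck_m^n$ are equivalent, so $\del = \del' + (x,y)$ for some $(x,y) \in \ZZ \times \ZZ$. Comparing level sets gives $\ell(\del) = \ell(\del') + (mx + ny)$, and since both minima equal $0$ we must have $mx + ny = 0$. Using $\gcd(m,n) = 1$, this forces $(x,y) = k(n,-m)$ for some $k \in \ZZ$. By the second observation above, translation by $k(n,-m)$ fixes any $(m,n)$-filter, so $\del = \del' + k(n,-m) = \del'$.

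There is no real obstacle here; the proof is essentially a two-line application of the fact that the lattice of $(m,n)$-level-preserving translations is exactly $\ZZ \cdot (n,-m)$, and that arbitrary level shifts are realized by $\ZZ \times \ZZ$ translations precisely because $\gcd(m,n)=1$. The only point requiring care is confirming that the minimum level $\min \ell(\idl)$ is attained, which follows from the standing convention that the row-minima $m(\idl)$ form a well-defined $m$-element set.
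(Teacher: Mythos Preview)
Your proof is correct and follows essentially the same approach as the paper's: translate so that the minimum level becomes zero. The paper's proof is a single sentence (``translate $\idl$ so that it touches the line $(x,y)\bullet(m,n)=0$, but does not go below''), whereas you unpack the two points this glosses over---that every integer level shift is realized by some $\ZZ\times\ZZ$ translation (B\'ezout), and that distinct translations realizing the same level shift nonetheless yield the same filter (periodicity under $(n,-m)$).
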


\begin{proof}
To a $(m,n)$-filter $\idl$ we associate the unique equivalent Dyck $(m,n)$-filter $\del$ obtained by translating $\idl$ so that it touches the line $(x,y)\cdot(m,n)=0$, but does not go below.
\end{proof}
\begin{lemma}\label{lem:connected}
There is a directed path in $\mathfrak{F}_m^n$ from the equivalence class $\widetilde{\bal}_m^n$ (containing the $(m,n)$-filters generated by a single level) to any other equivalence class.
\end{lemma}

\begin{proof}
Starting from $\widetilde{\bal}_m^n$, the directed graph $\mathfrak{F}_m^n$ contains a copy of the distributive lattice (whose Hasse diagram is thought of as a directed graph) consisting of the $(m,n)$-Dyck paths ordered by inclusion.  (Note that this is via the identification with Dyck paths which lie below the diagonal, not above it.)
\end{proof}

The following enumeration is a well-known application of the cycle lemma.

\begin{proposition}\label{thm:dyck_paths}  For $m$ and $n$ relatively prime,
\[\left|\EF_m^n\right| = \left|\EF_n^m\right| = \frac{1}{n+m}\binom{n+m}{n}.\]
\end{proposition}

\subsubsection{Balanced Filters}
\label{sec:balanced}

We define a second representative of the equivalence classes in $\EF_m^n$.  These objects appear to have been much less studied, and will allow us to relate $\EF_m^n$ and affine permutations.

\begin{definition}
We call an $(m,n)$-filter $\bal \in \FF_m^n$ satisfying
\[\sum m(\bal):= \sum_{ i \in m(\bal)} i = \binom{m+1}{2} \text{ and } \sum n(\bal):=\sum_{ j \in n(\bal)} j = \binom{n+1}{2}\]
a \defn{balanced $(m,n)$-filter}.  We write $\Bal_m^n$  for the set of balanced $(m,n)$-filters
\label{prop:balanced1}
\end{definition}

It is a simple check that this set is nonempty---it contains the $(m,n)$-filter $\bal_m^n$ generated by the points with level $\ell=\frac{1+m+n-mn}{2}$.

\begin{proposition}
\label{prop:balanced2}
Each equivalence class in $\EF_m^n$ contains a unique element of $\Bal_m^n$.  Furthermore, for $\bal \in \FF_m^n$, \[ \sum m(\bal) = \binom{m+1}{2} \text{ if and only if } \sum n(\bal) = \binom{n+1}{2}.\]
\end{proposition}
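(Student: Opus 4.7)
The plan is to deduce both halves of the proposition from a single quantity
\[ I(\idl) \;:=\; n \sum_{i \in m(\idl)} i \;-\; m \sum_{j \in n(\idl)} j \]
which I will show is constant on $\FF_m^n$, taking the value $n\binom{m+1}{2} - m\binom{n+1}{2}$.

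First I would verify two invariances. For \emph{translation invariance}: translating $\idl$ by $(x,y) \in \ZZ^2$ shifts every level by $xm + yn$ and merely re-indexes rows and columns, so $m(\idl + (x,y))$ and $n(\idl + (x,y))$ are obtained from $m(\idl)$ and $n(\idl)$ by adding $xm+yn$ to each entry; hence $\sum m$ changes by $m(xm+yn)$, $\sum n$ by $n(xm+yn)$, and $I$ is fixed. For \emph{edge invariance along $\mathfrak{F}_m^n$}: an edge removes a level $\ell^* \in \ell(\idl)$ that is minimal under the order generated by $+m$ and $+n$, so $\ell^* - m, \ell^* - n \notin \ell(\idl)$. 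Because $\ell^* - m \notin \ell(\idl)$, the level $\ell^*$ is exactly the entry of $m(\idl)$ in its residue class modulo $m$; after removal the new entry is $\ell^* + m$, which still lies in $\ell(\idl')$ by closure under $+m$. The same argument replaces the corresponding entry of $n(\idl)$ by one $n$ larger. Hence $\sum m$ increases by $m$ and $\sum n$ by $n$, leaving $I$ unchanged. By~\Cref{lem:connected} (traversing edges in both directions), these two invariances force $I$ to be constant on $\FF_m^n$.

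To evaluate the constant I would compute $I$ on the half-plane filter $\idl_0 := \{(i,j) : im + jn \geq 0\}$, for which $\ell(\idl_0) = \ZZ_{\geq 0}$; this gives $m(\idl_0) = \{0, 1, \dots, m-1\}$ and $n(\idl_0) = \{0, 1, \dots, n-1\}$, so $I(\idl_0) = n\binom{m}{2} - m\binom{n}{2}$, which equals $n\binom{m+1}{2} - m\binom{n+1}{2}$ via $\binom{k+1}{2} = \binom{k}{2} + k$.

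The proposition then drops out. Part (B) is the rearrangement of $n\sum m(\bal) - m\sum n(\bal) = n\binom{m+1}{2} - m\binom{n+1}{2}$. For part (A), given $\idl \in \FF_m^n$ set $s := \bigl(\binom{m+1}{2} - \sum m(\idl)\bigr)/m$; this is an integer because $m(\idl)$ is a complete residue system mod $m$, giving $\sum m(\idl) \equiv \binom{m}{2} \equiv \binom{m+1}{2} \pmod{m}$. Since $\gcd(m,n) = 1$, there exist $(x,y) \in \ZZ^2$ with $xm + yn = s$, and $\idl + (x,y)$ satisfies the $m$-side of balancedness, whence also the $n$-side by (B). For uniqueness, any two balanced translates of $\idl$ differ by $(kn, -km)$ for some $k$; but such translations preserve all levels, and level-closedness of $\idl$ then forces them to fix it. I expect the most delicate step to be the edge invariance above: one must carefully track which entries of $m(\idl)$ and $n(\idl)$ correspond to the removed level $\ell^*$ and confirm that $\ell^* + m$, $\ell^* + n$ really are the new minima in their residue classes.
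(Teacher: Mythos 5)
Your proof is correct, and it rests on the same two deltas as the paper's: translating a filter by $(x,y)$ shifts every level by $c = xm+yn$ and hence adds $mc$ to $\sum m(\idl)$ and $nc$ to $\sum n(\idl)$, while removing a minimal level adds $m$ and $n$ respectively; both arguments then invoke~\Cref{lem:connected}. The difference is in packaging. The paper works inductively from the base filter $\bal_m^n$, showing that the move ``remove a minimal level, then shift all levels down by $1$'' preserves the property of satisfying both balance conditions simultaneously, and then appeals to connectivity. You instead bundle the same deltas into the single invariant $I(\idl) = n\sum_{i \in m(\idl)} i - m\sum_{j \in n(\idl)} j$, evaluate it once on the half-plane filter $\idl_0$, and read off the iff, existence, and uniqueness as corollaries. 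Your version is more explicit where the paper's closing ``the conclusions now follow'' is terse: you spell out why checking one balance condition forces the other (via the fixed value of $I$), why $s := \bigl(\binom{m+1}{2} - \sum_{i\in m(\idl)}i\bigr)/m$ is an integer (because $m(\idl)$ is a transversal of residues mod $m$), and why the only level-preserving translations $(kn,-km)$ actually fix a level-closed filter, which gives uniqueness. The paper's proof is shorter; yours surfaces the invariant and closes the small gaps the paper leaves to the reader.
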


\begin{proof} We first show that any equivalence class in $\EF_m^n$ contains an element of $\Bal_m^n$. Let $\idl$ be a balanced filter. Removing a minimal element from $\idl$ to make a new filter $\idl'$ has the effect of adding $m$ to the sum of elements of $m(\idl)$ and the effect of adding $n$ to the sum of the elements of $n(\idl)$. Rebalancing, the $(m,n)$-filter defined by $\ell(\idl'')=\ell(\idl')-1$ therefore is also balanced. By starting with $\bal_m^n$ and applying~\Cref{lem:connected}, we conclude that it is possible to find a balanced filter in each equivalence class of $\EF_m^n$. 

Now, if $\bal \neq \bal'$ are in the same equivalence class, then $\sum m(\bal) \neq \sum m(\bal')$ and $\sum n(\bal) \neq \sum n(\bal')$. This shows than any element of a given equivalence class other than the balanced element we found above, satisfied neither that $\sum m(\bal) = \binom{m+1}{2}$ nor that $\sum n(\bal) = \binom{n+1}{2}$. This completes the proof of the proposition. \end{proof}

 The five balanced $(3,4)$-filters are illustrated in~\Cref{fig:balanced}.

\subsection{Filter Tuples}
\label{sec:tuples}

We define $(m,n)$-filter tuples as certain sequences of $(m,n)$-filters, and we explain in~\Cref{rem:park_pic} how $(m,n)$-filter tuples are in bijection with the usual definition of parking functions as labeled Dyck paths.

\begin{definition}
An \defn{$(m,n)$-filter tuple} $\prk$ is a tuple of $n{+}1$ $(m,n)$-filters \[\prk=\left(\prk^{(0)},\prk^{(1)},\ldots,\prk^{(n)}\right)\] such that:
\begin{itemize}
    \item for $0 \leq i < n$, $m(\prk^{(i+1)})$ is obtained from $m(\prk^{(i)})$ by removing some $p_{i} \in m(\prk^{(i)})$ and inserting $p_i+m$ (pictorially, as in~\Cref{def:directed_graph}, this means that $\prk^{(i+1)}$ is obtained from $\prk^{(i)}$ by removing a corner), and
    \item $m(\prk^{(0)})+n = m(\prk^{(n)})$ (pictorially, this means that $\prk^{(n)}$ is obtained by displacing $\prk^{(0)}$ one step upwards).
\end{itemize}
We write $\mathcal{T}_m^n$ for the set of all $(m,n)$-filter tuples and we say that two $(m,n)$-filters tuples $\prk_1$ and $\prk_2$ are \defn{equivalent} if $\prk_1^{(i)}=\prk_2^{(i)}+(x,y)$ for all $0\leq i \leq n$ and some fixed $(x,y) \in \mathbb{Z}\times\mathbb{Z}$.
\label{def:parking}
\end{definition}

\Cref{def:parking} is illustrated in~\Cref{fig:parking_functions}; the caption is explained in the next few paragraphs.
\begin{figure}[htb]
\[
  \raisebox{-0.5\height}{\scalebox{.6}{\includegraphics{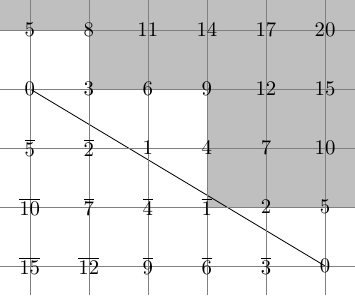}}} \xmapsto{3} \raisebox{-0.5\height}{\scalebox{.6}{\includegraphics{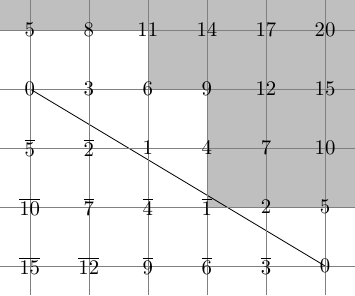}}} \xmapsto{\overline{1}} \raisebox{-0.5\height}{\scalebox{.6}{\includegraphics{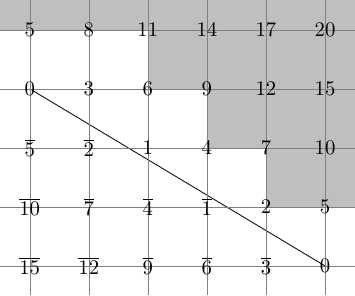}}} \xmapsto{2}\]
\[    \raisebox{-0.5\height}{\scalebox{.6}{\includegraphics{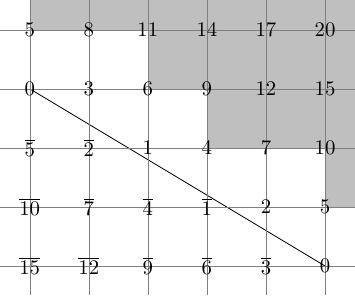}}} \xmapsto{5}\raisebox{-0.5\height}{\scalebox{.6}{\includegraphics{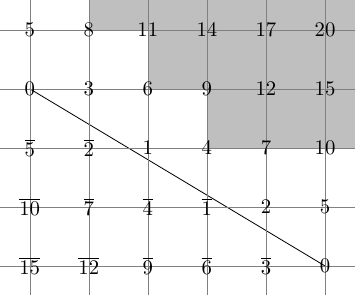}}} \xmapsto{6} \raisebox{-0.5\height}{\scalebox{.6}{\includegraphics{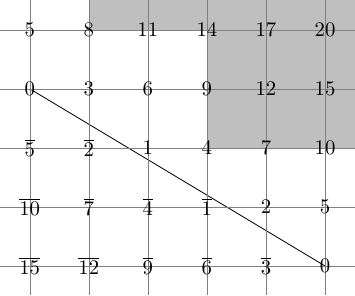}}}\phantom{\xmapsto{2}}\]
\caption{A balanced $(3,5)$-filter tuple $\prk$ with $n(\prk)=[3,-1,2,5,6]$ corresponding to the cycle consisting of the dotted red edges in \protect\Cref{fig:balanced2}.}
\label{fig:parking_functions}
\end{figure}

An $(m,n)$-filter tuple may be equivalently thought of as a cycle of length $n$ in the directed graph $\mathfrak{F}_n^m$ of~\Cref{def:directed_graph} with a choice of initial representative in the first equivalence class, as in~\Cref{rem:park_cycles}.

An $(m,n)$-filter tuple $\prk$ is specified by the sequence of the $n$ levels removed: \begin{equation}n(\prk) = [p_0,p_1,\ldots,p_{n-1}].\label{eq:n_of_park}\end{equation}
\Cref{def:parking} ensures that $n(\prk)$ is a permutation of $n(\prk^{(0)})$, such that levels in the same residue class modulo $m$ appear in increasing order.

\begin{example}
\Cref{fig:balanced2} illustrates a cycle of length $5$ in $\mathfrak{F}_5^3$: start at the vertex labeled by the balanced $(3,5)$-filter $\bal$ with $m(\bal)=[-1,3,4]$ and then follow the red edges.  This cycle corresponds to the $(3,5)$-filter tuple $\prk$ with $n(\prk)=[3,-1,2,5,6]$ in~\Cref{fig:parking_functions}.
\label{rem:park_cycles}
\end{example}

We call $\prk \in \mathcal{T}_m^n$ \defn{parking} if $\prk^{(0)} \in \Dyck_m^n$ and write $\PF_m^n$ for the set of all parking $(m,n)$-filter tuples.  We call $\prk \in \mathcal{T}_m^n$ \defn{balanced} if $\prk^{(0)}\in \Bal_m^n$ and write $\BF_m^n$ for the set of all balanced $(m,n)$-filter tuples.  \Cref{thm:dyck_paths,prop:balanced2} show that any $(m,n)$-filter tuple is equivalent to a unique  parking $(m,n)$-filter tuple and a unique balanced $(m,n)$-filter tuple.

\begin{remark}
We relate~\Cref{def:parking} to the definition of \defn{$(m,n)$-parking paths}---$(m,n)$-Dyck paths whose $n$ horizontal edges are labeled $1,2,\ldots,n$, such that levels in the same row increase from left to right.  Fix $\prk\in \PF_m^n$, so that $\prk^{(0)}$ may be thought of as an $(m,n)$-Dyck path by~\Cref{rem:dyck_pic}. Number each horizontal step in this path by the order in which its left endpoint is removed in $\prk$.  Since $\prk^{(i)}$ is an $(m,n)$-filter, points in the same row must be removed in order---this recovers the condition on levels for parking paths, as illustrated in~\Cref{fig:parking_function} (which corresponds to the parking $(m,n)$-filter tuple of~\Cref{fig:parking_functions}).  Thus, we may represent a parking $(m,n)$-filter as an $(m,n)$-parking path.
\label{rem:park_pic}
\end{remark}

\begin{figure}[htb]
\centering
\includegraphics{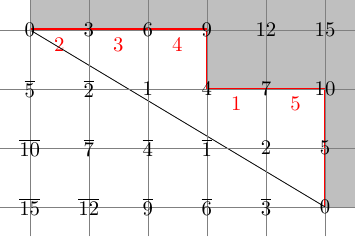}\hfill
\scalebox{0.9}{\includegraphics{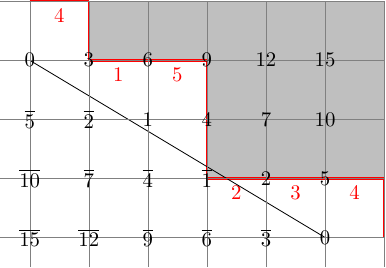}}
\caption{On the left is the parking $(3,5)$-filter tuple corresponding to the balanced $(3,5)$-filter tuple in~\Cref{fig:parking_functions}, encoded in the traditional manner as a Dyck path with labeled horizontal steps (the path is marked in red).  The labels record the order in which the points to their left were removed.  On the right is the corresponding balanced $(3,5)$-filter tuple.}
\label{fig:parking_function}
\end{figure}

The following enumerative result follows from the cycle lemma, and is given geometric meaning in~\Cref{sec:filters_and_sommers}.

\begin{proposition}[{\cite[Corollary 4]{armstrong2016rational},\cite{aval2015interlaced}}]
\label{thm:number_parking_functions}
For $m$, $n$ coprime,
    \[\left|\PF_m^n\right| = m^{n-1} \text{ and } \left|\PF_n^m\right| = n^{m-1}.\]
\end{proposition}

\section{The Zeta Map}
\label{sec:combinatorial_zeta}
After reviewing the state of the art for zeta maps in~\Cref{sec:zeta_history}, we use the combinatorial objects of~\Cref{sec:parking_functions} to define two (different) bijections between parking $(m,n)$-filter tuples and $(m,n)$-parking words (\Cref{def:parka,def:parkb})---the first map is trivially a bijection, but we only conclude that the second map is a bijection as a corollary of~\Cref{thm:park_char} in~\Cref{thm:b_is_bij}.  The composition of these two bijections defines the zeta map for rational parking words (\Cref{def:zetamap}).

In~\Cref{sec:sweep}, we show that our zeta map on rational words recovers Armstrong, Loehr, and Warrington's sweep map on rational Dyck paths using a canonical injection of Dyck paths inside parking paths.

\subsection{Context and History}
\label{sec:zeta_history}
The classical zeta map $\zeta$ is a bijection from $(n{+}1,n)$-Dyck paths to themselves developed by Garsia, Haglund, and Haiman to explain the equidistribution on Dyck paths of $(\mathsf{area},\mathsf{bounce})$ and $(\mathsf{dinv},\mathsf{area})$.  The statistic $\mathsf{bounce}$ is due to Haglund, while $\mathsf{dinv}$ is due to Haiman; we shall not review their definitions here.  This equidistribution expresses the agreement of the two formulas on the righthand side of the following combinatorial expansion of the Hilbert series of the \emph{alternating} subspace of the space of diagonal coinvariants~\cite{garsia2002proof,haiman2002vanishing,carlsson2015proof}:

\begin{align*}
\mathrm{Hilb}\left(\left(\CC[\xx_n,\mathbf{y}_n] / \langle \CC[\xx_n,\mathbf{y}_n]_+^{\Sym_n} \rangle\right)^\epsilon; q,t\right) &=\sum_{\mathsf{d} \in \DyckW_{n{+}1}^n} q^{\area(\mathsf{d})} t^{\bounce(\mathsf{d})}\\ &= \sum_{\mathsf{d} \in \DyckW_{n{+}1}^n} q^{\dinv(\mathsf{d})} t^{\area(\mathsf{d})},
\end{align*}
With the proper conventions\footnote{For consistency with its generalization to parking paths, we are using the inverse of the zeta map from~\cite[Theorem 3.15]{haglund2008q}.}, the map $\zeta$ explains the equidistribution of these statistics, in the sense that $\area(\mathsf{d})=\dinv(\zeta(\mathsf{d}))$ and $\bounce(\mathsf{d}) = \area(\zeta(\mathsf{d}))$.

From the point of view of lattice path combinatorics, the Dyck paths encoding the Hilbert series of the alternating subspace of the space of diagonal coinvariants are much simpler than the parking paths encoding the full Hilbert series of the space of diagonal coinvariants.  Presumably due to this difference in complexity, the definition and study of the zeta map was restricted to Dyck paths at first~\cite{haglund2003conjectured,garsia2002proof}, and its extension by Haglund and Loehr~\cite{haglund2005conjectured}\footnote{Although this bijection is between two slightly different manifestations of parking paths.} and by Loehr and Remmel~\cite{loehr2004conjectured} to parking paths only came later:

\begin{align*}
\mathrm{Hilb}\left(\CC[\xx_n,\mathbf{y}_n] / \langle \CC[\xx_n,\mathbf{y}_n]_+^{\Sym_n} \rangle; q,t\right) &=\sum_{\pp \in \Park_{n{+}1}^n} q^{\area(\pp)} t^{\mathsf{pmaj}(\pp)}\\ &= \sum_{\pp \in \Park_{n{+}1}^n} q^{\dinv(\pp)} t^{\area(\pp)},
\end{align*}
where $\mathsf{pmaj}$ is a generalization of $\mathsf{bounce}$, $\area(\pp)=\dinv(\zeta(\pp))$, and $\mathsf{pmaj}(\pp) = \area(\zeta(\pp))$.  When restricted to Dyck paths, this result generalizes the zeta map on Dyck paths~\cite[Exercise 5.7]{haglund2008q}; see also our~\Cref{prop:sweep_and_zeta}.

As Dyck paths and parking paths were generalized to the Fuss $(kn+1,n)$, Dogolon $(kn-1,n)$, and rational $(m,n)$ cases, extensions of the zeta map were again first defined on Dyck paths, and only later for parking paths.  The definition of zeta on rational parking words turns out to be surprisingly simple, as we show in~\Cref{def:parka,def:parkb,def:zetamap}.  This definition appears in~\cite{gorsky2016affine}---but in a different language that we postpone to~\Cref{sec:sommers_parking_words}.

The table in~\Cref{fig:table_zeta} contains a historical summary of the definitions of zeta, where for brevity we have suppressed some details as to the exact generality of the maps involved---in the column with heading ``Type,'' we use ``Dyck'' or ``Parking'' to refer to the unlabeled or labeled case of lattice paths, respectively.  (We recommend~\cite{armstrong2015sweep} for a thorough survey of the literature on zeta maps defined on lattice paths, at least when the dimensions of the bounding rectangle are coprime.)

\begin{figure}[htb]
\centering
\begin{tabular}{|l|c|c|c|r|}\hline
 Authors & Reference & Type & Generality & Proof of Bijectivity\\ \hline \hline
\begin{tabular}{l} Garsia \\ Haiman \\ Haglund \end{tabular} & \cite{garsia2002proof} & Dyck & $(n{+}1,n)$ &  \cite{garsia2002proof} \\ \hline
\begin{tabular}{l}Loehr\end{tabular} & \cite{loehr2005conjectured} & Dyck & $(kn{+}1,n)$ &  \cite{loehr2005conjectured}\\ \hline
\begin{tabular}{l} Haglund \\  Loehr\end{tabular} & \cite{haglund2005conjectured} & Parking & $(n{+}1,n)$ &  \cite{haglund2005conjectured} \\ \hline
\begin{tabular}{l} Gorsky \\  Mazin \end{tabular} & \begin{tabular}{c}\cite{gorsky2013compactified}\\\cite{gorsky2014compactified}\end{tabular} & Dyck & coprime $(m,n)$ &  \begin{tabular}{r} \cite{gorsky2013compactified} for $(kn{\pm}1,n)$ \\  \cite{thomas2015sweeping} \\ {\bf This paper} \end{tabular}  \\ \hline
\begin{tabular}{l} Amstrong \\ Loehr \\  Warrington \end{tabular} & \cite{armstrong2015sweep} &  Dyck & \begin{tabular}{c} $N$-dim. box\\ (integer labels) \end{tabular} & \cite{thomas2015sweeping} \\ \hline
\begin{tabular}{l} Thomas \\ Williams \end{tabular} & \cite{thomas2015sweeping} & Dyck & \begin{tabular}{c} $N$-dim. box \\ (modular labels) \end{tabular} & \cite{thomas2015sweeping} \\ \hline
\begin{tabular}{l} Gorsky \\ Mazin \\  Vazirani \end{tabular} & \cite{gorsky2016affine} & Parking & coprime $(m,n)$ & \begin{tabular}{r} \cite{gorsky2013compactified} for $(kn{\pm}1,n)$ \\  {\bf This paper} \end{tabular}  \\ \hline
\begin{tabular}{l} Gorsky \\ Mazin \\  Vazirani \end{tabular} & \cite{gorsky2017rational} & Dyck & $(m,n)$ & \cite{gorsky2017rational} \\ \hline
\end{tabular}
\caption{A brief overview of various definitions and work on zeta maps.}
\label{fig:table_zeta}
\end{figure}
\medskip

\subsection{The Zeta Map}
\label{sec:zeta}

We define the zeta map using two bijections $A,B$ from parking $(m,n)$-filter tuples to $(m,n)$-parking words.  The zeta map is then defined to be the map $\zeta := B \circ A^{-1}$.

Following Gorsky, Mazin, and Vazirani, the $q$ and $t$ statistics may be read off these $(m,n)$-parking words~\cite{armstrong2013hyperplane,gorsky2016affine}: \begin{align*}\area(\prk)&:=\frac{(n-1)(m-1)}{2}-\sum_{i=0}^{n-1} A(\prk)_i, \\ \dinv(\prk)&:=\frac{(n-1)(m-1)}{2}-\sum_{i=0}^{n-1} B(\prk)_i.\end{align*}

\subsubsection{Area (A)}
\label{sec:mapa}

Our first map is a simple application of the interpretation in~\Cref{rem:park_pic} of an $(m,n)$-filter tuple as an $(m,n)$-parking path.  This will be useful again in~\Cref{sec:anderson2} in the context of affine permutations.

\begin{definition}
Define $A: \PF_m^n \to \Park_m^n$ to be the $(m,n)$-parking word recording the column lengths (in the order of the edge labels) of the $(m,n)$-parking path associated to $\prk$ by~\Cref{rem:park_pic}.
\label{def:parka}
\end{definition}

It is easy to see that $A$ may be equivalently defined by \[n(\prk)=[p_1,p_2,\ldots,p_n] \xmapsto{A} \left[ap_1, ap_2,\ldots, ap_n \right] \mod m,\] where $a n = -1 \mod m$. 

\begin{example} \label{ex:mapa}
The parking $(3,5)$-filter tuple $\prk \in \PF_3^5$ encoded by the $(3,5)$-parking path in~\Cref{fig:parking_function} is mapped to the $(3,5)$-parking word $A(\prk)=10001$ (there is one gray box in each column containing the horizontal edges with labels 1 and 5, and no gray boxes in the other columns).  We may also compute it using the word $n(\prk)$ from~\Cref{fig:parking_function}:
\[n(\prk) = [4,0,3,6,7] \xmapsto{A} [1\cdot 4,1\cdot 0,1\cdot 3,1\cdot 6,1\cdot 7] \mod 3 = [1,0,0,0,1],\] since $1 \cdot 5 = -1 \mod 3$.
We compute $\area(\prk) = \frac{2\cdot 4}{2}-\left(1+0+0+0+1\right) = 2.$

On the other hand, since $3 \cdot 3 = -1 \mod 5$, we compute the  $(5,3)$-parking word  for the element $\prk \in \PF_5^3$ with $n(\prk)=[0,4,5]$ to be:
\[n(\prk) = [0,4,5] \xmapsto{A} [3\cdot 0,3\cdot 4,3\cdot 5] \mod 5 = [0,2,0].\]
\end{example}

It is immediate from~\Cref{rem:park_pic} that~\Cref{def:parka} is a bijection from $\PF_m^n$ to $\Park_m^n$.

\subsubsection{Dinv ($B$)}
\label{sec:mapb}
Our second map is more subtle, requiring an application of \Cref{thm:park_char} to prove that it is well-defined.
\begin{definition}\label{def:parkb}
Define $B: \PF_m^n \to [m]^n$ to be the word $\w=\w_1\cdots \w_n \in [m]^n$ where $\w_{i+1}=j$ if $p_{i}$ is the $j$th smallest number in $m(\prk^{(i)})$.  That is, $B(\prk)$ is defined by recording the number of letters in $m(\prk^{(i)})$ strictly less than $p_{i}$ for $0\leq i < n$ (we call this number the \defn{position} of $p_i$ in $m(\prk^{(i)})$).
\end{definition}

\begin{example}\label{ex:b1}
As in~\Cref{rem:park_cycles}, we compute $m(\prk^{(i)})$ for each $(3,5)$-filter in~\Cref{fig:parking_functions} to be \[ [-1,\mathbf{3},4] \to [\mathbf{-1},4,6] \to [\mathbf{2},4,6] \to [4,\mathbf{5},6] \to [4,\mathbf{6},8] \to [4,8,9],\] where we haven't rebalanced (but note that this doesn't change relative order, and so won't change the image of $B$).  Recording the position of the elements removed (marked in bold above) gives the $(3,5)$-parking word $B(\prk)=10011.$
As with $\area$, we compute $\dinv(\prk) = \frac{2\cdot 4}{2}-\left(1+0+0+1+1\right) = 1.$
\end{example}

It is not obvious that~\Cref{def:parkb} really does produce $(m,n)$-parking words.

\begin{theorem}
The map $B$ is a bijection from  $\PF_m^n$ to $\Park_m^n$.
\label{thm:b_is_bij}
\end{theorem}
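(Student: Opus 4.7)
The plan is to set up a geometric dictionary between the filter updates comprising $\prk$ and the piecewise-linear action of $B(\prk)$ on $V^m$ from~\Cref{def:action}, and then to invoke~\Cref{thm:park_char} twice: once to show that $B$ lands in $\Park_m^n$, and once (via the unique-fixed-point clause valid when $\gcd(m,n)=1$) for injectivity. A cardinality match then upgrades injectivity to bijectivity.

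Concretely, I would view each $m(\prk^{(i)})$ as the sorted $m$-tuple of row-minima (these entries are automatically distinct because they represent all residue classes modulo $m$) and set $\xx^{(i)} := m(\prk^{(i)}) - i\mathbbm{1}_m$. By~\Cref{def:parkb}, the passage from $\prk^{(i-1)}$ to $\prk^{(i)}$ increments the $\w_i$-th smallest entry of $m(\prk^{(i-1)})$ by $m$, where $\w := B(\prk)$; after subtracting the additional $\mathbbm{1}_m$, this matches exactly the action of the letter $\w_i$ on $\xx^{(i-1)}$, so $\xx^{(i)} = \w_i(\xx^{(i-1)})$. Since $\prk$ is a filter tuple we have $m(\prk^{(n)}) = m(\prk^{(0)}) + n\mathbbm{1}_m$, and therefore $\xx^{(n)} = \xx^{(0)}$, i.e., the point $\xx^{(0)}$ is a fixed point of $\w$ in the relevant $V^m_t$. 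Since the action commutes with translation by multiples of $\mathbbm{1}_m$, a corresponding fixed point exists in $V^m = V^m_0$, and~\Cref{thm:park_char} then forces $\w \in \Park_m^n$.

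For injectivity, suppose $B(\prk_1) = B(\prk_2) = \w$. Both $m(\prk_1^{(0)})$ and $m(\prk_2^{(0)})$ arise as $\xx^{(0)}$ for their respective tuples, hence are fixed points of $\w$. Since $\gcd(m,n) = 1$, the unique-fixed-point clause of~\Cref{thm:park_char} together with $\mathbbm{1}_m$-equivariance of the action implies that the fixed points of $\w$ in the sorted cone form a single $\mathbbm{1}_m$-orbit; the Dyck condition $\prk_j^{(0)} \in \Dyck_m^n$ pins down the orbit representative via $\min m(\prk_j^{(0)}) = 0$, so $m(\prk_1^{(0)}) = m(\prk_2^{(0)})$. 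Because an $(m,n)$-filter is determined by its row-minima set, $\prk_1^{(0)} = \prk_2^{(0)}$, and the common word $\w$ then determines the rest of each sequence, giving $\prk_1 = \prk_2$. Combining injectivity with $|\PF_m^n| = m^{n-1} = |\Park_m^n|$ from~\Cref{thm:number_parking_functions} completes the proof. The main obstacle is nailing down the dictionary in the second paragraph: one must carefully verify that the discrete ``replace $p_i$ by $p_i+m$'' step on the sorted list $m(\prk^{(i-1)})$ coincides, after the $-\mathbbm{1}_m$ rebalancing, with the resorting-based $V^m$-action of~\Cref{def:action}. Once that identification is secure, both the image statement and the injectivity are immediate consequences of the fixed-point characterization.
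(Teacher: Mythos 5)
Your proposal is correct and follows essentially the same line of argument as the paper: identify the row-minima sequence $m(\prk^{(i)})$ (suitably rebalanced) with the orbit of a point of $V^m$ under the letters of $B(\prk)$, deduce that $B(\prk)$ has a fixed point and hence lies in $\Park_m^n$ by Theorem~\ref{thm:park_char}, and then use uniqueness of the fixed point in the coprime case to recover $\prk$ from $B(\prk)$. The only cosmetic difference is at the final step: you invoke the cardinality $|\PF_m^n| = m^{n-1} = |\Park_m^n|$ to upgrade injectivity to bijectivity, whereas the paper instead cites the already-established bijection $A : \PF_m^n \to \Park_m^n$—these are interchangeable, since the paper's count $|\Park_m^n| = m^{n-1}$ is itself obtained from the bijection $A$.
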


\begin{proof}
Let $\prk\in\PF_m^n$. For $0 \leq i\leq n$, we define a point $\xx^{(i)} \in V_0^m$ by $\mathbf{x}^{(i)} = m(\prk^{(i)}) $, and adding a multiple of $\mathbbm{1}$ so that the sum of the elements in $\xx^{(i)}$ is zero (since every element in $ m(\prk^{(i)})$  changes by the same amount, their relative order is preserved).  So the action of $B(\prk)$ on $\xx^{(0)} \in V_0^m$ (as defined in~\Cref{sec:new_char}) is recorded by the sequences $\xx^{(i)}$.  Finally, $\xx^{(0)} = \xx^{(n)}$ because $\prk$ is a parking $(m,n)$-tuple.  In particular, we have shown that the word $B(\prk)$ has a fixed point. Now~\Cref{thm:park_char} tells us that $B(\prk)$ is a parking word.

Further,~\Cref{thm:park_char} tells us that the fixed point of $B(\prk)$ is unique.  Therefore, from 
$B(\prk)$, we can identify its unique fixed point $\xx^{(0)}$, from which we can reconstruct 
$\xx^{(i)}$ for all $i$, and thus $\prk^{(i)}$ for all $i$.  That is to say, from $B(\prk)$, we can reconstruct $\prk$.  This implies that the map $B$ is an injection from $\PF_m^n$ to $\Park_m^n$.  We have already established that the map $A$ is a bijection between these two sets, so the fact that $B$ is an injection means that it must also be surjective.
\end{proof}

Given an $(m,n)$-filter tuple $\prk$, \emph{the} fixed point for $B(\prk)$ in $V_0^m$ is the word $m(\prk^{(0)})$---up to addition of a multiple of $\mathbbm{1}$.

\begin{example}
Continuing~\Cref{ex:b1} (and recalling~\Cref{rem:park_cycles}), balancing each $(3,5)$-filter $\prk^{(i)}$ gives the sequence of $m(\prk^{(i)})$
\[ [-1,\mathbf{3},4] \to [\mathbf{-2},3,5] \to [\mathbf{0},2,4] \to [1,\mathbf{2},3] \to [0,\mathbf{2},4]\to [-1,3,4].\]  Balancing adds the same amount to each element, and thinking of $m(\prk^{(0)})$ as an element of $V_6^3$, we observe that $m(\prk^{(0)})$ is a fixed point for the action of $B(\prk)=10011$.
\end{example}

\subsubsection{The Zeta Map ($A \mapsto B$)}

The zeta map $\zeta$ sends the first method of associating an $(m,n)$-parking word to a parking $(m,n)$-filter tuple in~\Cref{def:parka} to the second in~\Cref{def:parkb}.
By~\Cref{thm:b_is_bij}, we conclude that $\zeta$ is a bijection.

\begin{definition}
\label{def:zetamap}
The \defn{zeta map} is the bijection from $\Park_m^n$ to itself defined by
\begin{align*}
\zeta: \Park_m^n &\to \Park_m^n \\
\mathsf{p} & \mapsto B \circ A^{-1}(\mathsf{p})
\end{align*}
\end{definition}

Examples are illustrated in~\Cref{fig:zeta34,fig:zeta35}.  The grid in~\Cref{fig:zeta34} gives the expansions of the $q,t$-Catalan and parking polynomials:
\begin{align*}
\sum_{\del \in \Dyck_4^3} q^{\area(\del)} t^{\dinv(\del)}  &= q^3+q^2t+qt+qt^2+t^3,\\
\sum_{\prk \in \PF_4^3} q^{\area(\prk)} t^{\dinv(\prk)} &= q^3+2q^2+q^2t+2q+3qt+qt^2+1+2t+2t^2+t^3.
\end{align*}

\renewcommand{\arraystretch}{1.2}

\begin{figure}[htb]
\centering
\begin{tabular}{ccc|ccc|ccc}
$n(\prk)$ & $A(\prk)$ & $B(\prk)$ & $n(\prk)$ & $A(\prk)$ & $B(\prk)$ & $n(\prk)$ & $A(\prk)$ & $B(\prk)$ \\ \hline \hline
123 \cellcolor{lightgray} & 012 \cellcolor{lightgray} & 000 \cellcolor{lightgray} & $\overline{1}34$ \cellcolor{lightgray} & 001 \cellcolor{lightgray} & 011 \cellcolor{lightgray} & 015 \cellcolor{lightgray} & 011 \cellcolor{lightgray} & 002 \cellcolor{lightgray} \\
132 & 021 & 010 & $\overline{1}43$ & 010 & 021 & 105 & 101 & 102 \\
213 & 102 & 100 & $4\overline{1}3$ & 100 & 201 & 150 & 110 & 120 \\ \cline{4-9}
231 & 120 & 110 & 024 \cellcolor{lightgray} & 020 \cellcolor{lightgray} & 001 \cellcolor{lightgray} & $\overline{2}26$ \cellcolor{lightgray} & 000 \cellcolor{lightgray} & 012 \cellcolor{lightgray} \\
312 & 201 & 200 & 042 & 002 & 020 & & \\
321 & 210 & 210 & 204 & 200 & 101 & &
\end{tabular}
\begin{align*}
\begin{array}{cc}
\begin{array}{c|cccc}
 & 1 & q & q^2 & q^3 \\ \hline
 1 & 0 & 0 & 0 & 1 \\
 t& 0 & 1 & 1 & 0 \\
 t^2& 0 & 1 & 0 & 0 \\
 t^3& 1 & 0 & 0 & 0 \\
\end{array}
&\begin{array}{c|cccc}
 & 1 & q & q^2 & q^3 \\ \hline
 1 & 1 & 2 & 2 & 1 \\
  t& 2 & 3 & 1 & 0 \\
 t^2&2 & 1 & 0 & 0 \\
  t^3&1 & 0 & 0 & 0 \\
\end{array}
\end{array}
\end{align*}
\caption{The zeta map on $\Park_4^3$, along with the $q,t$-Catalan and parking polynomials.  The rows shaded in gray correspond to the canonical embedding of $\Dyck_4^3$ in $\PF_4^3$ from~\Cref{rem:embed}.  The grids represent the $q,t$-Catalan and parking polynomial---the number in the column labeled $q^i$ and $t^j$ is the coefficient of $q^i t^j$ in the corresponding polynomial $\sum_{\del \in \Dyck_4^3} q^{\area(\del)} t^{\dinv(\del)}$ or $\sum_{\prk \in \PF_4^3} q^{\area(\prk)} t^{\dinv(\prk)}$.}
\label{fig:zeta34}
\end{figure}

\begin{figure}[htb]
\centering
\begin{tabular}{ccc|ccc|ccc|ccc}
$n(\prk)$ & $A(\prk)$ & $B(\prk)$ & $n(\prk)$ &  $A(\prk)$ & $B(\prk)$ & $n(\prk)$ &  $A(\prk)$ & $B(\prk)$ & $n(\prk)$ &  $A(\prk)$ & $B(\prk)$ \\ \hline \hline
123 \cellcolor{lightgray}& 031 \cellcolor{lightgray} & 000 \cellcolor{lightgray} & 024 \cellcolor{lightgray} & 012 \cellcolor{lightgray} & 001 \cellcolor{lightgray} & $\overline{2}35$ \cellcolor{lightgray} & 001 \cellcolor{lightgray} & 012 \cellcolor{lightgray} & $015$ \cellcolor{lightgray} & 030 \cellcolor{lightgray} & 002 \cellcolor{lightgray}\\
132 & 013 & 010 & 042 & 021 & 020 & $\overline{2}53$ & 010 & 031 & $051$ & 003 & 030\\
312 & 103 & 200 & 204 & 102 & 101 & $5\overline{2}3$ & 100 & 301 & $105$ & 300 & 102\\ \cline{7-12}
321 & 130 & 210 & 240 & 120 & 120 & $\overline{1}34$ \cellcolor{lightgray} & 020 \cellcolor{lightgray} & 011 \cellcolor{lightgray} & $\overline{1}16$ \cellcolor{lightgray} & 011 \cellcolor{lightgray} & 003 \cellcolor{lightgray}\\
213 & 301 & 100 & 402 & 201 & 300 & $\overline{1}43$ & 002 & 021 & $1\overline{1}6$ & 101 & 103\\
231 & 310 & 110 & 420 & 210 & 310 & $3\overline{1}4$ & 200 & 201 & $16\overline{1}$ & 110 & 130\\
\hline
&&&&&& & && $\overline{3}27$ \cellcolor{lightgray} & 000 \cellcolor{lightgray} & 013       \cellcolor{lightgray}
\end{tabular}
\begin{align*}
\begin{array}{cc}
\begin{array}{c|ccccc}
& 1 & q & q^2 & q^3 & q^4 \\ \hline
1 & 0 & 0 & 0 & 0 & 1 \\
t & 0 & 0 & 1 & 1 & 0 \\
t^2& 0 & 1 & 1 & 0 & 0 \\
t^3& 0 & 1 & 0 & 0 & 0 \\
t^4& 1 & 0 & 0 & 0 & 0 \\
\end{array} &
\begin{array}{c|ccccc}
& 1 & q & q^2 & q^3 & q^4 \\ \hline
1& 0 & 1 & 2 & 2 & 1 \\
t& 1 & 4 & 3 & 1 & 0 \\
t^2& 2 & 3 & 1 & 0 & 0 \\
t^3& 2 & 1 & 0 & 0 & 0 \\
t^4& 1 & 0 & 0 & 0 & 0 \\
\end{array}
\end{array}
\end{align*}
\caption{The zeta map on $\Park_5^3$.   The rows shaded in gray correspond to the canonical embedding of $\Dyck_5^3$ in $\PF_5^3$ from~\Cref{rem:embed}.  The grids represent the $q,t$-Catalan and parking polynomials, as in~\Cref{fig:zeta34}.}
\label{fig:zeta35}
\end{figure}


\subsection{The Sweep Map}
\label{sec:sweep}
In this section, we relate the zeta map on $(m,n)$-parking words to the sweep map on $(m,n)$-Dyck paths.

Having fixed $m$ and $n$ coprime, define the \defn{level} of a step of a lattice path in $\ZZ \times \ZZ$ to be the level of its north/west endpoint.  In~\cite{armstrong2015sweep}, Armstrong, Loehr, and Warrington defined the \defn{sweep map} on $(m,n)$-Dyck paths by \emph{sorting} the steps of a given path by their levels, that is to say, we reorder the steps of the path by increasing order of level.\footnote{This is a special case of the general definition of the sweep map, which is on general lattice paths in an $N$-dimensional box.}  See~\Cref{fig:sweep} for an example.  One can visualize this procedure geometrically as a \emph{sweep} of the line ${\mathcal H}_{\mathbf{a},k}:=\{\mathbf{x}:\mathbf{x} \cdot (m,n)=k\}$ up from  $k=0$ to $k=\infty$, as illustrated in~\Cref{fig:ratpaths} for $(m,n)=(4,7)$.

\begin{figure}[htb]
\centering
\raisebox{-0.5\height}{\scalebox{.6}{\includegraphics{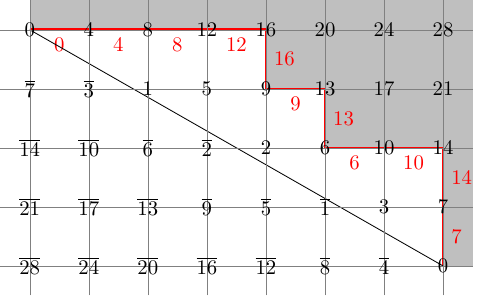}}} $\xmapsto{\swg}$ \raisebox{-0.5\height}{\scalebox{.6}{\includegraphics{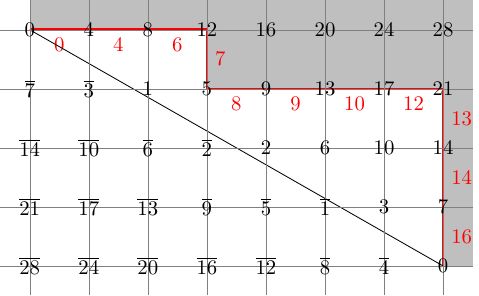}}}
\caption{The $(4,7)$-filters corresponding to a path $\del$ (left) and the corresponding path $\swg(\del)$ (right).  The horizontal steps of $\del$ are labeled by the level to their west, while the vertical steps are labeled by the level to their north; we have preserved these labels on the steps of $\swg(\del)$.  To form the path $\swg(\del)$, the steps of the path $\del$ are rearranged according to the order in which they are encountered by a line of slope $-4/7$ sweeping up from below.  Compare with~\Cref{fig:ratpaths}.}
\label{fig:sweep}
\end{figure}
\begin{figure}[htb]
\begin{alignat*}{4}
\framebox{\begin{tabular}{c}\raisebox{-0.5\height}{\reflectbox{\includegraphics[width=.12\linewidth]{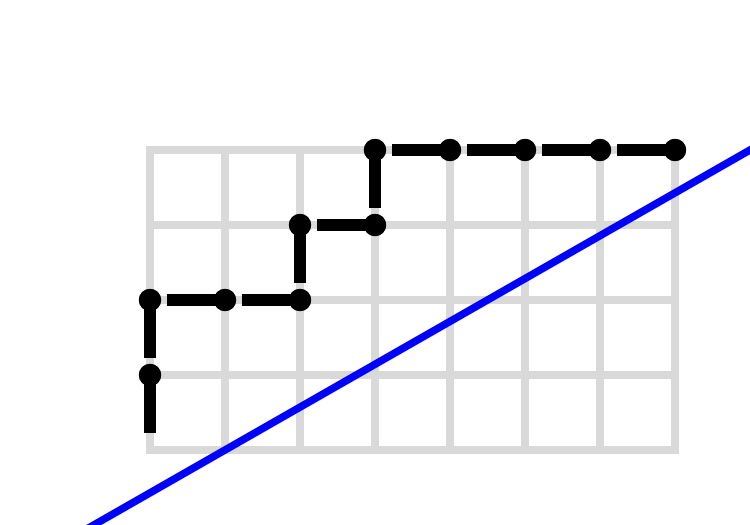}}\reflectbox{\includegraphics[width=.12\linewidth]{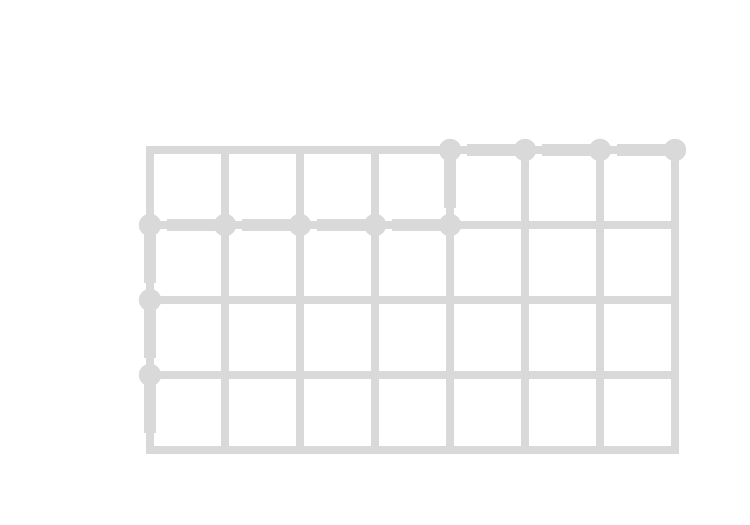}}} \\ \textcolor{red}{0} \ 6 \ 7 \ 9 \end{tabular}} &&\to
\framebox{\begin{tabular}{c}\raisebox{-0.5\height}{\reflectbox{\includegraphics[width=.12\linewidth]{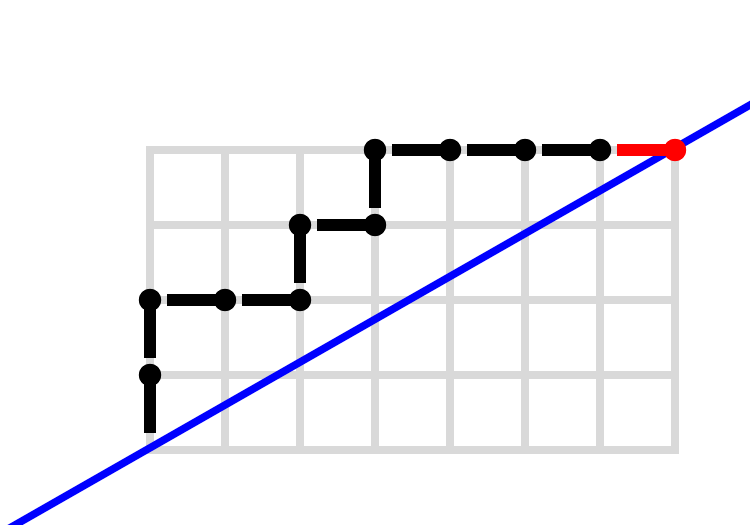}}\reflectbox{\includegraphics[width=.12\linewidth]{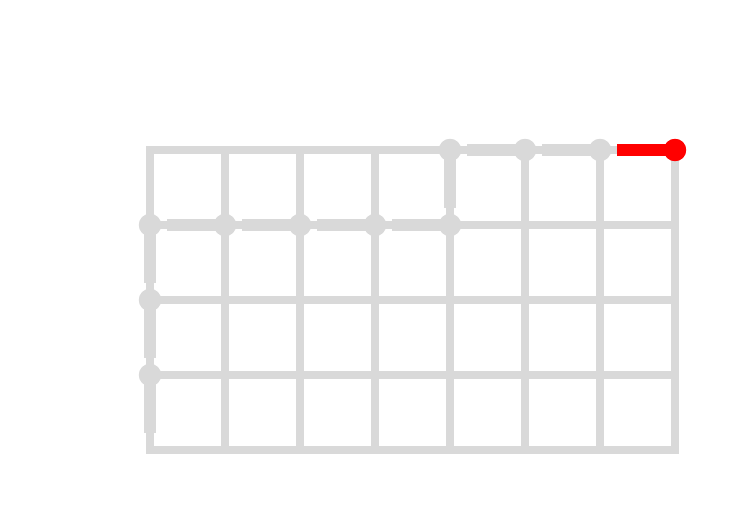}}}\\ \textcolor{red}{4} \ 6 \ 7 \ 9\end{tabular}} &&\to
\framebox{\begin{tabular}{c}\raisebox{-0.5\height}{\reflectbox{\includegraphics[width=.12\linewidth]{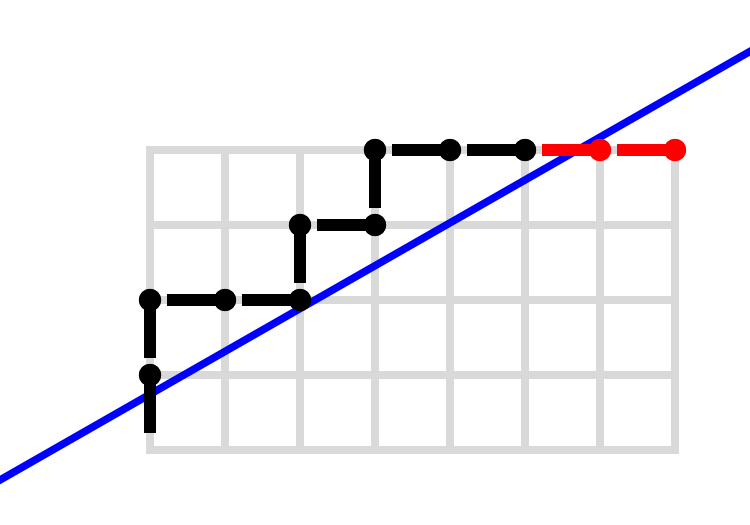}}\reflectbox{\includegraphics[width=.12\linewidth]{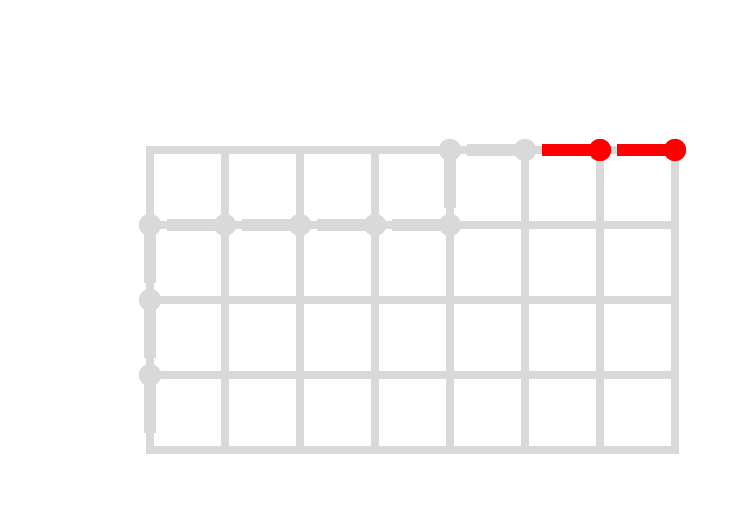}}}\\ \textcolor{red}{6} \ 7 \ 8 \ 9\end{tabular}} &&\to \\
\framebox{\begin{tabular}{c}\raisebox{-0.5\height}{\reflectbox{\includegraphics[width=.12\linewidth]{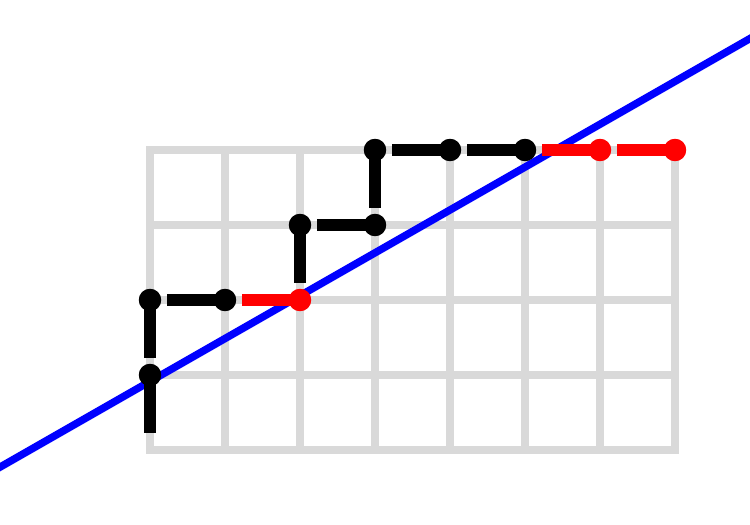}}\reflectbox{\includegraphics[width=.12\linewidth]{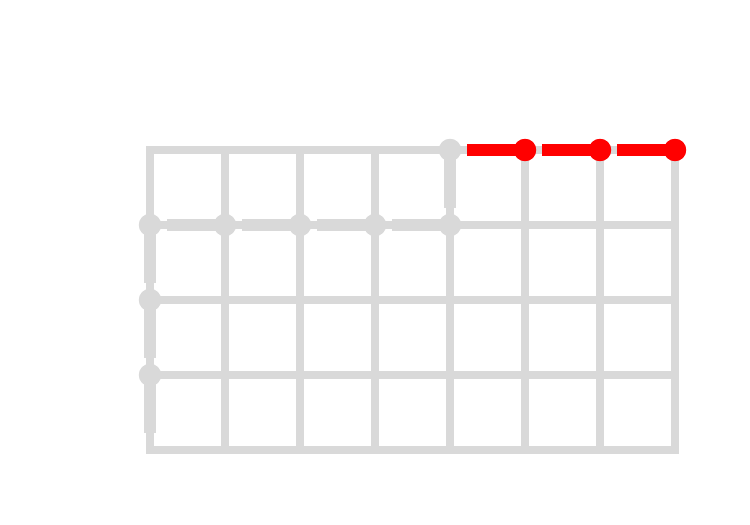}}}\\ \textcolor{red}{\bf 7} \ 8 \ 9 \ 10\end{tabular}} &&\to
\framebox{\begin{tabular}{c}\raisebox{-0.5\height}{\reflectbox{\includegraphics[width=.12\linewidth]{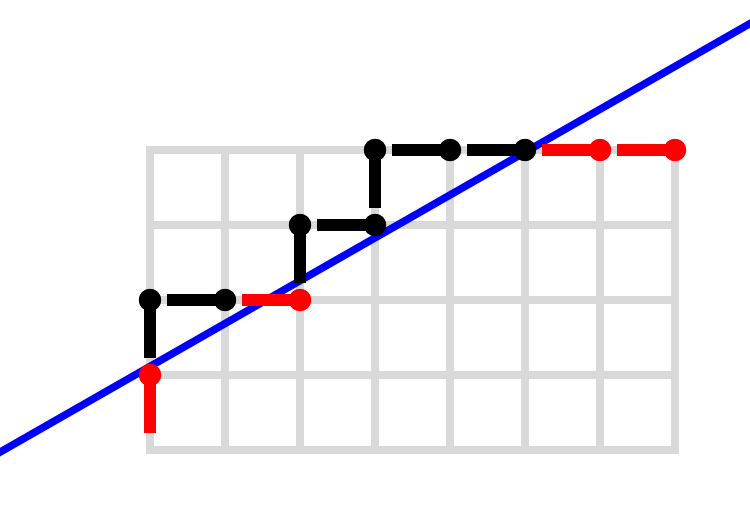}}\reflectbox{\includegraphics[width=.12\linewidth]{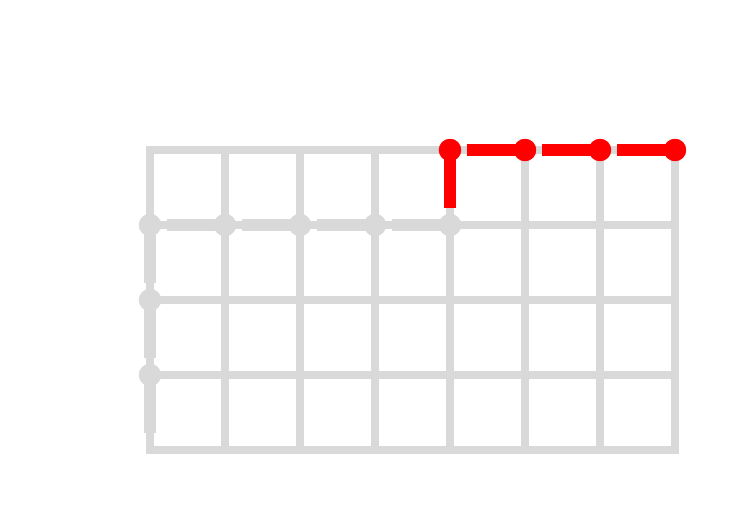}}} \\ {\bf 7} \ \textcolor{red}{8} \ 9 \ 10\end{tabular}} &&\to
\framebox{\begin{tabular}{c}\raisebox{-0.5\height}{\reflectbox{\includegraphics[width=.12\linewidth]{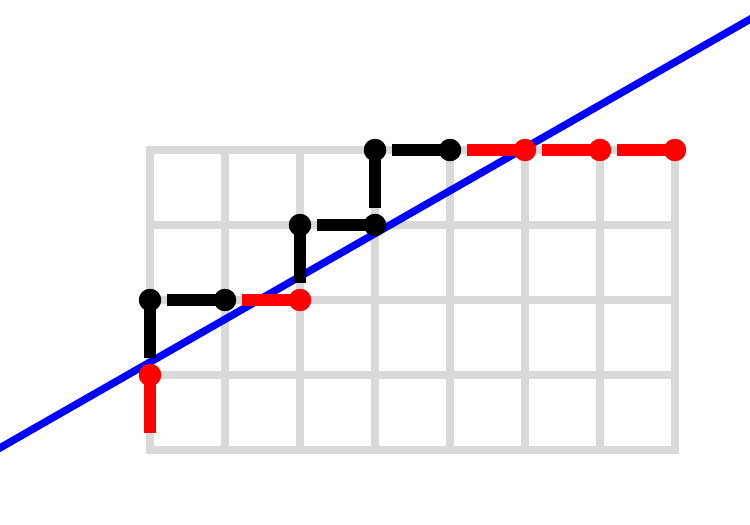}}\reflectbox{\includegraphics[width=.12\linewidth]{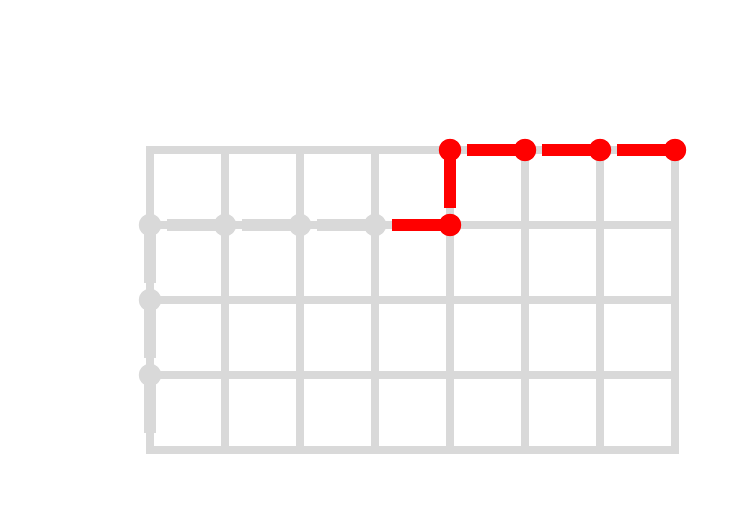}}}\\ {\bf 7} \ \textcolor{red}{9} \ 10 \ 12\end{tabular}} &&\to \\
\framebox{\begin{tabular}{c}\raisebox{-0.5\height}{\reflectbox{\includegraphics[width=.12\linewidth]{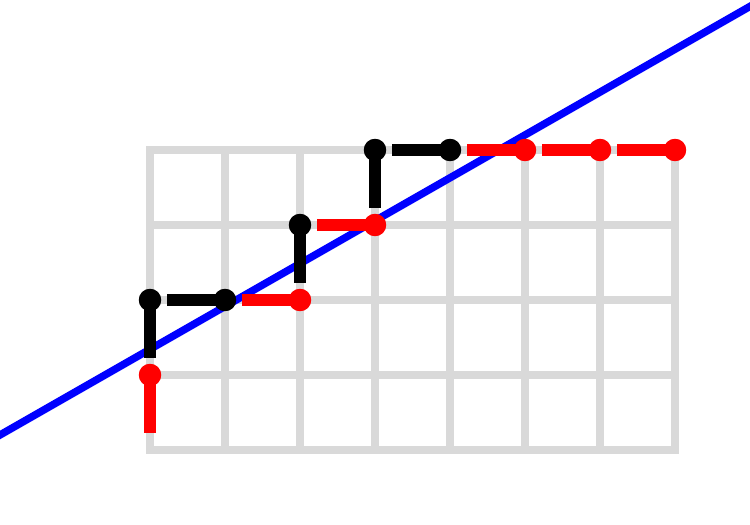}}\reflectbox{\includegraphics[width=.12\linewidth]{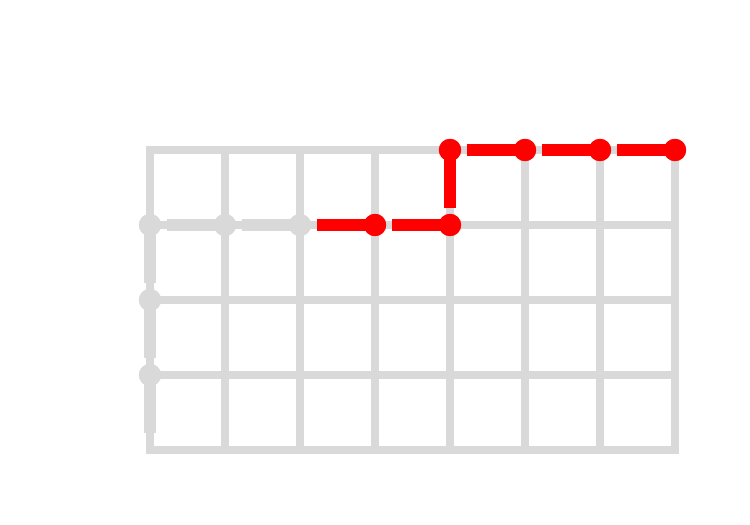}}}\\ {\bf 7} \ \textcolor{red}{10} \ 12 \ {13}\end{tabular}} &&\to
\framebox{\begin{tabular}{c}\raisebox{-0.5\height}{\reflectbox{\includegraphics[width=.12\linewidth]{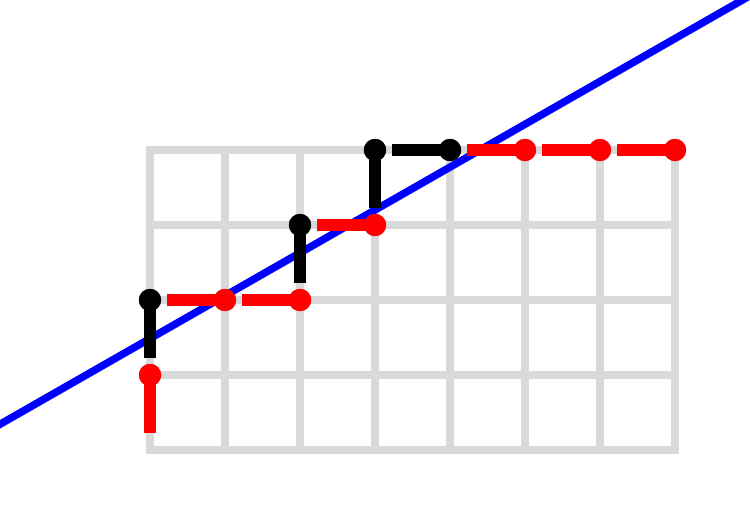}}\reflectbox{\includegraphics[width=.12\linewidth]{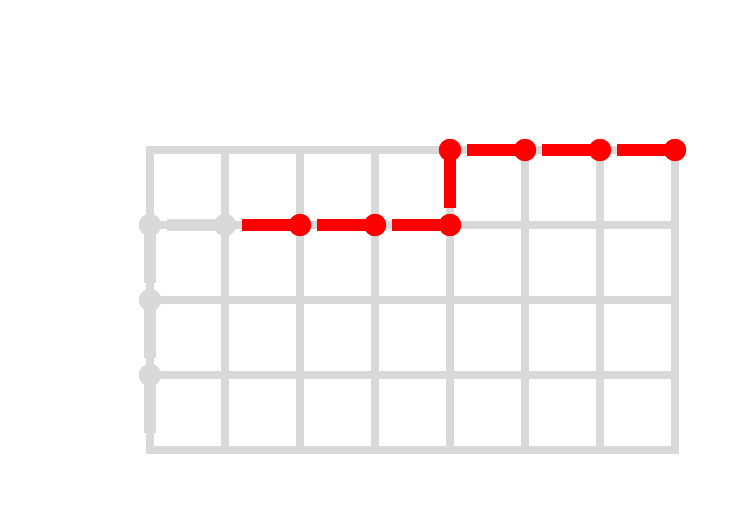}}}\\ {\bf 7} \ \textcolor{red}{12} \ 13 \ {14}\end{tabular}} &&\to
\framebox{\begin{tabular}{c}\raisebox{-0.5\height}{\reflectbox{\includegraphics[width=.12\linewidth]{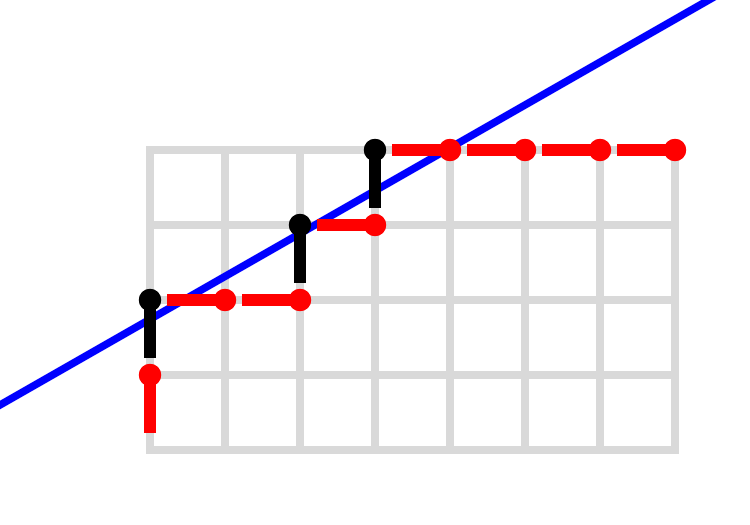}}\reflectbox{\includegraphics[width=.12\linewidth]{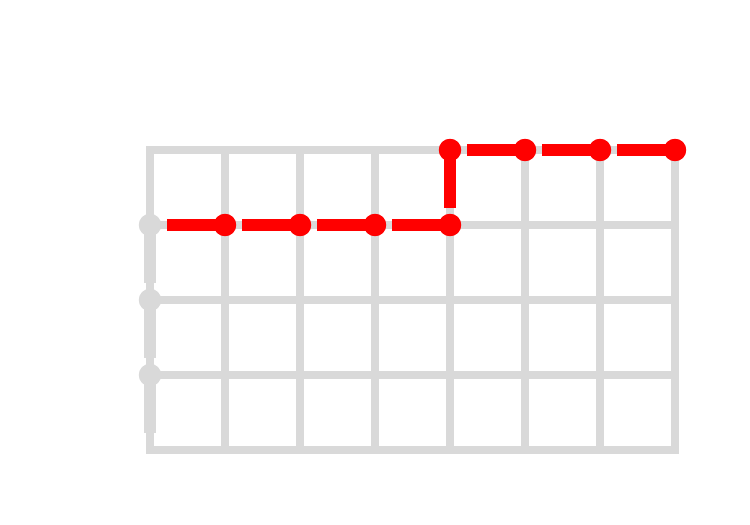}}}\\ {\bf 7} \ \textcolor{red}{\bf 13} \ 14 \ {16}\end{tabular}} &&\to \\
\framebox{\begin{tabular}{c}\raisebox{-0.5\height}{\reflectbox{\includegraphics[width=.12\linewidth]{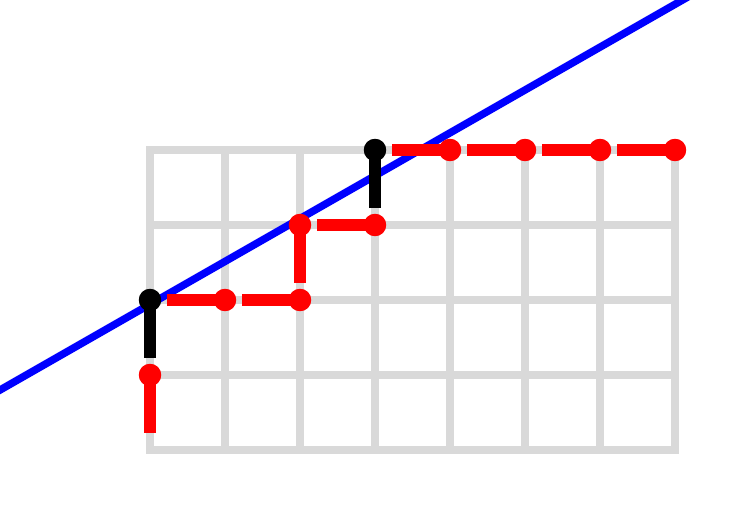}}\reflectbox{\includegraphics[width=.12\linewidth]{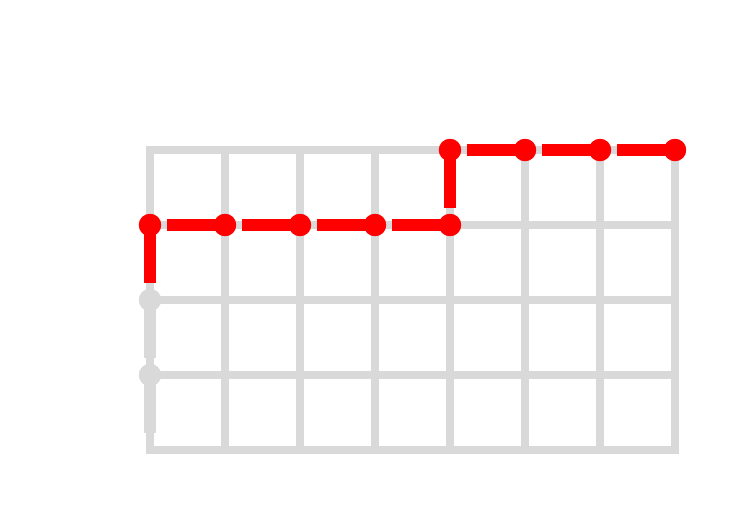}}}\\ {\bf 7} \ {\bf 13} \ \textcolor{red}{\bf 14} \ 16\end{tabular}} &&\to
\framebox{\begin{tabular}{c}\raisebox{-0.5\height}{\reflectbox{\includegraphics[width=.12\linewidth]{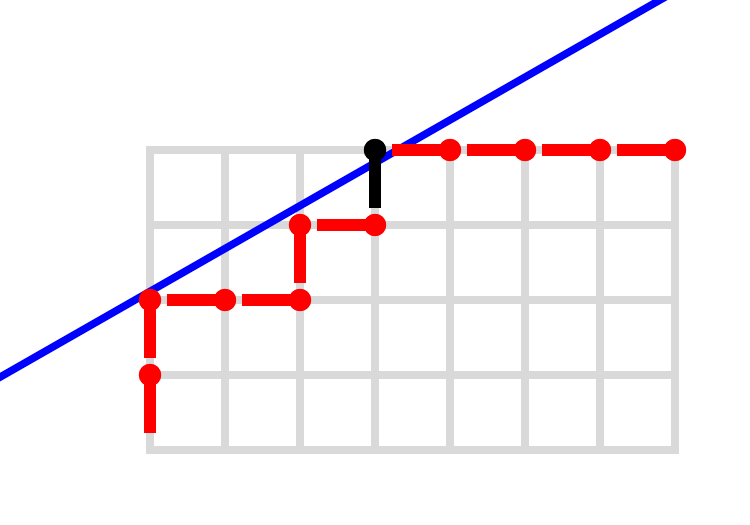}}\reflectbox{\includegraphics[width=.12\linewidth]{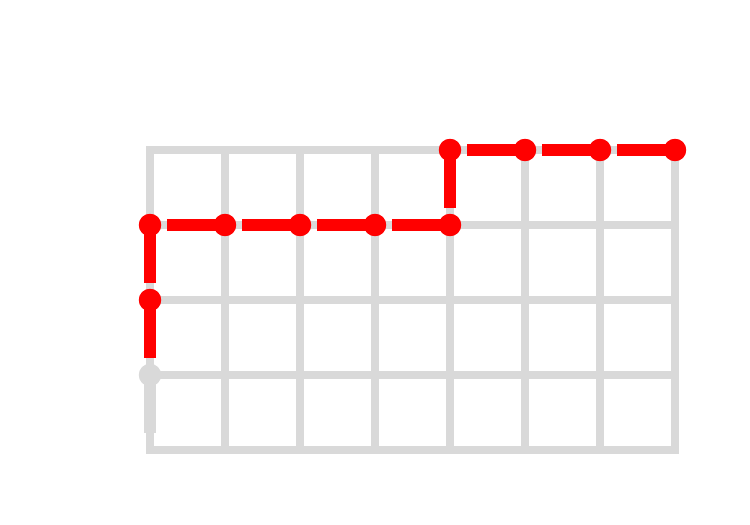}}}\\ {\bf 7} \ {\bf 13} \ {\bf 14} \ \textcolor{red}{\bf 16}\end{tabular}} &&\to
\framebox{\begin{tabular}{c}\raisebox{-0.5\height}{\reflectbox{\includegraphics[width=.12\linewidth]{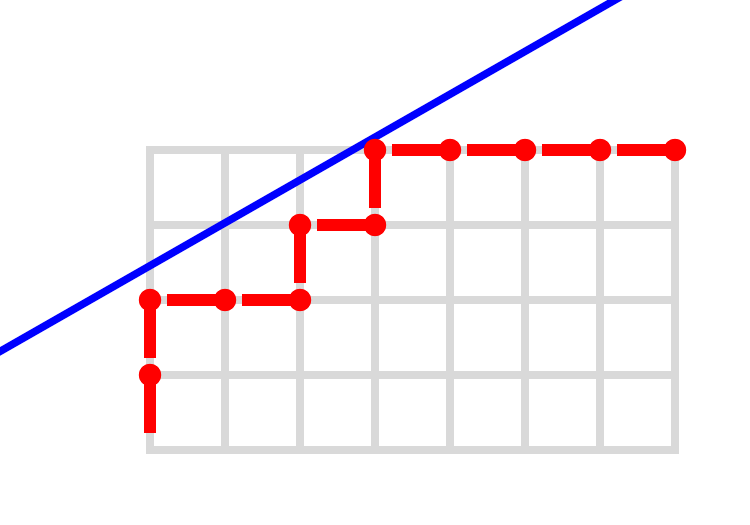}}\reflectbox{\includegraphics[width=.12\linewidth]{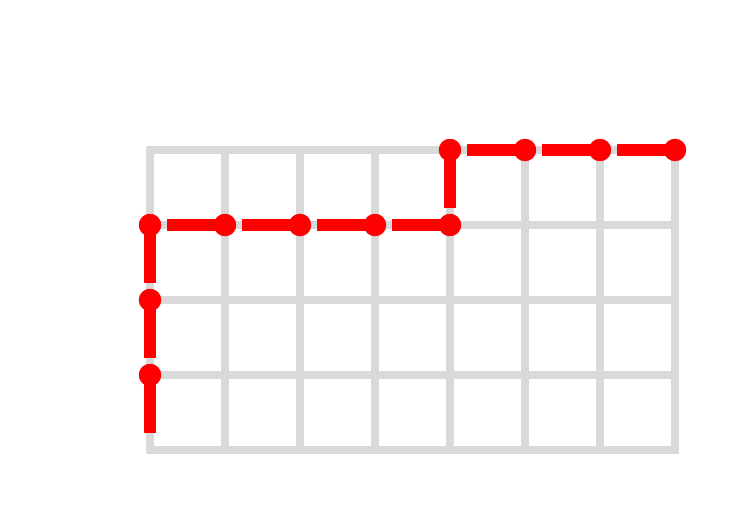}}}\\ {\bf 7} \ {\bf 13} \ {\bf 14} \ {\bf 16} \end{tabular}} &&\makebox[\widthof{$\to$}][c]{$.$}
\end{alignat*}
\caption{An illustration of the geometric interpretation of $\swg$ for $(m,n)=(4,7)$ and the path $\del$ of~\Cref{fig:sweep}.  Each box in the figure corresponds to a step of the sweeping procedure.  Each box contains the path $\del$ with the steps already swept marked in red (top left) and the steps of the new path $\swg(\del)$ already built (top right).  The $4$-tuples record the levels visible from the west in $\del$ if the red (swept) steps are rendered invisible---the level to be swept next is colored red.  Sweeping either increases the level by $4$ if it corresponds to the level to the west of a swept horizontal step, or freezes the level (indicated by bold styling) if it corresponds to the level to the north of a swept vertical step.  See~\Cref{rem:sweep_levels}.}
\label{fig:ratpaths}
\end{figure}

It is not hard to argue that the sweep map sends an $(m,n)$-Dyck path to another $(m,n)$-Dyck path~\cite[Theorem 6.7]{thomas2015sweeping}, but invertibility is considerably more difficult.  The sweep map and its various generalizations were first shown to be bijective by Thomas and Williams in~\cite{thomas2015sweeping}.

\begin{remark}
There is a canonical injection \begin{align*}\Dyck_m^n &\xhookrightarrow{} \PF_m^n \\ \del &\mapsto \prk_\del, \end{align*} where $\prk_\del$ is the unique element of $\PF_m^n$ such that $n(\del) = n(\prk_\del)$. (That is to say, $\prk_\del$ is the parking tuple from which corners of $\del$ are removed in increasing order of label.)
We call $\prk_\del$ a \defn{Dyck $(m,n)$-filter tuple}.   By replacing $\del$ by $\prk_\del$, we may consider $\Dyck_m^n$ as a subset of $\PF_m^n$.

We can rephrase this injection using the interpretation of $\del$ as an $(m,n)$-Dyck path and elements of $\PF_m^n$ as $(m,n)$-parking paths.  Thinking of $\del$ as an $(m,n)$-Dyck path from $(0,0)$ to $(-n,m)$ (as in~\Cref{fig:dyck_paths}), we label each horizontal edge by the position of the level of its left endpoint.  This associates a canonical $(m,n)$-parking path to $\del$, which corresponds to a parking $(m,n)$-filter tuple $\prk_\del$ by~\Cref{rem:park_pic}.   Note that the lattice path used to compute $\swg(\del)$ is the same as the (unlabeled) lattice path associated to $\prk_\del$ in~\Cref{rem:park_pic}, whose column heights are counted by $A(\prk_\del)$.
\label{rem:embed}
\end{remark}

The injection of~\Cref{rem:embed} allows us to relate the zeta and sweep maps as follows.

\begin{proposition}
\label{prop:sweep_and_zeta}
For $\del$ an $(m,n)$-Dyck path, $B(\prk_\del)$ is an increasing word that records the column heights of $\swg(\del)$.
\end{proposition}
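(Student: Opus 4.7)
Write $n(\del) = [p_1 < \cdots < p_n]$ for the sorted list of column minima of $\del$ (equivalently, the horizontal west-endpoint levels) and $m(\del) = \{s_r : 0 \leq r < m\}$, with $s_r$ the minimum level of row $r$. Under the sweep map, horizontal steps carry their west-endpoint level and vertical steps carry their north-endpoint level; since each vertical step's south endpoint is a row minimum of $\del$, the set of vertical-step labels is exactly $m(\del)+n$. Consequently, the column height of the $i$-th horizontal step of $\swg(\del)$ (traversed in level order at level $p_i$) equals $|(m(\del)+n)\cap(-\infty,p_i)|$, while by~\Cref{def:parkb}, $B(\prk_\del)_i = |m(\prk^{(i-1)})\cap(-\infty,p_i)|$. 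The proposition therefore reduces to the cardinality identity
\[
|m(\prk^{(i-1)})\cap(-\infty,p_i)| \;=\; |(m(\del)+n)\cap(-\infty,p_i)|,
\]
and weak monotonicity of $B(\prk_\del)$ will follow for free, since the right-hand side is non-decreasing in $i$.

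To make the left-hand side explicit I would first prove a structural lemma about Dyck filters: within each row $r$, the horizontal west-endpoint levels form a bottom-prefix $\{s_r, s_r+m, \ldots, s_r+(b_r-1)m\}$ of row $r$'s elements of $\del$, for some $b_r \geq 0$. (If $s_r+km \in n(\del)$ with $k \geq 1$, writing the lattice point as $(x,y)$ and using $(x,y-1)\notin\del$ forces $(x-1,y-1)\notin\del$ by filter closure, making $(x-1,y)$ the minimum of column $x-1$, so $s_r+(k-1)m \in n(\del)$.) Since the $p_j$ are removed in sorted order, each $p_j$ is automatically the current row-minimum of its row when removed, and after step $i-1$ one has
\[
m(\prk^{(i-1)}) = \{s_r + \alpha_r m : 0 \leq r < m\}, \qquad \alpha_r := |\{j < i : p_j \equiv r \pmod m\}|.
\]
A short case check on $\alpha_r$ versus $b_r$ then shows that $s_r + \alpha_r m < p_i$ if and only if $\alpha_r = b_r$ and $s_r + b_r m < p_i$; in particular, $|m(\prk^{(i-1)})\cap(-\infty,p_i)| = |\{r : s_r + b_r m < p_i\}|$.

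The substantive final step is the multiset identification $\{s_r + b_r m : r\} = m(\del)+n$, which I would obtain from two independent descriptions of $m(\prk^{(n)})$. Running the recursion above all the way to $i = n+1$ forces $\alpha_r = b_r$ for every $r$, giving $m(\prk^{(n)}) = \{s_r + b_r m : r\}$; on the other hand, the cycle condition built into~\Cref{def:parking} gives $m(\prk^{(n)}) = m(\prk^{(0)}) + n = m(\del) + n$. Equating these and intersecting with $(-\infty, p_i)$ delivers the required cardinality identity. The main obstacle is the careful bookkeeping in the structural lemma and case analysis—verifying that the process is well-defined and that the rows contributing to $|m(\prk^{(i-1)})\cap(-\infty,p_i)|$ are precisely those with $s_r + b_r m < p_i$; once that is in place, the cycle-property identification is the single short observation that closes the argument.
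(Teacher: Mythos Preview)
Your proof is correct and takes essentially the same approach as the paper: both identify the entries of $m(\prk^{(i-1)})$ lying below $p_i$ with the vertical-step levels of $\del$ that precede $p_i$ in the sweep, and this count is simultaneously the $i$-th column height of $\swg(\del)$ and $B(\prk_\del)_i$. The paper reaches that identification by a direct dynamic reading of the sweep (rows whose horizontal levels are exhausted freeze at their vertical-edge level, so the sub-$p_i$ entries are exactly the frozen vertical levels), while you arrive at the same fact by the explicit formula $m(\prk^{(i-1)})_r = s_r + \alpha_r m$, the completed-row characterization $\alpha_r = b_r$, and the cycle identity $m(\prk^{(n)}) = m(\del)+n$; your route is more self-contained bookkeeping, the paper's is a shorter conceptual sketch, but the underlying picture is identical.
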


\begin{proof}
 We check that $B(\prk_\del)$  encodes $\swg(\del)$: by construction of $\prk_\del$, the number $p_i$ being removed when passing from $\prk_\del^{(i)}$ to $\prk_\del^{(i+1)}$ is the minimal level among those levels in $m(\prk_\del^{(i)})$ which are the levels of horizontal edges. 
 Meanwhile, the levels in $m(\prk_\del^{(i+1)})$ with value less than that minimal horizontal edge level keep track of the vertical steps in the construction of $\swg(\del)$.  The number of such vertical edge levels which are present as we pass from $\prk_\del^{(i)}$ to $\prk_\del^{(i+1)}$ tells us, on the one hand, the height of the $i$-th column in $\swg(\del)$ and on the other hand the number of letters in $m(\prk_\del^{(i)})$ strictly less than $p_i$ (which is what $B$ records).  This is illustrated in~\Cref{fig:ratpaths}.
\end{proof}

By~\Cref{thm:b_is_bij}, since $B: \PF_m^n \to \Park_m^n$ is a bijection, we obtain a new proof that the sweep map on $(m,n)$-Dyck paths is invertible.

\begin{theorem}
For $m,n$ coprime, the sweep map on $(m,n)$-Dyck paths is invertible.
\end{theorem}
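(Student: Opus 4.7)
The strategy is to realize $\swg$ on $\Dyck_m^n$ as a restriction of the bijection $B: \PF_m^n \to \Park_m^n$ from Theorem~\ref{thm:b_is_bij}, via the canonical injection $\del \mapsto \prk_\del$ of Remark~\ref{rem:embed}. Proposition~\ref{prop:sweep_and_zeta} already does the bulk of the work: it says that the weakly increasing word $B(\prk_\del)$ encodes the column heights of $\swg(\del)$, so $\swg(\del)$ is uniquely determined by $B(\prk_\del)$, and conversely $B(\prk_\del)$ can be read off from $\swg(\del)$ just by listing its column heights in order.

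With this in hand, suppose $\swg(\del_1) = \swg(\del_2)$ for $\del_1, \del_2 \in \Dyck_m^n$. Their column-height words then coincide, so $B(\prk_{\del_1}) = B(\prk_{\del_2})$, and injectivity of $B$ (Theorem~\ref{thm:b_is_bij}) forces $\prk_{\del_1} = \prk_{\del_2}$. The embedding $\del \mapsto \prk_\del$ is itself injective---indeed $n(\prk_\del)$ is just the sorted list $n(\del)$, which determines $\del$---so $\del_1 = \del_2$. Hence $\swg$ is an injective self-map of the finite set $\Dyck_m^n$, and therefore bijective.

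For an explicit inverse, given $\del' \in \Dyck_m^n$ one reads off its column heights as a weakly increasing word $\uu \in \DyckW_m^n \subseteq \Park_m^n$, applies $B^{-1}$ to produce a parking $(m,n)$-filter tuple $\prk$, and then sorts $n(\prk)$ to recover $\swg^{-1}(\del')$. A minor sanity check here is that $B^{-1}(\uu)$ actually lies in the image of $\Dyck_m^n \hookrightarrow \PF_m^n$ whenever $\uu$ is weakly increasing; this is forced by Proposition~\ref{prop:sweep_and_zeta} together with the bijectivity just established, since the restriction of $B$ to the image of the embedding is a bijection onto $\DyckW_m^n$. No serious obstacle remains at this stage: all of the real difficulty has been absorbed into Theorem~\ref{thm:park_char}, whose Brouwer fixed-point argument is what makes the bijectivity of $B$ (and therefore of $\swg$) available as a corollary.
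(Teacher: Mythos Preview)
Your proposal is correct and follows essentially the same approach as the paper: the paper also deduces invertibility of $\swg$ directly from \Cref{thm:b_is_bij} via \Cref{prop:sweep_and_zeta} and the embedding of \Cref{rem:embed}. Your write-up is simply more explicit about the injectivity chain and the recipe for the inverse, but there is no substantive difference in strategy.
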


\begin{remark}
\label{rem:sweep_levels}
In~\cite[Section 5.2]{armstrong2015sweep}, Armstrong, Loehr, and Warrington remark that the sweep map can be inverted if the levels of each of the steps on the path specified by $\swg(\del)$ can be determined.  The last two authors gave an algorithm to determine these levels in~\cite{thomas2015sweeping}.

This strategy of determining levels can be related to the fixed point of a parking word as follows.  \Cref{prop:sweep_and_zeta} shows that the fixed point of the (increasing) parking word $B(\prk_\del)$ encodes the levels of the vertical steps of $\swg(\del)$.  For example, the left path $\del$ in~\Cref{fig:ratpaths} corresponds to the $(m,n)$-filter tuple $\prk_\del$ specified on the left of~\Cref{fig:sweep}.   Then $m(\prk_\del^{(n)})= [7,13,14,16]$ records the levels that should be assigned (from top left to bottom right) to the vertical steps of $\swg(\del)$, as illustrated on the right of~\Cref{fig:sweep}.  The remaining levels---corresponding to the horizontal steps (again from top left to bottom right)---are determined from by the word $n(\prk_\del^{(0)}) = [0,4,6,8,9,10,12]$. 
\end{remark}

\section{The Affine Symmetric Group}
\label{sec:affine_symmetric_and_regions}

In~\Cref{sec:words_and_actions,sec:new_char}, we gave a new interpretion of $(m,n)$-parking words as transformations of $V_0^m$---that is, they were words 
acting with fixed points on points with $m$ coordinates.  In this section, we recall the interpretation of $(m,n)$-parking words as points in $\RR^n_{n(n+1)/2}$---that is, as certain points with $n$ coordinates.

The coincidence between the number of regions in the type $\widetilde{\Sym}_n$ Shi arrangement (\Cref{sec:sommers}) and the number of $(n{+}1,n)$-parking words has led to many purely combinatorial investigations~\cite{stanley1996hyperplane,stanley1998hyperplane,athanasiadis1999simple,armstrong2012shi,leven2014bijections}.   Although many different authors have found many different bijections between Shi regions and parking words, this direction of research culminates in work of Gorsky, Mazin, and Vazirani~\cite{gorsky2016affine}, who expand upon and generalize Armstrong's work in~\cite{armstrong2013hyperplane} from the Fuss to the rational level of generality.  In this section, we prove several of their conjectures.

We first review the basic combinatorics of $\widetilde{\Sym}_n$ in~\Cref{sec:affine_symmetric}.  We state the simple relationship between parking $(m,n)$-filter tuples and the affine symmetric group in~\Cref{thm:balanced_eq_sommers,thm:and_bij} and \Cref{thm:dominant_bij}.  This relationship allows us to define two maps from a generalization of Shi regions (alcoves in the Sommers region) to parking words, which are a restatement of~\Cref{def:parka,def:parkb}.

\subsection{The Affine Symmetric Group}
\label{sec:affine_symmetric}
The affine symmetric group $\widetilde{\Sym}_n$ is the group of bijections
$\aw: \ZZ \to \ZZ$ such that
\begin{align*} \aw(i+n)&=\aw(i)+n \text{ and} \\
\sum_{i=1}^n \aw(i) &= \binom{n+1}{2}.\end{align*}
We often represent elements of $\widetilde{\Sym}_n$ in (short) one-line notation \[\aw = \left[\aw(1),\aw(2),\ldots,\aw(n)\right].\]  A \defn{dominant permutation} is an affine permutation $\aw$ whose one-line notation increases, so that $\aw(1)<\aw(2)<\cdots<\aw(n)$.  An \defn{inversion} of $\aw$ is a pair $(i,j)$ with $1 \leq i \leq n$ and $i<j$ such that $\aw(i)>\aw(j)$.  We refer the reader to~\cite{lusztig1983some,gorsky2016affine} for more details.

The one-line notation of affine permutations bijectively corresponds to the alcoves in the affine $\widetilde{\mathfrak{S}}_{n}$ hyperplane arrangement, introduced in~\Cref{sec:hyperplane_and_functions}.\footnote{But note that we are now working with $\widetilde{\mathfrak{S}}_{n}$ not $\widetilde{\mathfrak{S}}_{m}$.}

\begin{theorem}[{\cite[Lemma 2.9]{gorsky2016affine}}]
Each alcove of $\RR^n_{n(n+1)/2} \setminus \mathcal{H}$ contains a unique point $(x_1,\ldots,x_n)$ that is the one-line notation of an element of $\widetilde{\Sym}_n$.  Conversely, each element of $\widetilde{\Sym}_n$ occurs as such a point.
\label{thm:alcove_points}
\end{theorem}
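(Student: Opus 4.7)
The plan is to establish two facts: first, that every one-line notation $p_{\aw} := (\aw(1),\ldots,\aw(n))$ avoids $\mathcal{H}$; and second, that $\ASym_n$ acts simply transitively on alcoves in such a way that the orbit of $\rho := (1,2,\ldots,n)$ is precisely $\{p_{\aw} : \aw \in \ASym_n\}$.

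For the first claim, I would observe that the periodicity $\aw(i+n) = \aw(i) + n$ combined with the injectivity of $\aw$ forces $\aw(1),\ldots,\aw(n)$ to be pairwise distinct modulo $n$: if $\aw(i) \equiv \aw(j) \pmod{n}$ for some $1 \leq i \neq j \leq n$, then $\aw(j) = \aw(i) + kn = \aw(i+kn)$ for some $k \in \ZZ$, contradicting injectivity. Hence no coordinate difference $\aw(i) - \aw(j)$ is a nonzero multiple of $n$, so $p_{\aw}$ avoids every hyperplane in $\mathcal{H}$. The converse is also immediate: any integer point $y \in \RR^n_{\binom{n+1}{2}} \setminus \mathcal{H}$ has coordinates distinct modulo $n$ (and hence, being $n$ in number, forming a complete residue system), so setting $\aw(i) = y_i$ for $1 \leq i \leq n$ and extending by $\aw(i+n) = \aw(i) + n$ defines an affine permutation with $p_{\aw} = y$. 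Thus $\aw \mapsto p_{\aw}$ is already a bijection between $\ASym_n$ and the set of integer points in $\RR^n_{\binom{n+1}{2}} \setminus \mathcal{H}$.

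For the second claim, I would invoke the realization of $\ASym_n$ as the affine Weyl group of type $\widetilde{A}_{n-1}$: the simple reflections $s_1,\ldots,s_{n-1}$ act on $\RR^n$ by swapping adjacent coordinates, and $s_0$ acts as reflection across $\{x_n - x_1 = n\} \in \mathcal{H}$. This isometric action preserves $\mathcal{H}$ and, by standard Coxeter theory, is simply transitive on alcoves. The point $\rho$ lies in the interior of a unique alcove $A_0$, since $|\rho_i - \rho_j| \leq n-1 < n$ for all $1 \leq i,j \leq n$. A direct check on each generator verifies that this geometric action, applied iteratively to $\rho$, sends $\aw$ to $p_{\aw}$; for example, $s_i \cdot \rho = (1,\ldots,i{-}1,i{+}1,i,i{+}2,\ldots,n) = p_{s_i}$ for $i \geq 1$, and $s_0 \cdot \rho = (0,2,3,\ldots,n{-}1,n{+}1) = p_{s_0}$ by the reflection formula.

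Combining these facts, each alcove has the form $\aw \cdot A_0$ for a unique $\aw \in \ASym_n$, and by the orbit identification this alcove contains $p_{\aw}$ and no other one-line notation. The only delicate point in the argument is aligning conventions between the left and right actions of $\ASym_n$ on one-line notations (the geometric action of $s_i$ on $\rho$ matches right multiplication by $s_i$); once this bookkeeping is fixed, the theorem follows immediately from the simple transitivity of the affine Weyl group on alcoves, as detailed in the references cited in the statement.
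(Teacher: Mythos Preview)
The paper does not prove this theorem; it is stated with a citation to \cite[Lemma 2.9]{gorsky2016affine} and used as a black box. There is therefore no ``paper's own proof'' to compare against.

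Your argument is the standard one and is correct. The only place to be slightly more careful is the passage from ``check on generators'' to ``$\aw\cdot\rho = p_{\aw}$ for all $\aw$'': you should phrase this as the compatibility $s_i\cdot p_{\aw} = p_{\aw s_i}$ (which you do note), and then observe that since $\rho = p_e$ lies in the \emph{interior} of $A_0$, the point $p_{\aw}$ lies in the interior of the alcove $\aw\cdot A_0$, so distinct $\aw$ give points in distinct alcoves. You already have all of these ingredients; the argument just needs to be assembled in that order so that uniqueness within each alcove is explicit rather than implicit.
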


The alcove labeled by the identity permutation $[1,2,\ldots,n]$ is called the \defn{fundamental alcove} $\A_0$.   An inversion $(i,j)$ of $w\in\widetilde{\Sym}_n$ corresponds to the hyperplane $\mathcal{H}_{i,j'}^k$ that separates the alcove containing the one-line notation for $w$ from $\A_0$, where $j'=\begin{cases} j \mod n & \text{if } j \neq 0 \mod n \\ n & \text{otherwise} \end{cases}$ and $k=\frac{1}{n}(j-j')$.  The bijection of~\Cref{thm:alcove_points} between $\widetilde{\Sym}_n$ and the alcoves of $\RR^n_{n(n+1)/2} \setminus \mathcal{H}$ is illustrated for $n=3$ in~\Cref{fig:perms}.  On the other hand,~\Cref{fig:invperms} depicts the the labeling of an alcove by the \emph{inverse} of the corresponding permutation.

\begin{figure}[htb]
\includegraphics[width=.75\textwidth]{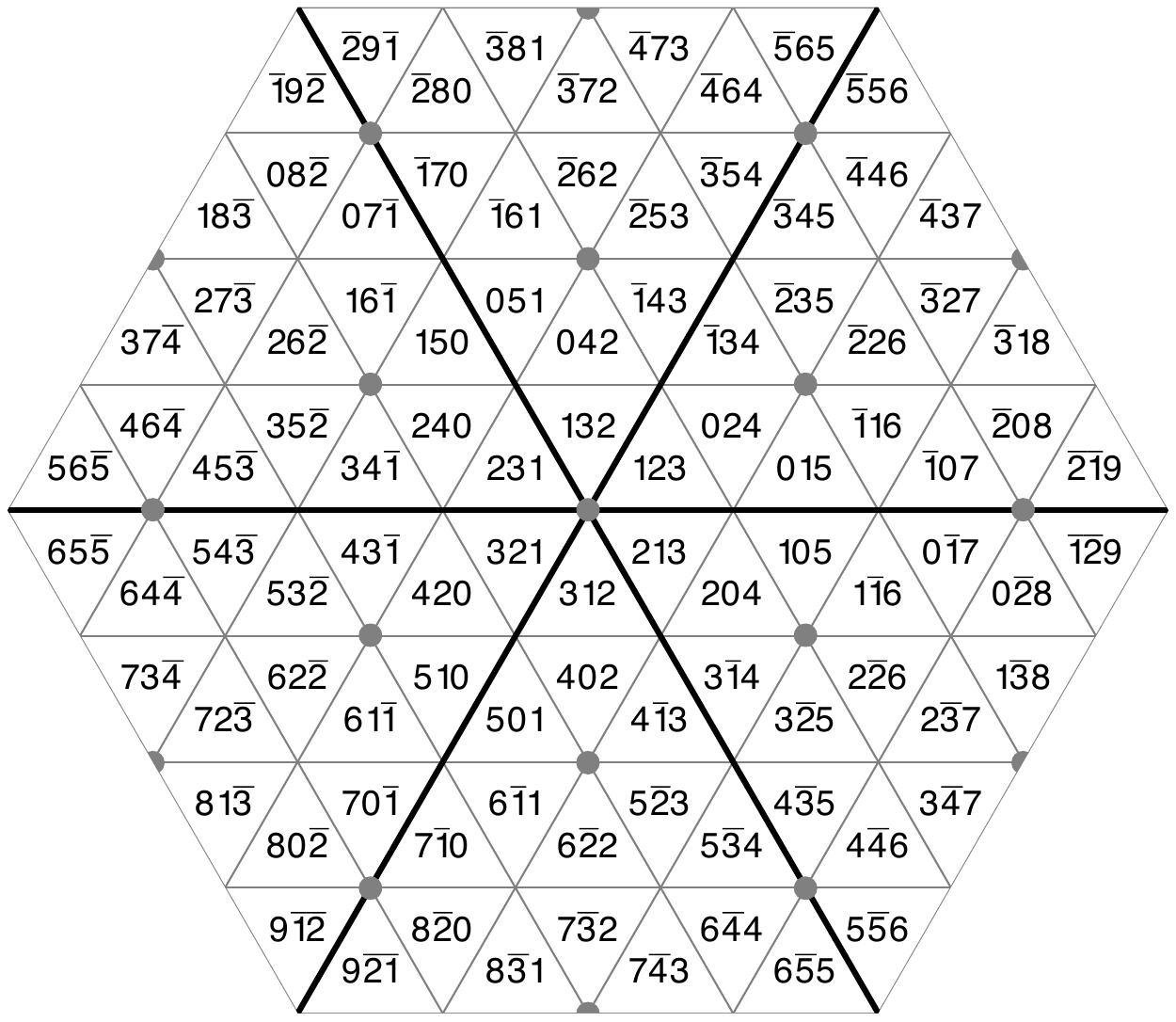}
\caption{The labeling of alcoves in $\RR^3_{6} \setminus \mathcal{H}$ by $\widetilde{\Sym}_3$.  The three solid black lines are the hyperplanes $\mathcal{H}_{1,2}^0, \mathcal{H}_{1,3}^0,$ and $\mathcal{H}_{2,3}^0$.}
\label{fig:perms}
\end{figure}

\begin{figure}[htb]
\includegraphics[width=.75\textwidth]{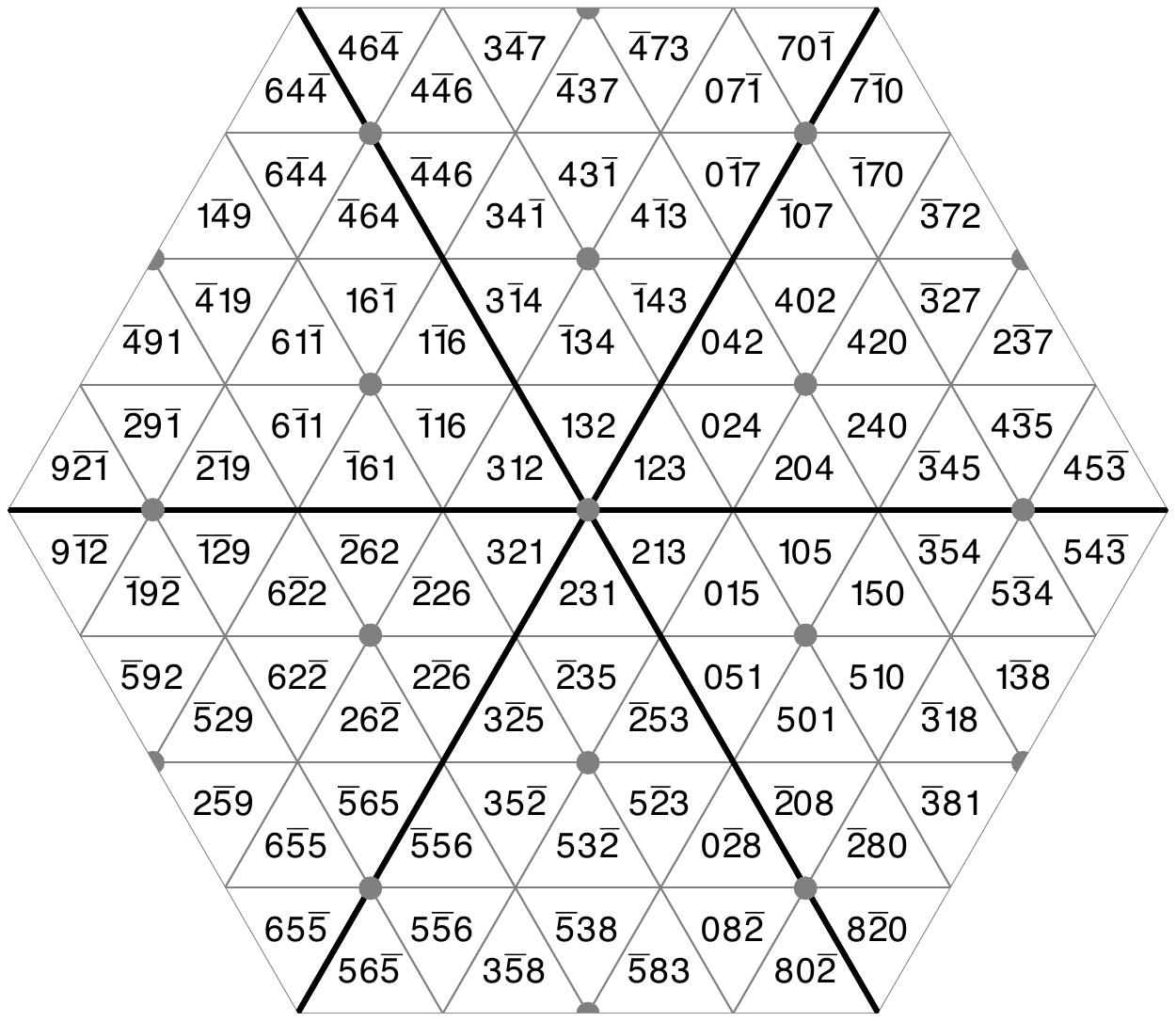}
\caption{The labeling of alcoves in $\RR^3_6 \setminus \mathcal{H}$ by inverse permutations.}
\label{fig:invperms}
\end{figure}

\subsection{The Sommers Region}
\label{sec:sommers}

\begin{definition}\label{def:sommers}
For $m$ coprime to $n$, the \defn{Sommers region}  $\So_m^n\subset\mathbb{R}^{n}_{n(n+1)/2}$ is the region bounded by the $n$ affine hyperplanes in $\widetilde{\Sym}_n$ of height $m$.
\end{definition}

The regions $\So_4^3$ and $\So_5^3$ are illustrated in~\Cref{fig:sommers}.  We have chosen to denote the Sommers region as $\So^n_m$ so that the exponent $n$ matches the exponent in the ambient space $\mathbb R^n_{n(n+1)/2}$---some references, such as~\cite{gorsky2016affine}, make the choice of opposite convention so that the subscript matches the subscript of $\widetilde{\Sym}_n$.    Note that when $m$ is not coprime to $n$, the hyperplanes of height $m$ do not bound a finite region.

By abuse of notation, using~\Cref{thm:alcove_points} we write $\aw \in \So_m^n$ if $\aw$ is an affine permutation labeling an alcove inside $\So_m^n$.  We can detect such affine permutations with the following simple proposition.

\begin{proposition}[{\cite[Definition 2.14]{gorsky2016affine}}]
An affine permutation $\aw^{-1} \in \widetilde{\Sym}_n$ labels an alcove in the region $\So_m^n$ iff $\aw(i)-\aw(j)\neq m$ for all $i<j$.
\label{prop:affine_perm_in_so}
\end{proposition}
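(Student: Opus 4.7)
The plan is to translate the geometric membership condition for the Sommers simplex into the inversion condition introduced just after~\Cref{thm:alcove_points}, and then dualize from $\aw^{-1}$ to a statement about $\aw$.

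First, since $\gcd(m,n)=1$, the $n$ height-$m$ hyperplanes bound a simplex whose interior contains the fundamental alcove $\A_0$---one verifies this by checking that each defining inequality holds strictly on the $\A_0$-side when evaluated at the identity $(1,2,\ldots,n)$. This simplex is $\So_m^n$, so $\A_w \subseteq \So_m^n$ if and only if no height-$m$ hyperplane separates $\A_w$ from $\A_0$. By the dictionary reviewed after~\Cref{thm:alcove_points}, separating hyperplanes are indexed by inversions of $w$, and the inversion $(i,j)$ with $1 \leq i \leq n$ and $i < j$ corresponds to $\mathcal{H}^k_{i,j'}$ of height $j'-i+kn = j-i$. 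Hence $\A_w \in \So_m^n$ if and only if $w$ has no inversion $(i, i+m)$ with $1 \leq i \leq n$.

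Specializing to $w = \aw^{-1}$ and setting $a := \aw^{-1}(i)$, $b := \aw^{-1}(i+m)$, the inversion $\aw^{-1}(i) > \aw^{-1}(i+m)$ becomes $b < a$ with $\aw(b)-\aw(a) = m$; writing $i' := b$ and $j' := a$, this is exactly a pair $i' < j'$ in $\ZZ$ with $\aw(i')-\aw(j') = m$. Conversely, any such pair $i' < j'$ can be shifted by a common multiple of $n$ (using $\aw(k+n) = \aw(k)+n$) so that $\aw(j'+kn)$ lies in $\{1,\ldots,n\}$, recovering a height-$m$ inversion of $\aw^{-1}$. The two conditions are therefore equivalent, which is the proposition.

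The only mild bookkeeping obstacle is this final equivalence: the inversion condition restricts $i \in \{1,\ldots,n\}$, while the differences condition ranges over all $i < j$ in $\ZZ$. The periodicity of $\aw$ collapses these to the same content, so the proposition reduces to a direct unpacking of the definitions from~\Cref{sec:affine_symmetric}.
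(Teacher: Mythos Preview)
Your argument is correct. Note, however, that the paper does not actually supply a proof of this proposition: it is stated with a citation to \cite[Definition 2.14]{gorsky2016affine} and used as a black box. Your write-up provides the natural direct justification---translating membership in the simplex $\So_m^n$ into the absence of separating height-$m$ hyperplanes via the inversion/hyperplane dictionary of~\Cref{sec:affine_symmetric}, and then dualizing from $\aw^{-1}$ to $\aw$ using periodicity. This is essentially how the result is obtained in the cited reference, so there is nothing to compare against in the present paper.
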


\subsubsection{History of the Sommers Region}
\label{sec:history_sommers}
The Sommers region originated in Shi's study of Kazhdan-Lusztig cells of affine Weyl groups~\cite{shi2006kazhdan}, as we now outline.  The collection of affine hyperplanes \[\bigcup_{1 \leq i < j \leq n} \left(\mathcal{H}_{i,j}^0 \cup  \mathcal{H}_{j,i}^1 \right)\] is called the \defn{Shi arrangement}, and these hyperplanes cut out connected regions called \defn{Shi regions}.  Each Kazhdan-Lusztig cell is a union of Shi regions.  Following a suggestion of Carter, Shi gave an elegant geometric proof that there are $(n+1)^{n-1}$ Shi regions by showing that the inverses of the permutations labeling the minimal alcoves in the Shi regions coalesce into what has become known as the Sommers region $\So_{n+1}^n$~\cite{shi1987sign,sommers2003b}.\footnote{Eric Sommers was surprised to learn that the region has recently been named after him.}

 There is a Fuss analogue of the Shi arrangement, defined as the hyperplanes \[\bigcup_{\substack{1 \leq i < j \leq n \\ -k \leq s \leq k-1}} \mathcal{H}_{i,j}^s.\]
 This arrangement has $(kn+1)^{n-1}$ connected regions---again, the inverses of the minimal alcoves coalesce into the Sommers region $\So_{kn+1}^n$.
 
 The \defn{fundamental alcove} $\mathcal{A}_0$ in $\RR_{n(n+1)/2}^n$ is the simplex bounded by the affine simple hyperplanes.  It turns out that $\So_m^n$ is congruent to the $m$-fold dilation of the fundamental alcove $m\mathcal{A}_0$---this may be realized by multiplication by the element~\cite[Lemma 2.16]{gorsky2016affine},\cite[Theorem 4.2]{thiel2017strange}
\begin{equation}\aw_m^n := [\ell,\ell+m,\ldots,\ell+(n-1)m] \in \widetilde{\Sym}_n, \text{ where } \ell=\frac{1+m+n-mn}{2}.\label{eq:wmn}\end{equation} 
 
 Variations on subarrangements of affine Weyl hyperplane arrangements has led to interesting and surprisingly difficult combinatorics~\cite{stanley1996hyperplane,stanley1998hyperplane,athanasiadis1998free,postnikov2000deformations,armstrong2012shi,leven2014bijections,thomas2014cyclic}, but outside of $m=kn+1$ there are no hyperplane arrangements whose regions have minimal alcoves given by the inverses of the elements in $\So_m^n$~\cite[Example 9.2]{gorsky2016affine}.  Suggestive results exist for $m=kn-1$ using Zaslavsky's theorem enumerating bounded regions of a hyperplane arrangement (or Ehrhart duality)~\cite{fishel2010bijection,fishel2013counting}, and some work has been done when $m$ and $n$ are not coprime~\cite{gorsky2017rational}.

\begin{figure}[htb]
\includegraphics[width=.45\textwidth]{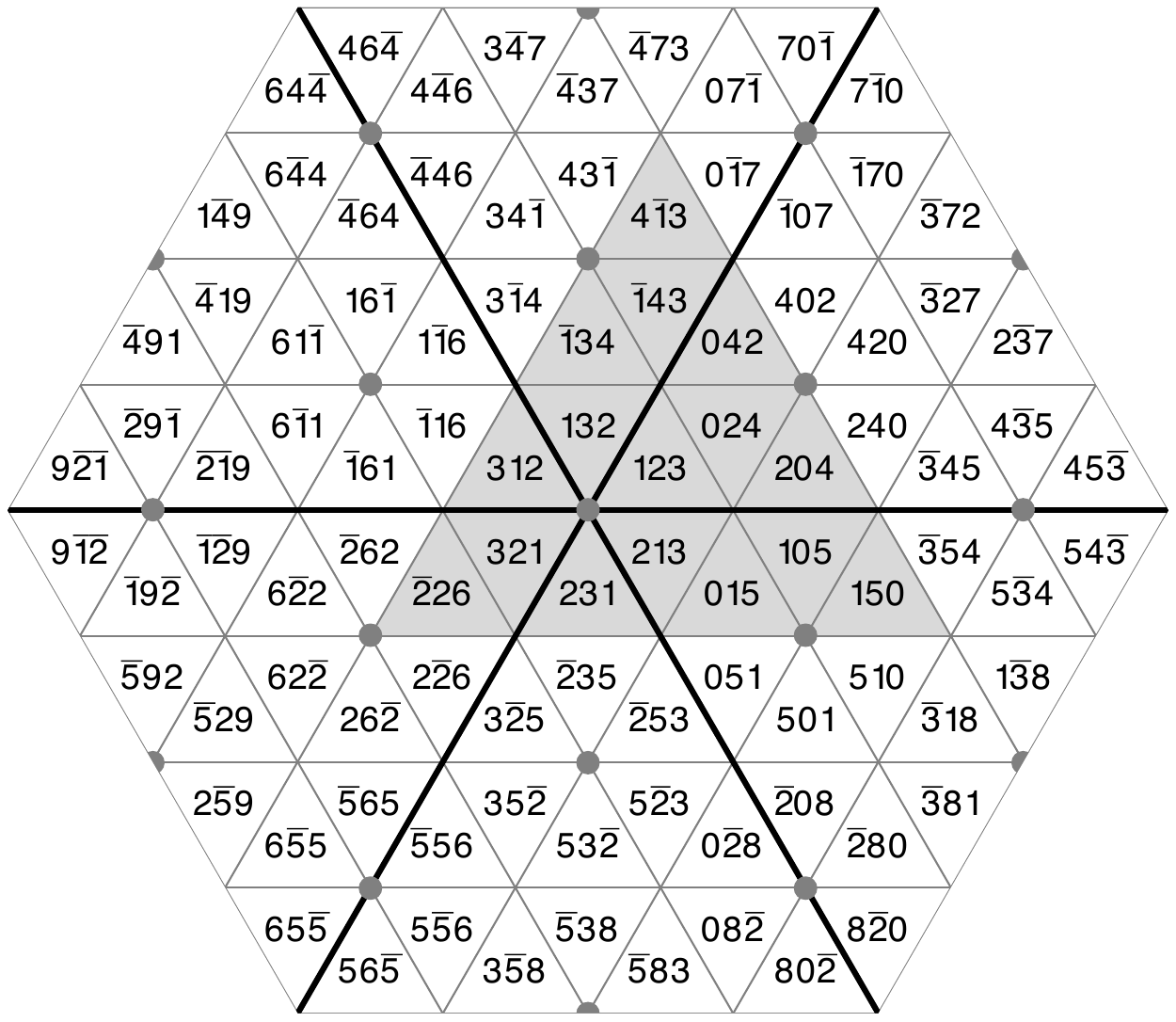}\hfill\includegraphics[width=.45\textwidth]{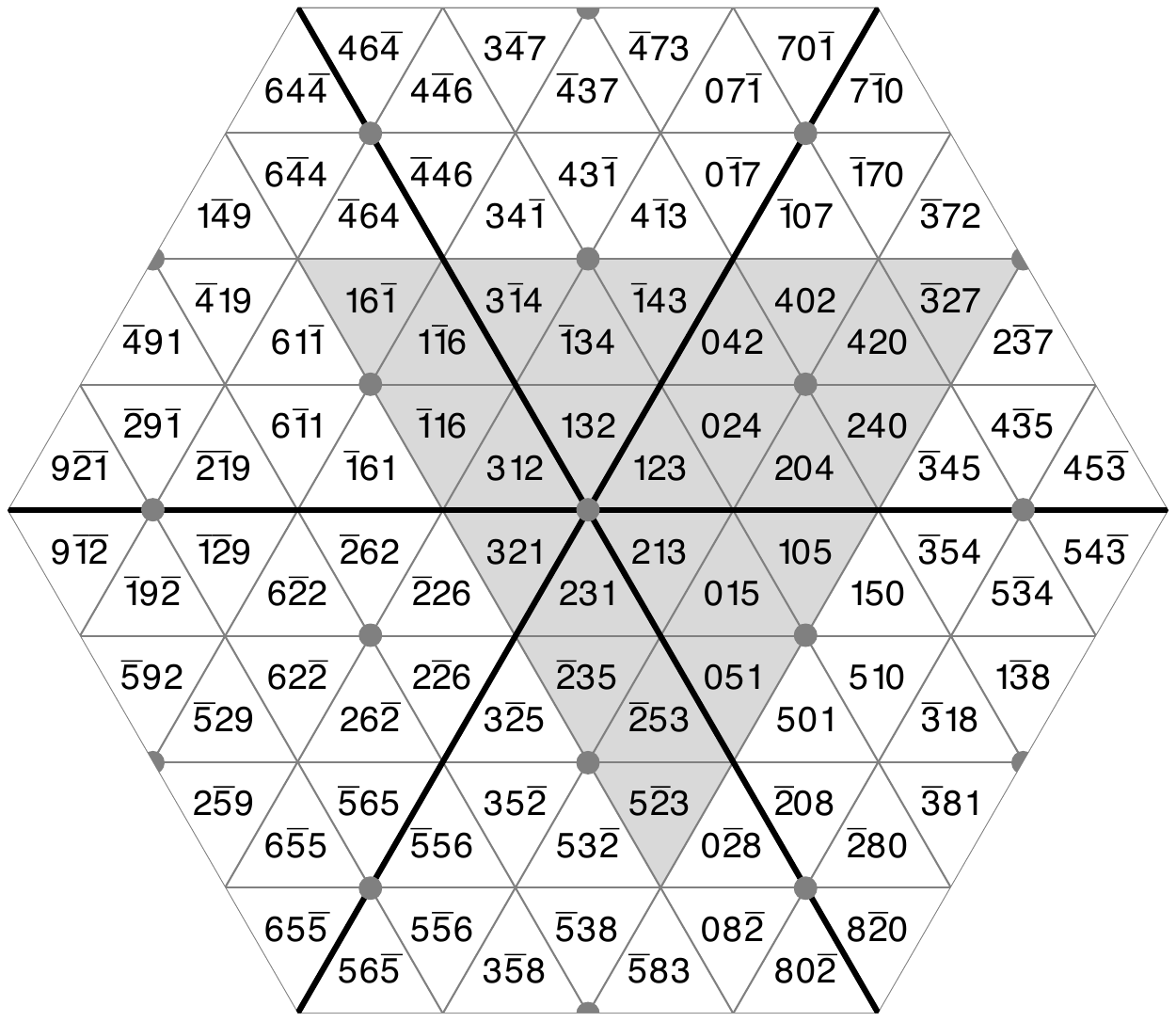}
\caption{The Sommers regions $\So_4^3$ and $\So_5^3$, with alcoves labeled by inverse permutations.}
\label{fig:sommers}
\end{figure}

\subsubsection{Filters and the Sommers Region}
\label{sec:filters_and_sommers}

To connect $(m,n)$-filters and affine permutations, we define the analogue of the directed graph $\mathfrak{F}_m^n$ in~\Cref{def:directed_graph}.

Fix $\aw\in \widetilde{\Sym}_n$ with $\aw^{-1} \in \So_m^n$.   An \defn{$m$-minimal element} of $\aw$ is an element of $\{\aw(i) : i \in \mathbb{N}\}$ that is minimal in its residue class modulo $m$.  We say that an $m$-minimal element of $\aw$ is \defn{removable} if it is in the short one-line notation of $\aw$---that is, if it is $\aw(i)$ for some $1 \leq i \leq n$. 

\begin{definition}
Define a directed graph $\mathfrak{P}_m^n$ with vertex set \[\left\{\aw \text{ dominant} : \aw^{-1} \in \So_m^n \right\}\] and a directed edge between $\aw$ and $\aw'$ iff the short one-line notation of $\aw'$ can be obtained from the short one-line notation of $\aw$ by adding $n$ to a removable $m$-minimal element of $\aw$, subtracting one from every element, and then resorting.
\label{def:directed_graph2}
\end{definition}

\begin{lemma}
Acting as described in~\Cref{def:directed_graph2} on a removable $m$-minimal element of a dominant $\aw$ with $\aw^{-1}\in \So_m^n$ produces another dominant element whose inverse is in $\So_m^n$.
\end{lemma}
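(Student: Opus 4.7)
The statement has two parts: that $\aw'$ is dominant and that $(\aw')^{-1}$ lies in $\So_m^n$. Dominance is immediate from the final resort step, and the affine normalization $\sum_i \aw'(i) = \binom{n+1}{2}$ is preserved because adding $n$ to one entry and subtracting $1$ from each of the $n$ entries produces a net change of zero; moreover, the residues modulo $n$ represented in the short one-line notation are each shifted by $-1$ and so remain a complete set of representatives, confirming $\aw' \in \widetilde{\mathfrak{S}}_n$.

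For the Sommers condition, I plan to work through the correspondence between dominant affine permutations $\aw$ with $\aw^{-1} \in \So_m^n$ and balanced $(m,n)$-filters, given by $\ell(\bal) := \{\aw(i) : i \geq 1\}$ (using the affine extension $\aw(i+n) = \aw(i) + n$). The set $\ell(\bal)$ is automatically closed under $+n$, and I would argue that closure under $+m$ is equivalent to the Sommers criterion of~\Cref{prop:affine_perm_in_so} as follows: any Sommers-violating pair $i < j$ with $\aw(i) - \aw(j) = m$ must have $j \geq n+1$ whenever $i \geq 1$, since dominance rules out such an inversion inside the short one-line notation; subtracting $n$ from both indices then yields a pair $i' \leq 0 < j'$ with $\aw(i') - \aw(j') = m$, which is exactly a failure of $+m$-closure, and conversely any such pair is itself a Sommers-violating pair. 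Dominance identifies $n(\bal)$ with the sorted short one-line notation of $\aw$, and the affine normalization matches the balanced condition $\sum_{j \in n(\bal)} j = \binom{n+1}{2}$.

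Under this dictionary, a removable $m$-minimal element $\aw(i_0)$ of $\aw$ is precisely an element of $n(\bal) \cap m(\bal)$---a poset-minimal level of $\bal$. The operation of~\Cref{def:directed_graph2} then translates cleanly to the two-step filter move appearing in the proof of~\Cref{prop:balanced2}: remove the minimal level $\aw(i_0)$ from $\bal$, which replaces $\aw(i_0)$ by $\aw(i_0) + n$ in $n(\bal)$, and then shift the resulting filter by $-1$, which subtracts $1$ from every remaining column-minimum. Since that move preserves the balanced condition, the image $\bal''$ is another balanced $(m,n)$-filter, and its sorted column-minima recover the short one-line notation of $\aw'$, establishing that $\aw'$ is dominant with $(\aw')^{-1} \in \So_m^n$. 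The main obstacle is making the equivalence between the Sommers condition and $+m$-closure fully rigorous, which requires careful tracking of indices under the affine shift by $-n$; once that is in place, the rest is a direct unwinding of definitions.
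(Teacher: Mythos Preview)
Your proof is correct and takes a genuinely different route from the paper's. The paper gives a direct verification using only the criterion of \Cref{prop:affine_perm_in_so}: after the move, the only potential new Sommers violation involves the modified entry $\aw(i)+n$, and the existence of some later $\aw(j)=\aw(i)+n-m$ is ruled out by a short case split on whether $j-n<i$ (which would contradict $m$-minimality of $\aw(i)$, since $\aw(j-n)\equiv\aw(i)\pmod m$) or $j-n>i$ (which would already be a Sommers violation for $\aw$). Your approach instead establishes directly---independently of the later \Cref{thm:balanced_eq_sommers}---that for dominant $\aw$ the Sommers condition is equivalent to $\{\aw(k):k\ge1\}$ being closed under $+m$; the lemma then reduces to the triviality that deleting a poset-minimal element of an order filter yields another order filter. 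Your argument is longer but more conceptual: it explains \emph{why} the move preserves the Sommers region and essentially front-loads the content of \Cref{thm:balanced_eq_sommers}, so that the latter would follow almost immediately. The paper's argument is shorter and stays entirely at the level of one-line notations, at the cost of a terser case analysis.
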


\begin{proof}
Suppose that $\aw(i)$ is a removable $m$-minimal element, and let 
$\aw'$ be produced as above starting from that element.  Clearly
$\aw'$ is dominant. We now apply the condition of~\Cref{prop:affine_perm_in_so} to $\aw'$.  The only way a problem could arise would be if there were some $j>n$ with $\aw(j)=\aw(i)+n-m$.  But if
$j-n<i$, the fact that $\aw(j-n)$ is congruent modulo $m$ to $\aw(i)$ would violate the $m$-minimality of $i$, while $j-n>i$ would violate the condition of~\Cref{prop:affine_perm_in_so} for $\aw$.
\end{proof}

\begin{figure}[htb]
\centering
\begin{tikzpicture}[scale=1.5]
\node (A) [fill=white!20,rounded corners=2pt,inner sep=1pt] at (0,0) {$[1,2,3,4]$};
\node (D) [fill=white!20,rounded corners=2pt,inner sep=1pt] at (1.5,2.5) {$[0,1,3,6]$};
\node (E) [fill=white!20,rounded corners=2pt,inner sep=1pt] at (0,3.5) {$[-2,1,4,7]$};
\node (B) [fill=white!20,rounded corners=2pt,inner sep=1pt] at (-1.5,2.5) {$[-1,2,4,5]$};
\node (C) [fill=white!20,rounded corners=2pt,inner sep=1pt] at (0,1.5) {$[0,2,3,5]$};
\draw[->] (B) to [bend right]  node[midway, left] {$0$} (A);
\draw[->] (C) to [bend right] node[midway, left] {$0$} (A);
\draw[->] (A) to [bend right] node[midway, right] {$1$} (C);
\draw[->] (B) to node[midway, above] {$2$} (E);
\draw[->] (D) -- (B) node[midway, above] {$1$};
\draw[->] (C) -- (B) node[midway, below] {$1$};
\draw[->] (D) -- (C) node[midway, below] {$0$};
\draw[->] (A) to [bend right] node[midway, right] {$2$} (D);
\draw[->] (E) -- (D) node[midway, above] {$0$};
\draw [->] (A) edge[loop below]node{$0$} (A);
\end{tikzpicture}\hfill
\begin{tikzpicture}[scale=1.5]
\node (A) [fill=white!20,rounded corners=2pt,inner sep=1pt] at (0,0) {$[1,2,3]$};
\node (D) [fill=white!20,rounded corners=2pt,inner sep=1pt] at (1.5,2.5) {$[0,1,5]$};
\node (E) [fill=white!20,rounded corners=2pt,inner sep=1pt] at (0,3.5) {$[-2,2,6]$};
\node (B) [fill=white!20,rounded corners=2pt,inner sep=1pt] at (-1.5,2.5) {$[-1,3,4]$};
\node (C) [fill=white!20,rounded corners=2pt,inner sep=1pt] at (0,1.5) {$[0,2,4]$};
\draw[->] (B) to [bend right]  node[midway, left] {$0$} (A);
\draw[->] (C) to [bend right] node[midway, left] {$0$} (A);
\draw[->] (A) to [bend right] node[midway, right] {$1$} (C);
\draw[->] (B) to node[midway, above] {$2$} (E);
\draw[->] (D) -- (B) node[midway, above] {$1$};
\draw[->] (C) -- (B) node[midway, below] {$1$};
\draw[->] (D) -- (C) node[midway, below] {$0$};
\draw[->] (A) to [bend right] node[midway, right] {$2$} (D);
\draw[->] (E) -- (D) node[midway, above] {$0$};
\draw [->] (A) edge[loop below]node{$0$} (A);
\end{tikzpicture}
\caption{The five dominant permutations whose inverses lie in $\So_3^4$ and $\So_4^3$, arranged in the directed graphs $\mathfrak{P}_3^4 \cong \mathfrak{P}_4^3$.   The edge labels record the \emph{position} of the removable $m$-minimal element chosen.  Each parking word in $\Park_{4}^3$ occurs as a unique directed cycle of length $3$ in $\mathfrak{P}_3^4$, while each parking word in $\Park_{3}^4$ occurs as a unique directed cycle of length $4$ in $\mathfrak{P}_4^3$.  Compare with~\Cref{fig:balanced}.}
\label{fig:perms_balanced}
\end{figure}

\begin{figure}[htb]
\centering
\begin{tikzpicture}[scale=1.5]
\node (A) [fill=white!20,rounded corners=2pt,inner sep=1pt] at (0,0) {$[1,2,3,4,5]$};
\node (D) [fill=white!20,rounded corners=2pt,inner sep=1pt] at (1.5,2.5) {$[0,1,3,4,7]$};
\node (D2) [fill=white!20,rounded corners=2pt,inner sep=1pt] at (1.5,4) {$[-1,1,2,5,8]$};
\node (E) [fill=white!20,rounded corners=2pt,inner sep=1pt] at (0,5) {$[-3,0,3,6,9]$};
\node (B) [fill=white!20,rounded corners=2pt,inner sep=1pt] at (-1.5,2.5) {$[-1,2,3,5,6]$};
\node (B2) [fill=white!20,rounded corners=2pt,inner sep=1pt] at (-1.5,4) {$[-2,1,4,5,7]$};
\node (C) [fill=white!20,rounded corners=2pt,inner sep=1pt] at (0,1.5) {$[0,2,3,4,6]$};
\draw[->] (B) to [bend right]  node[midway, left] {$0$} (A);
\draw[->] (C) to [bend right] node[midway, left] {$0$} (A);
\draw[->] (A) to [bend right] node[midway, right] {$1$} (C);
\draw[->] (B2) to node[midway, above] {$3$} (E);
\draw[->] (B2) to node[midway, right] {$0$} (C);
\draw[->] (C) to node[midway, left] {$3$} (D2);
\draw[->] (B) to node[midway, left] {$2$} (B2);
\draw[->] (D) -- (B) node[midway, above] {$1$};
\draw[->] (C) -- (B) node[midway, below] {$1$};
\draw[->] (D) -- (C) node[midway, below] {$0$};
\draw[->] (A) to [bend right] node[midway, right] {$2$} (D);
\draw[->] (E) -- (D2) node[midway, above] {$0$};
\draw[->] (D2) -- (D) node[midway, right] {$0$};
\draw[->] (D2) -- (B2) node[midway, above] {$1$};
\draw [->] (A) edge[loop below]node{$0$} (A);
\end{tikzpicture}\hfill
\begin{tikzpicture}[scale=1.5]
\node (A) [fill=white!20,rounded corners=2pt,inner sep=1pt] at (0,0) {$[1,2,3]$};
\node (D) [fill=white!20,rounded corners=2pt,inner sep=1pt] at (1.5,2.5) {$[0,1,5]$};
\node (D2) [fill=white!20,rounded corners=2pt,inner sep=1pt] at (1.5,4) {$[-1,1,6]$};
\node (E) [fill=white!20,rounded corners=2pt,inner sep=1pt] at (0,5) {$[-3,2,7]$};
\node (B) [fill=white!20,rounded corners=2pt,inner sep=1pt] at (-1.5,2.5) {$[-1,3,4]$};
\node (B2) [fill=white!20,rounded corners=2pt,inner sep=1pt] at (-1.5,4) {$[-2,3,5]$};
\node (C) [fill=white!20,rounded corners=2pt,inner sep=1pt] at (0,1.5) {$[0,2,4]$};
\draw[->] (B) to [bend right]  node[midway, left] {$0$} (A);
\draw[->] (C) to [bend right] node[midway, left] {$0$} (A);
\draw[->] (A) to [bend right] node[midway, right] {$1$} (C);
\draw[->] (B2) to node[midway, above] {$2$} (E);
\draw[->] (B2) to node[midway, right] {$0$} (C);
\draw[->] (C) to node[midway, left] {$2$} (D2);
\draw[->] (B) to node[midway, left] {$1$} (B2);
\draw[->] (D) -- (B) node[midway, above] {$1$};
\draw[->] (C) -- (B) node[midway, below] {$1$};
\draw[->] (D) -- (C) node[midway, below] {$0$};
\draw[->] (A) to [bend right] node[midway, right] {$2$} (D);
\draw[->] (E) -- (D2) node[midway, above] {$0$};
\draw[->] (D2) -- (D) node[midway, right] {$0$};
\draw[->] (D2) -- (B2) node[midway, above] {$1$};
\draw [->] (A) edge[loop below]node{$0$} (A);
\end{tikzpicture}
\caption{The seven dominant permutations whose inverses lie in $\So_3^5$ and $\So_5^3$, arranged in the directed graphs $\mathfrak{P}_3^5$ (left) and  $\mathfrak{P}_5^3$ (right).  The edge labels record the \emph{position} of the minimal element chosen.   Note that although the graphs are isomorphic as unlabeled direct graphs, the edge labels differ.  Each parking word in $\Park_{5}^3$ occurs as a unique directed cycle of length $3$ in $\mathfrak{P}_3^5$, while each parking word in $\Park_{3}^5$ occurs as a unique directed cycle of length $5$ in $\mathfrak{P}_5^3$.  Compare with~\Cref{fig:balanced2}.}
\label{fig:perms_balanced2}
\end{figure}

We now relate $(m,n)$-filters and the Sommers region, using the balanced representatives of $(m,n)$-filters.  We first use $(m,n)$-filters to understand \emph{dominant} affine permutations whose inverses lie in the Sommers region.

\begin{theorem}
   A dominant affine permutation $\aw \in \widetilde{\Sym}_n$ satisfies $\aw^{-1} \in \So_m^n$ if and only if \begin{align*}[\aw(1),\aw(2),\ldots,\aw(n)]&=n(\bal_\aw)\end{align*}
    for some balanced $(m,n)$-filter $\bal_\aw \in \Bal_m^n$.
    \label{thm:balanced_eq_sommers}
\end{theorem}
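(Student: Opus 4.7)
The plan is to exhibit the bijection explicitly. Given a dominant affine permutation $\aw$, the values $\aw(1) < \cdots < \aw(n)$ contain exactly one representative from each residue class modulo $n$ (since $\aw(k+n) = \aw(k)+n$ partitions $\ZZ$ into $n$ arithmetic progressions of step $n$). For each $i \in \ZZ/n\ZZ$ there is then a unique $c_i \in \{\aw(1), \ldots, \aw(n)\}$ with $c_i \equiv mi \pmod{n}$, and I would take $\bal_\aw$ to be the $(m,n)$-filter specified by column minima $c_0, c_1, \ldots, c_{n-1}$, so that $n(\bal_\aw) = \{c_0,\ldots,c_{n-1}\}$ as sorted lists. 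Level-closure along the lattice direction $(i,j) \mapsto (i+n, j-m)$ is automatic from this specification, so the only non-trivial filter axiom to verify is the order-filter condition $j_{i+1}^{\min} \leq j_i^{\min}$, which unwinds to the cyclic inequalities $c_{i+1} \leq c_i + m$ for all $i \in \ZZ/n\ZZ$.

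The heart of the argument is the equivalence of this cyclic family of inequalities with $\aw^{-1} \in \So_m^n$. By~\Cref{prop:affine_perm_in_so}, the Sommers condition says $\aw(a) - \aw(b) \neq m$ for every $a < b$ in $\ZZ$. If $c_{i+1} > c_i + m$ for some $i$, let $k_i \in \{1,\ldots,n\}$ be the preimage of $c_i$ and set $a = \aw^{-1}(c_i + m)$. Because $c_i + m$ and $c_{i+1}$ lie in the same residue class mod $n$ with $c_i + m < c_{i+1}$, the periodicity $\aw(k+n) = \aw(k) + n$ forces $a = k_{i+1} - \ell n$ for some integer $\ell \geq 1$, and hence $a \leq 0 < k_i$, so taking $b = k_i$ violates Sommers. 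Conversely, any Sommers violation $\aw(a) - \aw(b) = m$ with $a < b$ reduces, after shifting $b$ into $\{1,\ldots,n\}$ by periodicity, to exactly this form, via a short case analysis on the sign of $k_{i+1} - k_i$ (the boundary case $c_{i+1} = c_i + m$ yields $a = k_{i+1}$, which does not produce a pair with $a < b$).

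The balanced normalization is then automatic: the affine-permutation sum $\sum_{k=1}^n \aw(k) = \binom{n+1}{2}$ is literally the condition $\sum_{j \in n(\bal_\aw)} j = \binom{n+1}{2}$ from~\Cref{prop:balanced1}, and~\Cref{prop:balanced2} supplies the matching $m(\bal_\aw)$ sum for free. The reverse direction runs the same recipe backwards: given a balanced $\bal$, set $[\aw(1),\ldots,\aw(n)]$ equal to the sorted list $n(\bal)$ and extend by $\aw(k+n) = \aw(k)+n$; since $n(\bal)$ has one entry per residue class mod $n$, this defines a bijection of $\ZZ$, the balanced sum gives the correct affine-permutation normalization, and the filter inequality yields $\aw^{-1} \in \So_m^n$ by the equivalence above. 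The main obstacle is the bookkeeping in that key equivalence, in particular tracking how an arbitrary Sommers violation pulls back, via the periodicity of $\aw$, to the failure of $c_{i+1} \leq c_i + m$ at an adjacent pair of columns.
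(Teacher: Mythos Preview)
Your argument is correct and takes a genuinely different route from the paper. The paper proves the theorem inductively through the directed-graph machinery it has built: it checks the base case $\aw_m^n \leftrightarrow \bal_m^n$, observes that removing a minimal element of a balanced filter corresponds exactly to the edge operation in $\mathfrak{P}_m^n$ on the affine side, and then invokes the connectivity of $\mathfrak{F}_m^n$ (\Cref{lem:connected}) together with \Cref{def:directed_graph,def:directed_graph2}. You instead give a direct combinatorial dictionary between the Sommers inequality of \Cref{prop:affine_perm_in_so} and the order-filter axiom, encoded as the cyclic family $c_{i+1} \le c_i + m$ on column-minimum levels. Your approach is more self-contained (it needs neither \Cref{lem:connected} nor the graph $\mathfrak{P}_m^n$), while the paper's approach makes the compatibility with the directed-graph structure manifest, which is exactly what the subsequent results on filter tuples (\Cref{thm:and_bij}) build on.

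One presentational point: in your converse direction, after shifting $b$ into $\{1,\ldots,n\}$ and writing $a = k_{i+1} - \ell n$, the cases $\ell = 0$ and $\ell < 0$ are ruled out precisely because $\aw$ is \emph{dominant}. Indeed, $\ell = 0$ gives $c_{i+1} = c_i + m > c_i$, whence $k_{i+1} > k_i = b$ by monotonicity, contradicting $a = k_{i+1} < b$; and $\ell < 0$ gives $a \ge k_{i+1} + n > n \ge b$. It would strengthen your write-up to say this explicitly rather than folding it into ``a short case analysis on the sign of $k_{i+1} - k_i$,'' since dominance is what makes the equivalence sharp.
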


\begin{proof}
Note that the one-line notation of the element $\aw_m^n$ defined in \Cref{eq:wmn} is $n(\bal_m^n)$, where $\bal_m^n$ is the balanced $(m,n)$-filter generated by the points with level $\ell$ (see~\Cref{prop:balanced1}).  If we have $\aw = n(\bal)$ for some balanced $(m,n)$-filter, then the corresponding notion of minimal elements coincide, and acting on a minimal element of $\aw$ mirrors removing the corresponding minimal element of $\bal$.  The result now follows from~\Cref{def:directed_graph,def:directed_graph2}.
\end{proof}

Of course,~\Cref{thm:balanced_eq_sommers} applies equally well with the roles of $m$ and $n$ switched, and so we obtain an $(m{\leftrightarrow}n)$-bijection and a version of~\Cref{thm:dyck_paths} for dominant affine permutations whose inverses lie in the Sommers region.

\begin{proposition}
For $m$ and $n$ coprime, there is a bijection \begin{align*}\Big\{\aw \text{ dominant}: \aw^{-1} \in \So_m^n\Big\} &\leftrightarrow \Big\{\aw \text{ dominant}: \aw^{-1} \in \So_n^m\Big\}.\end{align*}  Furthermore, both sets have cardinality \[\frac{1}{n+m}\binom{n+m}{n}.\]
\label{thm:dominant_bij}
\end{proposition}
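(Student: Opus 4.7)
The plan is to chain together three bijections already established in the paper, using the balanced representatives as the bridge between the two sides and the $(m\leftrightarrow n)$-bijection as the bridge that swaps the roles of $m$ and $n$. Concretely, I would argue:
\[
\{\aw \text{ dominant}: \aw^{-1} \in \So_m^n\}
\;\longleftrightarrow\; \Bal_m^n
\;\longleftrightarrow\; \EF_m^n
\;\longleftrightarrow\; \EF_n^m
\;\longleftrightarrow\; \Bal_n^m
\;\longleftrightarrow\; \{\aw \text{ dominant}: \aw^{-1} \in \So_n^m\},
\]
where each arrow has already been supplied in an earlier section.

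First I would invoke \Cref{thm:balanced_eq_sommers}, which states that a dominant affine permutation $\aw$ has $\aw^{-1}\in\So_m^n$ precisely when its one-line notation equals $n(\bal_\aw)$ for some balanced $(m,n)$-filter $\bal_\aw$. Since the balanced $(m,n)$-filter is determined by its sorted list of column minima (which is exactly the one-line notation here), this association is a bijection between the left-hand set of the proposition and $\Bal_m^n$. Then by \Cref{prop:balanced2}, each equivalence class in $\EF_m^n$ contains a unique balanced representative, giving $\Bal_m^n \leftrightarrow \EF_m^n$. Now the $(m{\leftrightarrow}n)$-bijection, obtained by interchanging the two copies of $\ZZ$ in $\ZZ\times\ZZ$ (introduced just after \Cref{def:filters}), transports $\EF_m^n$ to $\EF_n^m$. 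Finally, running the same two-step identification in the other direction yields $\EF_n^m \leftrightarrow \Bal_n^m \leftrightarrow \{\aw \text{ dominant}: \aw^{-1} \in \So_n^m\}$. Composing these maps produces the desired bijection.

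For the cardinality claim, the chain above shows each of the two sets in the proposition is in bijection with $\EF_m^n$, whose size has already been computed as $\tfrac{1}{n+m}\binom{n+m}{n}$ in \Cref{thm:dyck_paths} via the cycle lemma.

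No genuine new obstacle arises: every link in the chain has been proved earlier. The only point worth being careful about is that under the $(m{\leftrightarrow}n)$-bijection, what was the ``row data'' $m(\idl)$ becomes ``column data'' for the swapped filter, so one must check that the balanced condition, being symmetric in $m$ and $n$ by \Cref{prop:balanced2}, is preserved by the swap; and that the map $\bal \mapsto n(\bal)$ used to recover a dominant affine permutation is consistent with the analogous $\bal \mapsto m(\bal)$ after swapping. Both facts are immediate from the definitions, so the proof reduces to composing the previously established bijections.
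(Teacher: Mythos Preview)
Your proposal is correct and follows essentially the same route as the paper: the paper's proof is simply the terse statement that ``the enumeration follows from \Cref{thm:balanced_eq_sommers}, and the bijection is induced by the map $n(\bal) \leftrightarrow m(\bal)$,'' which is exactly the composite you have written out in detail via $\Bal_m^n \leftrightarrow \EF_m^n \leftrightarrow \EF_n^m \leftrightarrow \Bal_n^m$. Your chain unpacks why that map lands in the right place and why the balanced condition is preserved under the swap, but the underlying idea is identical.
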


\begin{proof}
The enumeration follows from~\Cref{thm:balanced_eq_sommers}, and the bijection is induced by the map $n(\bal) \leftrightarrow m(\bal)$.
\end{proof}

\begin{example}
For example, looking at the balanced $(3,5)$-filter on the righthand side of~\Cref{fig:parking_function}, and disregarding the labels on the horizontal steps, the sorted list of the left-most level in each row gives $m(\bal)=[-1,3,4]$, while the sorted list of the bottom level in each column  gives $n(\bal)=[-1,2,3,5,6]$.
\end{example}

\begin{remark}
\Cref{thm:dominant_bij} is well-known in the language of simultaneous $(m,n)$-cores using the bijection between $n$-cores (respectively $m$-cores) and the coroot lattices of $\widetilde{\Sym}_n$ (respectively $\widetilde{\Sym}_m$).  This bijection of~\Cref{thm:dominant_bij} takes an element in $\widetilde{\Sym}_n$ associated to a particular simultaneous $(m,n)$-core and produces the corresponding element in $\widetilde{\Sym}_m$ associated to the same $(m,n)$-core.  We refer the reader to~\cite{anderson,misramiwa} and~\cite[Section 4]{armstrong2014results} for more details on cores and simultanous cores.
\end{remark}

\begin{remark}\label{rem:directly}
We can compute the bijection of~\Cref{thm:dominant_bij} directly on the one-line notation of an affine permutation $\aw$ by recording the $m$-minimal elements of $\aw$.  The sequence $\aw(1), \aw(2),\dots$ is obtained by recording the lowest entry of each column of $\bal_\aw$, in order, then the second-lowest entry of each column, and continuing in this way.  The first time an entry in a given congruence class is recorded is when we come to the leftmost entry of the corresponding row (i.e., an element of $m(\bal_\aw)$). Thus, the $3$-minimal elements of $\aw=[-1,2,3,5,6]$ are $[-1,3,4]$: \[\begin{array}{c|ccccc|cc} i & 1 & 2 & 3 & 4 & 5 & 6 & \ldots \\ \aw(i) & {\bf -1} & 2 & {\bf 3} & 5 & 6 & {\bf 4} & \ldots \\ \aw(i) \text{ mod } 3 & {\bf 2} & 2 & {\bf 0} & 2 & 0 &{\bf 1} & \ldots \end{array}.\]
Similarly, the $5$-minimal elements of $\aw=[-1,3,4]$ are $[-1,2,3,5,6]$:
\[\begin{array}{c|ccc|ccc|cc} i & 1 & 2 & 3 & 4 & 5 & 6 & 7  & \ldots \\ \aw(i) & {\bf -1} &{\bf 3} & 4 & {\bf 2}& {\bf 6} & 7 & {\bf 5} & \ldots \\ \aw(i) \text{ mod } 5 & {\bf 4} & {\bf 3} & 4 & {\bf 2} & {\bf 1} & 2 &{\bf 0}  & \ldots \end{array}.\]
\end{remark}

In fact,~\Cref{thm:balanced_eq_sommers} can be extended to the whole Sommers region if we pass from balanced $(m,n)$-filters to balanced $(m,n)$-filter tuples.

\begin{theorem}
   An affine permutation $\aw \in \widetilde{\Sym}_n$ satisfies $\aw^{-1} \in \So_m^n$ if and only if \begin{align*}[\aw(1),\aw(2),\ldots,\aw(n)]&=n(\prk_\aw)\end{align*} for some balanced $(m,n)$-filter tuple $\prk_\aw \in \BF_m^n$.
   \label{thm:and_bij}
\end{theorem}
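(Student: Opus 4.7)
The plan is to bootstrap~\Cref{thm:balanced_eq_sommers} from dominant affine permutations to all of $\So_m^n$ by constructing, for each such $\aw$, a balanced filter tuple $\prk_\aw$ whose sequence of removed levels matches the one-line notation of $\aw$. The first step is to show that the Sommers condition of~\Cref{prop:affine_perm_in_so} is preserved under any adjacent transposition that reduces the number of inversions: if $\aw^{-1}\in\So_m^n$ and $\aw(k)>\aw(k+1)$, a brief case analysis on the positions $i<j$ at which a putative violation $\aw''(i)-\aw''(j)=m$ could arise (splitting on the overlap with $\{k,k+1\}$) traces each case back to a forbidden difference already present in $\aw$. Sorting $[\aw(1),\ldots,\aw(n)]$ therefore yields a dominant $\aw'$ with $\aw'^{-1}\in\So_m^n$, and~\Cref{thm:balanced_eq_sommers} supplies a balanced filter $\bal_\aw\in\Bal_m^n$ whose $n$-notation equals this sorted list.

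Setting $\prk_\aw^{(0)}:=\bal_\aw$, I would define $\prk_\aw^{(i)}$ from $\prk_\aw^{(i-1)}$ by removing the level $p_{i-1}:=\aw(i)$ for $1\leq i\leq n$. The heart of the argument is that $\aw(i)$ is always a corner of $\prk_\aw^{(i-1)}$---both a column- and a row-minimum. The column-minimum claim is immediate, since $\aw(1),\ldots,\aw(n)$ occupy distinct residue classes modulo $n$ and so the previous removals touch columns disjoint from that of $\aw(i)$. For the row-minimum claim, I would reformulate the Sommers condition as the monotonicity $\aw^{-1}(v+m)>\aw^{-1}(v)$ for every $v\in\ZZ$; this places $\aw(i)-m$ at some position $q<i$ in the bi-infinite sequence. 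Writing $q=q_0+\ell n$ with $q_0\in[1,n]$ forces $\ell\leq 0$, and the column containing $\aw(i)-m$ (which is the column of $\aw(q_0)$) has column-minimum $\aw(q_0)$ or $\aw(q_0)+n$ in $\prk_\aw^{(i-1)}$; in each subcase a short inequality using $\aw(q_0)=\aw(i)-m-\ell n$ shows that this column-minimum strictly exceeds $\aw(i)-m$, so $\aw(i)-m\notin\prk_\aw^{(i-1)}$. The balanced condition $m(\prk_\aw^{(n)})=m(\prk_\aw^{(0)})+n$ then follows from the observation that removing all $n$ initial column-minima of $\bal_\aw$ is the same operation as translating $\bal_\aw$ by $(0,1)\in\ZZ\times\ZZ$, an operation that shifts every level by $n$.

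For the converse, suppose $n(\prk)=[\aw(1),\ldots,\aw(n)]$ for some $\prk\in\BF_m^n$, and assume toward a contradiction that $\aw(i)-\aw(j)=m$ for some $i<j$ (by periodicity we may take $i\in[1,n]$). Writing $j=j_0+\ell n$ with $j_0\in[1,n]$ and $\ell\geq 0$, the column-minimum of $\aw(j_0)$'s column in $\prk^{(i-1)}$ is $\aw(j_0)$ if $j_0\geq i$ and $\aw(j_0)+n$ otherwise; in either case the level $\aw(i)-m=\aw(j_0)+\ell n$ lies at or above this column-minimum and hence inside $\prk^{(i-1)}$. But $\aw(i)-m$ is in $\aw(i)$'s row and strictly below $\aw(i)$, contradicting $\aw(i)\in m(\prk^{(i-1)})$.

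I expect the main technical obstacle to be the row-minimum check in the forward direction, where Sommers monotonicity must be leveraged through a short case analysis on the residue and offset subcases of the position $q=q_0+\ell n$. The balanced condition and the converse, by contrast, reduce to straightforward column-minimum bookkeeping once the sorting argument establishes the base filter $\bal_\aw$.
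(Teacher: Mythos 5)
Your proof is correct, and it takes a genuinely different route from the paper. The paper's proof of~\Cref{thm:and_bij} establishes only the easy direction explicitly: given $\prk\in\BF_m^n$, it observes that two levels differing by $m$ lie in the same row, so the smaller must be removed first, which is exactly the condition of~\Cref{prop:affine_perm_in_so}. It then concludes the biconditional by a counting argument, comparing $|\BF_m^n|=m^{n-1}$ (from~\Cref{thm:number_parking_functions}; the paper's ``$m^n$'' there appears to be a typo) with the $m^{n-1}$ alcoves in $\So_m^n$. Your proof instead establishes both implications directly: it constructs $\prk_\aw$ explicitly by peeling off the levels $\aw(1),\dots,\aw(n)$, verifies at each stage that $\aw(i)$ is both a column- and row-minimum, and proves the converse by tracing a hypothetical Sommers violation to a contradiction with row-minimality. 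This buys you a proof that is constructive and independent of the enumeration in~\Cref{thm:number_parking_functions}, at the cost of being substantially longer and requiring the auxiliary lemma that sorting preserves the Sommers condition (needed to invoke~\Cref{thm:balanced_eq_sommers}), which the paper never needs since it only ever goes from filter tuples to permutations. Both arguments are valid; the paper's is shorter, yours is more self-contained.
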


\begin{proof}
Choose $\prk\in\BF_m^n$.  Now $n(\prk)$ is the short one-line notation of an affine permutation $w$ since $n(\prk)$ is a permutation of 
$n(\prk^{(0)})$ and $\prk^{(0)}$ is balanced.  We can think of the sequence $w(1),w(2),\dots$ as being obtained by recording the levels removed from $n(\prk^{(0)})$ by repeatedly removing boxes in the order specified by $\prk$.  (In this way, $w(1)$ through $w(n)$ are the levels removed on the first pass, $w(n+1)$,\dots, $w(2n)$ are the levels removed on the second pass, and so on.)  Since levels that differ by $m$ lie in the same row, the smaller is necessarily removed before the larger, guaranteeing that the condition of~\Cref{prop:affine_perm_in_so} is satisfied, so $w^{-1}\in \So_m^n$.

Now $\So_m^n$ is an $m$-fold dilation of the fundamental alcove in $\mathbb{R}^{n-1}$, and so contains $m^{n-1}$ affine permutations.  Since $\prk\mapsto n(\prk)$ is a bijection and $|\BF_m^n|=m^{n-1}$ by~\Cref{thm:number_parking_functions}, we conclude the result.
\end{proof}

\begin{remark}
\label{rem:cycles_affine}
Since $\mathfrak{P}_n^m \cong \mathfrak{F}_n^m$ as unlabeled directed graphs, by~\Cref{def:parking} we can interpret affine elements $\aw$ with $\aw^{-1} \in \So_m^n$ as cycles of $n$ vertices in the directed graph $\mathfrak{P}_n^m$ (we recall that vertices of $\mathfrak{P}_n^m$ are short one-line notation of permutations in $\So_n^m$), with a choice of initial vertex.  The short one-line notation of $\aw$ is given by reading the $m$-minimal element chosen for the edge (undoing the rebalancing that occurs at each step).

For example, reproducing~\Cref{rem:park_cycles} with $m=3$ and $n=5$ (see also~\Cref{fig:parking_functions}), the 5-cycle in $\mathfrak{P}_5^3$ with removable $3$-minimal elements in bold
\begin{alignat*}{6}
& [-1,{\bf 3},4] &&\to [{\bf -2},3,5] &&\to [{\bf 0},2,4] &&\to [1,{\bf 2},3] &&\to [0,{\bf 2},4] &&\to [-1,3,4] \text{ (rebalanced)}\\
&+0 &&\phantom{\to} +1 &&\phantom{\to} +2 &&\phantom{\to} +3 &&\phantom{\to} +4 &&\phantom{\to} +5\\
&[-1,{\bf 3},4] &&\to [{\bf -1},4,6] &&\to [{\bf 2},4,6] &&\to [4,{\bf 5},6] &&\to [4,{\bf 6},8] &&\to [4,8,9] \text{ (not rebalanced)}
\end{alignat*}
produces the short one-line notation of the affine Weyl group element \[\aw = [3,-1,2,5,6] \in \ASym_5.\]
\end{remark}

\subsection{Parking Words from the Sommers Region}
\label{sec:sommers_parking_words}

Using~\Cref{thm:and_bij}, we can easily restate the maps $A$ and $B$ from~\Cref{sec:mapa,sec:mapb}---originally defined on parking $(m,n)$-tuple filters---in the language of affine permutations.  These maps originally appeared in this form in~\cite{gorsky2016affine}.

\begin{remark}
There are \emph{many} statistics one can define on Dyck paths and parking functions (in their various combinatorial manifestations).  In~\cite{armstrong2013hyperplane} for $(m,n)=(n{+}1,n)$, Armstrong introduced statistics on the affine symmetric group that corresponded to what Haglund and Loehr called $\area'$ and $\bounce$ in~\cite{haglund2005conjectured}.  Armstrong suggested that his statistics would recover work in the $(kn+1,n)$ case, previously considered by Loehr and Remmel in~\cite{loehr2004conjectured}.  By using the relationship between Shi arrangements and Sommers regions, Gorsky, Mazin, and Vazirani generalized Armstrong's constructions to general coprime $(m,n)$---and called the statistics $\dinv$ and $\area$ (see~\Cref{sec:zeta_history}).  Finally, we note that the paper~\cite{armstrong2016rational} also defines statistics for general coprime $(m,n)$, but doesn't define a zeta map on $(m,n)$-parking paths or words.
\end{remark}

\subsubsection{The Map $\Astar$: the Anderson Labeling}
\label{sec:anderson2}
Translating~\Cref{def:parka} using the bijection of~\Cref{thm:and_bij} gives the following definition (compare with~\cite[Section 3.1]{gorsky2016affine}).

\begin{definition}
Let $\aw \in \widetilde{\Sym}_n$ be given with $\aw^{-1} \in \So_m^n$. Then $\Astar(\aw)$ is defined by
\[\aw \xmapsto{\Astar} \left[a \cdot \aw'(1), a \cdot \aw'(2),\ldots, a \cdot \aw'(n) \right] \mod m,\] where $\aw'(i)=\aw(i)-\min\{\aw(1),\aw(2),\ldots,\aw(n)\}$ and $a n = -1 \mod m$.
\end{definition}

\begin{example}
As in~\Cref{ex:mapa}, for $\aw = [3,-1,2,5,6]$ with $\aw^{-1}=[0,3,1,7,4] \in \So_3^5$ and $\aw' = [3+1,-1+1,2+1,5+1,6+1] =[4,0,3,6,7]$, since $1 \cdot 5 = -1 \mod 3$ we compute $\Astar(\aw)$ as
\[\aw = [3,-1,2,5,6] \xmapsto{\Astar} [1\cdot 4,1\cdot 0,1\cdot 3,1\cdot 6,1\cdot 7] \mod 3 = [1,0,0,0,1].\]  Using $3 \cdot 3 = -1 \mod 5$, we also find $\Astar(\aw)$ for $\aw=[-1,3,4]$ with $\aw^{-1}=[0,4,2]\in \So_5^3$ and $\aw'=[0,4,5]$:
\[\aw = [-1,3,4] \xmapsto{\Astar} [3\cdot 0,3\cdot 4,3\cdot 5] \mod 5 = [0,2,0].\]
\end{example}

If the short one-line notations of $\aw_1$ and $\aw_2$ are permutations of each other, then so are $\Astar(\aw_1)$ and $\Astar(\aw_2)$, so that elements in the same coset of $\ASym_n/\Sym_n$ are assigned to the same $(m,n)$-parking word by $\Astar$, up to a permutation.  It follows from~\Cref{sec:mapa} and~\Cref{thm:and_bij} that $\Astar$ is a bijection; this is illustrated for $(m,n)=(4,3)$ and $(5,3)$ in~\Cref{fig:anderson}.

\begin{theorem}
For $m$ and $n$ relatively prime, the map
\begin{align*}A^*: \So_m^n &\to \Park_m^n\\ \aw &\mapsto A(\aw^{-1})\end{align*} is a bijection.
\end{theorem}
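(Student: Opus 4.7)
The plan is to exhibit the map $A$ on $\So_m^n$ as a composition of three bijections that are already established, and then verify that the resulting composition agrees with the explicit formula in the definition. First, \Cref{thm:and_bij} supplies a bijection
\[
\big\{\aw \in \ASym_n : \aw^{-1} \in \So_m^n\big\} \;\longleftrightarrow\; \BF_m^n, \qquad \aw \mapsto \prk_\aw,
\]
where $\prk_\aw$ is the unique balanced $(m,n)$-filter tuple with $n(\prk_\aw)=[\aw(1),\ldots,\aw(n)]$. Second, every equivalence class in $\ET_m^n$ contains a unique balanced representative (\Cref{prop:balanced2}) and a unique Dyck representative (\Cref{thm:dyck_paths}); passing from the balanced to the Dyck representative within each class yields a bijection $\BF_m^n \leftrightarrow \PF_m^n$. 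Third, the parking-tuple version of $A$ in \Cref{def:parka} is a bijection $\PF_m^n \to \Park_m^n$, as recorded immediately after its definition via the interpretation of parking tuples as parking paths (\Cref{rem:park_pic}). Composing these three bijections gives a bijection $\So_m^n \to \Park_m^n$.

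It remains to verify that this composition coincides with the formula stated in the definition. Given $\aw$ with $\aw^{-1}\in\So_m^n$, form $\prk_\aw \in \BF_m^n$ as above, and let $\prk$ be the equivalent Dyck $(m,n)$-filter tuple. Since $\gcd(m,n)=1$, the translations of $\ZZ\times\ZZ$ act transitively on levels, so the unique translation sending $\prk_\aw$ to $\prk$ shifts every level by the same integer; because the minimum level of a Dyck filter is $0$ while the minimum of $n(\prk_\aw)$ is $k := \min\{\aw(1),\ldots,\aw(n)\}$, this shift is precisely $-k$. Consequently $n(\prk)=[\aw'(1),\ldots,\aw'(n)]$ with $\aw'(i)=\aw(i)-k$. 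Applying the parking-tuple formula $n(\prk)=[p_1,\ldots,p_n]\mapsto[ap_1,\ldots,ap_n]\bmod m$ (where $an\equiv -1\bmod m$) reproduces the definition of $A(\aw)$ verbatim.

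There is essentially no serious obstacle: the substance has been done in \Cref{thm:and_bij}, \Cref{prop:balanced2}, \Cref{thm:dyck_paths}, and \Cref{def:parka}. The only point requiring care is the bookkeeping step of identifying the translation relating the balanced representative to the Dyck representative with the scalar shift by $-k$. If preferred, one can bypass the Dyck representative altogether: the parking-tuple formula in \Cref{def:parka} respects the simultaneous addition of any integer to every level modulo $m$ only up to a global additive constant, so one may equivalently apply $a\cdot(-)\bmod m$ directly to $[\aw(1),\ldots,\aw(n)]$ and recover the same word up to a global additive shift; the normalization $\aw'(i)=\aw(i)-k$ simply pins down the canonical representative in $\Park_m^n$.
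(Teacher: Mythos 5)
Your proof is correct and follows essentially the same route as the paper: the paper observes that the theorem follows from combining the bijection of \Cref{thm:and_bij} (between $\So_m^n$ and balanced $(m,n)$-filter tuples) with the bijection of \Cref{def:parka} (between parking $(m,n)$-filter tuples and $\Park_m^n$), noting that the affine-permutation version of $A$ is exactly the translation of \Cref{def:parka} under \Cref{thm:and_bij}. You spell out the intermediate step (passing from the balanced to the Dyck representative, with the explicit $-k$ shift) that the paper leaves implicit, which is a worthwhile clarification; just note that the paper uses $\TT_m^n$ rather than $\ET_m^n$ for the set of filter tuples, and the paper never actually employs the notation $\ET_m^n$.
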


There is a more geometric way to recover the parking word $\Astar(\aw)$, which we quickly sketch.  There is a natural bijection between dominant affine permutations in $\widetilde{\Sym}_n$ and the coroot lattice $\check{Q}:=\left\{\mathbf{x} \in \mathbb{Z}^n : \sum_{i=1}^n x_i = 0\right\}$: \[\aw \in \widetilde{\Sym}_n \mapsto \aw^{-1}(\mathbf{0}),\] where $\mathbf{0}=(0,0,\ldots,0) \in \check{Q}$.  This extends to a bijection between affine permutations and $\Sym_n \ltimes \check{Q}$.  The restriction of this bijection to the permutations whose inverses lie in the Sommers region $\So_m^n$ gives a set of representatives for $\check{Q}/m\check{Q}$, which are in bijection with $(m,n)$-parking words using natural coordinates and the cycle lemma.  We refer the reader to~\cite{haiman1994conjectures,gorsky2016affine,thiel2016anderson} for more details relating to this construction.

\begin{figure}[htb]
\includegraphics[width=.45\textwidth]{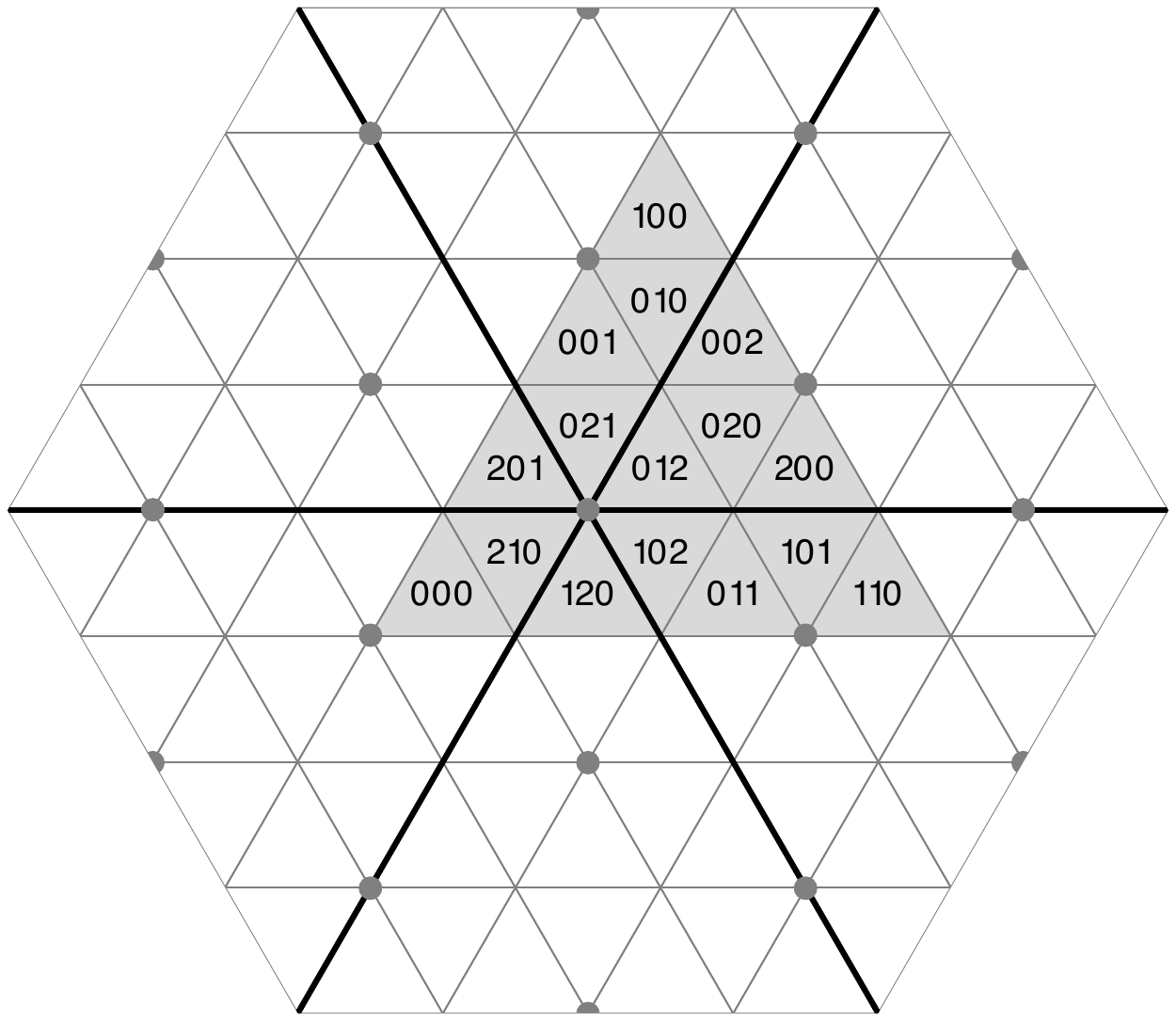}\includegraphics[width=.45\textwidth]{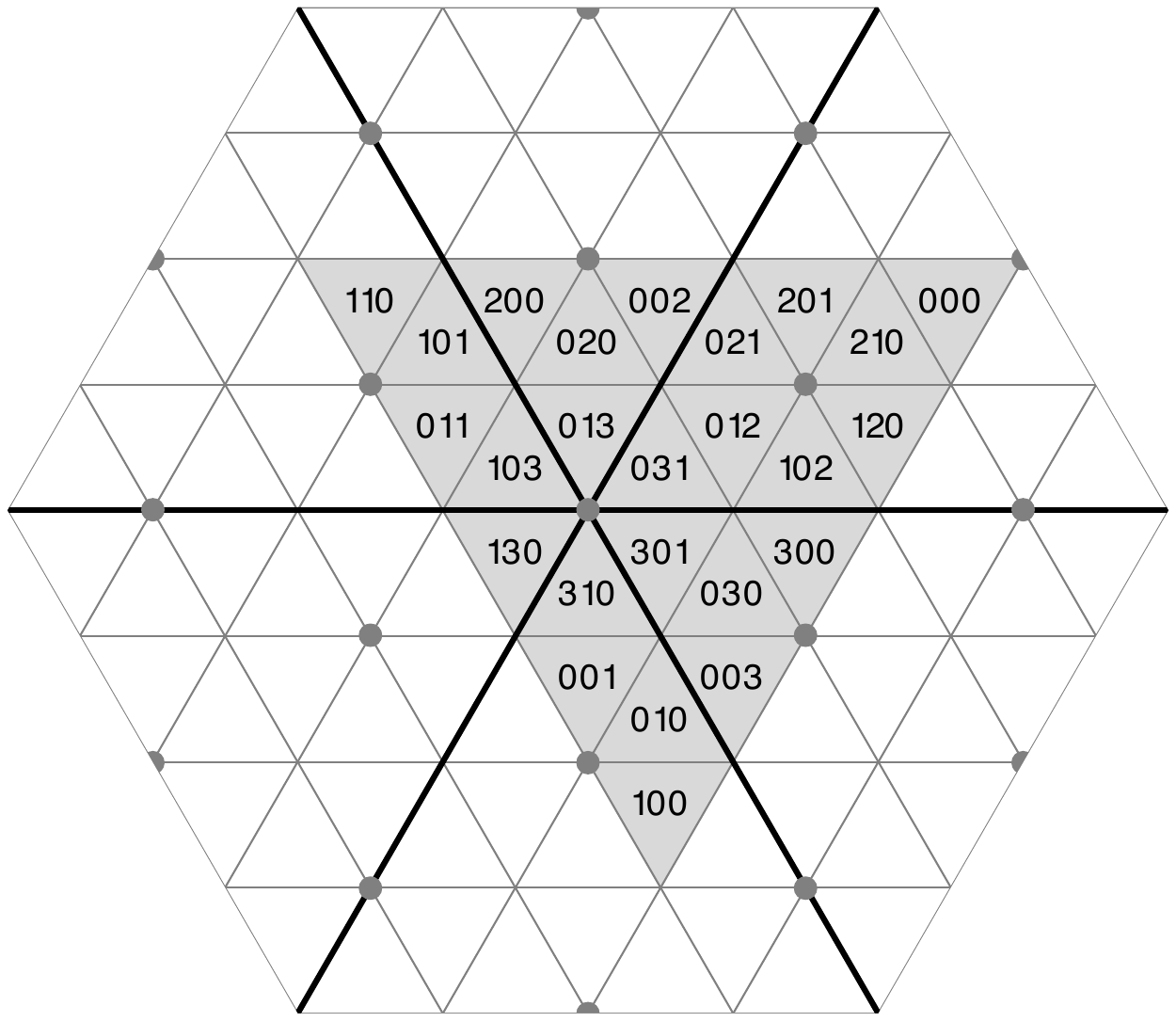}
\caption{The Sommers regions $\So_4^3$ and $\So_5^3$, with alcoves labeled by parking words under the Anderson bijection $\Astar$.}
\label{fig:anderson}
\end{figure}

\subsubsection{The Map $\Bstar$: the Pak-Stanley Labeling}
\label{sec:stanley_pak}
It is natural to ask for a \emph{bijective} proof for the number of Shi regions---for example, via a bijection betwen Shi regions and $(n{+}1,n)$-parking words.  Pak and Stanley found such a labeling of the Shi regions~\cite[Theorem 5.1]{stanley1996hyperplane}, which Stanley later extended to the Fuss level of generality~\cite{stanley1998hyperplane}.  Using the correspondence between the minimal alcoves of the Shi arrangement and the Sommers region, the Pak-Stanley labeling was finally extended to the rational level in~\cite{gorsky2016affine} as an affine analogue of the \defn{code} of a permutation.

\begin{definition}\label{def:sp}
For $\aw \in \widetilde{\Sym}_n$ with $\aw^{-1}\in \So_m^n$, $\Bstar(\aw)$ is defined by
\begin{align*}
	\aw &\xmapsto{B} \pp_1 \ldots \pp_n,
 \end{align*}
where for $1 \leq i \leq n$, \[\pp_i = \left|\left\{ j : j>i \text{ and } 0<\aw(i) - \aw(j)<m \right\}\right|.\]
\end{definition}

Using the correspondence between inversions and hyperplanes, $\pp_i$ counts the number of hyperplanes of the form $\mathcal{H}_{i,j}^k$ of height less than $m$ separating the alcove corresponding to $\aw^{-1}$ from the fundamental alcove.  The Pak-Stanley labeling of the Sommers region is illustrated in the cases $(m,n)=(4,3)$ and $(5,3)$ in~\Cref{fig:stanley_pak}.

\begin{figure}[htb]
\includegraphics[width=.45\textwidth]{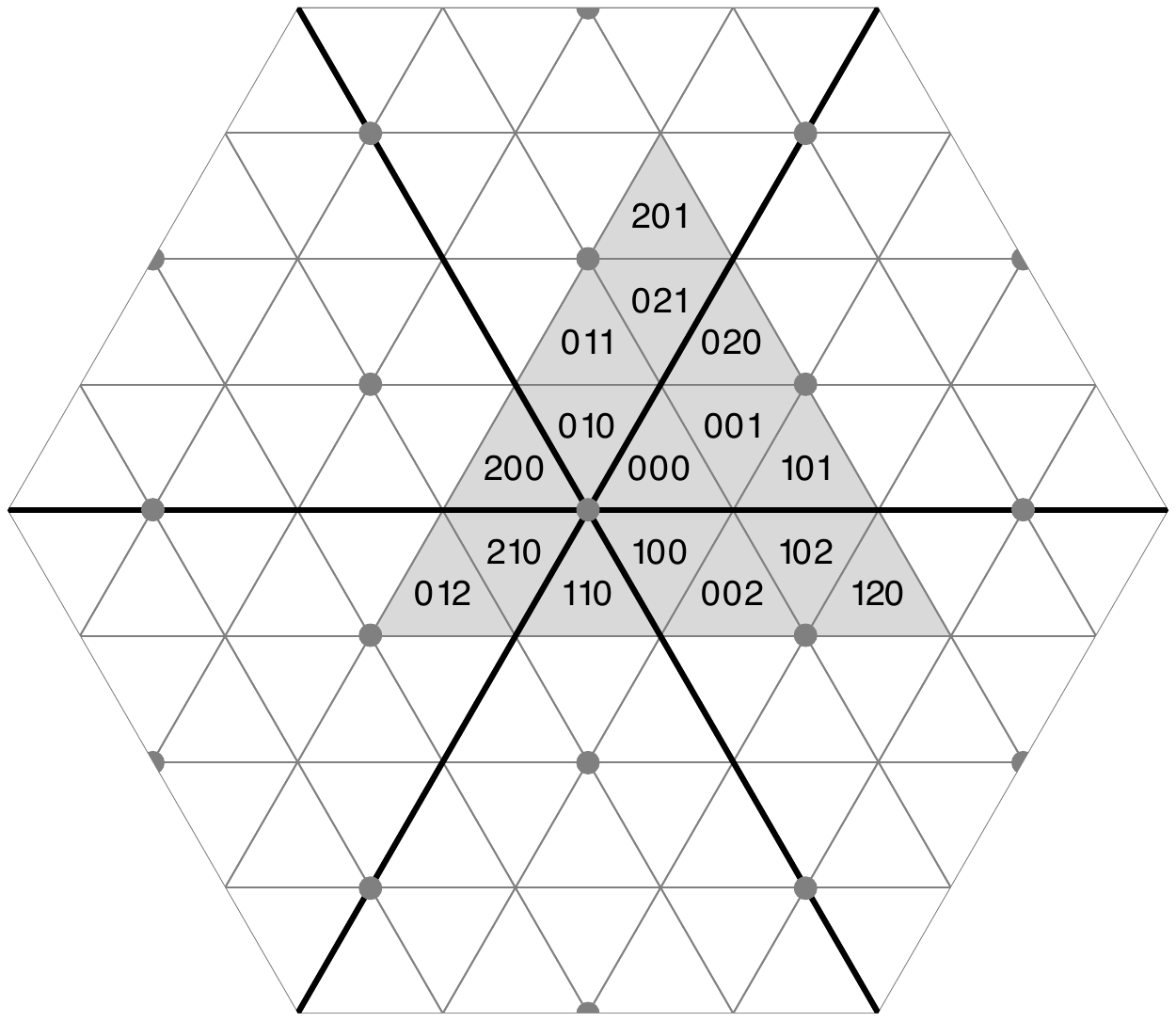}\includegraphics[width=.45\textwidth]{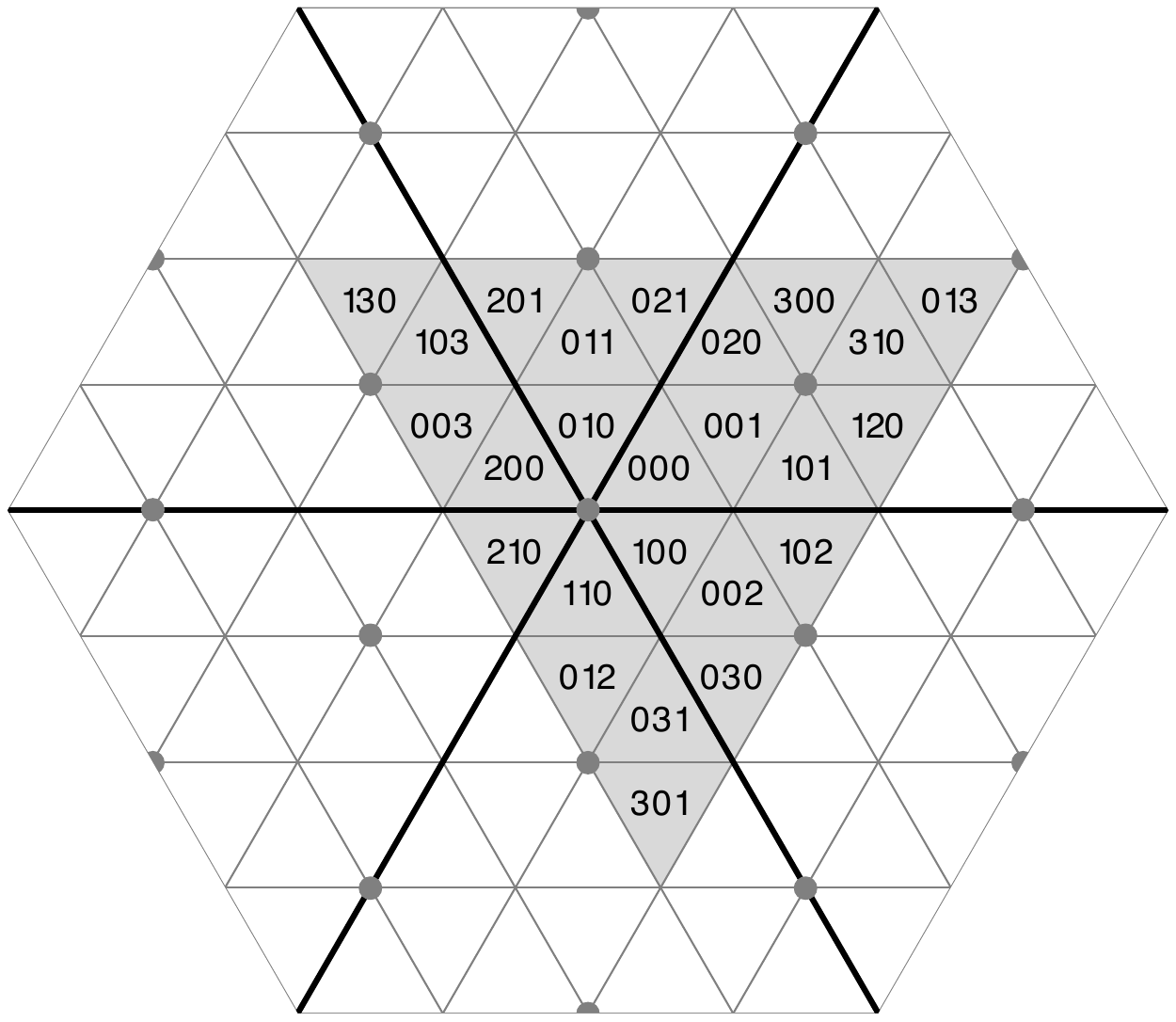}
\caption{The Sommers regions $\So_4^3$ and $\So_5^3$, with alcoves labeled by parking functions under the Pak-Stanley bijection $\Bstar$.}
\label{fig:stanley_pak}
\end{figure}

We will now show that $\Bstar(\aw)$ in~\Cref{def:sp} is equivalent to $B(\prk_\aw)$ in~\Cref{def:parkb} under the bijection in~\Cref{thm:and_bij}.

\begin{theorem}\label{thm:b_eq}
For any $\aw \in \widetilde{\Sym}_n$ with $\aw^{-1} \in \So_m^n$, we have that $\Bstar(\aw) = B(\prk_\aw),$ where $\prk_\aw$ is the $(m,n)$-filter tuple with $[w(1),w(2),\ldots,w(n)]=n(\prk_\aw)$.
\end{theorem}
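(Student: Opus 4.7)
The plan is to reduce both $B(\aw)_i$ and $B(\prk_\aw)_i$ to the same count over residue classes modulo $m$. By construction of $\prk_\aw$, the $i$-th level removed from $\prk_\aw^{(0)}$ is $p_i = \aw(i)$, so $B(\prk_\aw)_i$ counts the number of elements of $m(\prk_\aw^{(i-1)})$ strictly less than $\aw(i)$. The crucial step is to prove by induction on $i$ the following \emph{affine description}: for each residue class $\rho$ modulo $m$, the unique element of $m(\prk_\aw^{(i-1)})$ in class $\rho$ equals the smallest value of $\aw$ in class $\rho$ occurring at a position in $\{i, i+1, i+2, \ldots\}$. The base case $i=1$ is the statement that $m(\prk_\aw^{(0)})$ consists of the $m$-minimal elements of $\aw$, which follows from~\Cref{thm:balanced_eq_sommers} applied to the dominant permutation with the same image as $\aw$ (since $\prk_\aw^{(0)}$ depends only on the multiset $\{\aw(1), \ldots, \aw(n)\}$).

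The inductive step reduces to verifying that the filter update $\aw(i) \mapsto \aw(i) + m$ records the next value of $\aw$ in the residue class of $\aw(i)$ at positions $> i$. This is where the Sommers condition does its work. By~\Cref{prop:affine_perm_in_so}, the hypothesis $\aw^{-1} \in \So_m^n$ forbids $\aw(i') - \aw(j) = m$ for $i' < j$; applied to all pairs within a single residue class, this implies that within each class modulo $m$, the values of $\aw$ at positive positions form an arithmetic progression $v_0, v_0 + m, v_0 + 2m, \ldots$ occurring in strictly increasing positions. In particular, $\aw(i) + m$ must appear at some position strictly greater than $i$, and is the smallest value in class $\aw(i) \bmod m$ occurring at positions $> i$, completing the induction.

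With the affine description in hand, a short bijective argument finishes the proof. Let $\rho_i := \aw(i) \bmod m$, and for each $\rho \ne \rho_i$ let $v_\rho$ denote the unique integer of class $\rho$ in the interval $(\aw(i) - m, \aw(i))$; then~\Cref{def:sp} reads $B(\aw)_i = |\{\rho \ne \rho_i : v_\rho = \aw(j) \text{ for some } j > i\}|$. Writing $r_\rho$ for the element of $m(\prk_\aw^{(i-1)})$ in class $\rho$, the affine description gives $B(\prk_\aw)_i = |\{\rho \ne \rho_i : r_\rho < \aw(i)\}|$. These two counts coincide term by term: if $r_\rho < \aw(i)$, then $v_\rho$ appears in the arithmetic progression $r_\rho, r_\rho + m, r_\rho + 2m, \ldots$ of values of $\aw$ at positions $\geq i$; conversely, if $v_\rho = \aw(j)$ with $j > i$, then $r_\rho \leq v_\rho < \aw(i)$. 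The main obstacle is the inductive step in the second paragraph, since everything else amounts to bookkeeping once the Sommers condition is parsed as a structural statement about the order of values along residue classes.
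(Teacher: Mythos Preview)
Your proof is correct and follows essentially the same strategy as the paper's: both reduce the equality $B(\aw)_i = B(\prk_\aw)_i$ to a count over residue classes modulo $m$. The paper's argument is a terse sketch appealing to the isomorphism $\mathfrak{P}_n^m \cong \mathfrak{F}_m^n$ and the cycle interpretation, asserting that ``the number of residue classes which contribute is also the relative order of the number removed''; your version makes this explicit by isolating and proving the \emph{affine description} (that the element of $m(\prk_\aw^{(i-1)})$ in each residue class is the smallest value of $\aw$ in that class at positions $\geq i$) as an inductive lemma, with the Sommers condition doing the work in the inductive step.

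One minor remark on the base case: your appeal to \Cref{thm:balanced_eq_sommers} is slightly oblique, since that theorem identifies $n(\bal_{\overline{\aw}})$ rather than $m(\bal_{\overline{\aw}})$. The cleanest justification is direct: $\ell(\prk_\aw^{(0)}) = n(\prk_\aw^{(0)}) + n\ZZ_{\geq 0} = \{\aw(i) : i \geq 1\}$ by periodicity of $\aw$, whence $m(\prk_\aw^{(0)})$ is by definition the set of $m$-minimal elements of $\aw$. This is the content of \Cref{rem:directly} in the paper. Your argument is not wrong, just one step less direct than it could be.
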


\begin{proof}
\Cref{rem:cycles_affine} gives a bijection between $n$-cycles in $\mathfrak{P}_n^m$ and affine permutations $\aw \in \ASym_n$ with $\aw^{-1} \in \So^n_m$.  Since $\mathfrak{P}_n^m \cong \mathfrak{F}_n^m$, we can see $B(\prk_\aw)$ directly on the $n$-cycle. 
Fix $1 \leq i \leq m$.  At most one element from each residue class modulo $n$ in the one-line notation of $\aw$ can contribute to 
$\pp_i$.  The number of residue classes which contribute (which equals $\pp_i$) is also the position of the number removed when calculating $B(\prk_\aw)$.
\end{proof}

\begin{remark}
Continuing~\Cref{rem:cycles_affine}, we interpret parking words $\pp \in \Park_m^n$ (of length $n$) as cycles with $n$ vertices in the directed graph $\mathfrak{P}_n^m$.  The word is obtained by recording the position of the element chosen for the edge.  For example, for $(m,n)=(3,5)$ and $\aw = [3,-1,2,5,6]$ with $\aw^{-1}=[0,3,1,7,4] \in \So_3^5$, recording the position of the element removed computes the parking word $B(\prk_\aw)$ from the $5$-cycle encoding the corresponding parking $(3,5)$-filter tuple $\prk_\aw$:

\begin{alignat*}{7}
 & [-1,{\bf 3},4] &&\to [{\bf -2},3,5] &&\to [{\bf 0},2,4] &&\to [1,{\bf 2},3] &&\to [0,{\bf 2},4] &&\to [-1,3,4]\\
&+0 &&\phantom{\to} +1 &&\phantom{\to} +2 &&\phantom{\to} +3 &&\phantom{\to} +4 &&\phantom{\to} +5\\
 & [-1,{\bf 3},4] &&\to [{\bf -1},4,6] &&\to [{\bf 2},4,6] &&\to [4,{\bf 5},6] &&\to [4,{\bf 6},8] &&\to [4,8,9] \\
B(\prk_\aw) :&  \phantom{+}1 && \phantom{\to+}0 && \phantom{\to+}0 && \phantom{\to+}1 && \phantom{\to+}1 &&
\end{alignat*}
On the other hand, we can compute $B(\aw)=\pp_1\ldots\pp_5$ by extending the short one-line notation of $\aw$.  \Cref{thm:b_eq} tells us that the results of these two calculations agree.

\[\xymatrix@C=1em@R=1.5em{ i \ar@{-}[]+<2em,+1em>;[dd]+<2em,-1em> &  1 & 2 & 3 & 4 & 5 \ar@{--}[]+<1em,+1em>;[dd]+<1em,-1em> & 6 & 7 & 8 & 9 & 10 \\  \aw(i) & {\bf 3}\ar@/_1pc/[rr] & {\bf -1} & 2 & 5 \ar@/_2pc/[rrr] & 6 \ar@/_1pc/[rr] & 8 & {\bf 4} & 7 & 10 & 11 \\ \pp_i & 1 & 0 & 0 & 1 & 1 & & & &}.\]

The letters in the one-line notation of $w$ that occur in $\prk_\aw^{(0)}$ are written in bold, and we have marked the inversions that count towards $\Bstar(\aw)$ using arrows.  Note that the inversion $(i,j)=(1,2)$ doesn't count towards $\Bstar(\aw)$ because $\aw(1)-\aw(2)=3-(-1)\geq 3$.
\label{ex:b2}
\end{remark}

Now \Cref{thm:b_eq,thm:b_is_bij} imply that $\Bstar$ is a bijection from affine permutations whose inverse lies in $\So_m^n$ to $(m,n)$-parking words.  This resolves~\cite[Conjecture 1.4]{gorsky2016affine}.

\begin{theorem}[{\cite[Conjecture 1.4]{gorsky2016affine}}]
    For $m$ and $n$ relatively prime, the map
\begin{align*}B^*: \So_m^n &\to \Park_m^n\\ \aw &\mapsto B(\aw^{-1})\end{align*} is a bijection.
\label{thm:rational_code}
\end{theorem}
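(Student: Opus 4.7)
The plan is to obtain this theorem essentially as a bookkeeping consequence of results already assembled. The map $B: \So_m^n \to \Park_m^n$ factors through the intermediate combinatorial objects, and every factor is a bijection that has been established earlier.

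First, I would invoke~\Cref{thm:and_bij} to identify $\So_m^n$ with $\BF_m^n$: each alcove is labeled by some $\aw^{-1}$, and the short one-line notation $[\aw(1),\ldots,\aw(n)]$ equals $n(\prk_\aw)$ for a unique balanced $(m,n)$-filter tuple $\prk_\aw$. Since every equivalence class of $(m,n)$-filter tuples contains a unique balanced representative and a unique parking representative (the observation following~\Cref{def:parking}, itself a consequence of~\Cref{thm:dyck_paths} and~\Cref{prop:balanced2}), the correspondence $\aw \mapsto \prk_\aw$ composes with rebalancing-to-parking to yield a bijection $\So_m^n \to \PF_m^n$.

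Second, I would appeal to~\Cref{thm:b_eq}, which states that under this identification the Pak-Stanley labeling $B(\aw)$ defined from inversions of height less than $m$ in~\Cref{def:sp} coincides with the combinatorial map $B(\prk_\aw)$ from~\Cref{def:parkb}. In other words, the triangle
\[
\So_m^n \longrightarrow \PF_m^n \xrightarrow{\ B\ } \Park_m^n
\]
commutes with the direct map $B: \So_m^n \to \Park_m^n$.

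Finally, I would apply~\Cref{thm:b_is_bij}, the combinatorial content of which is precisely that $B: \PF_m^n \to \Park_m^n$ is a bijection. Composing with the bijection $\So_m^n \to \PF_m^n$ established above yields the desired bijectivity, which resolves~\cite[Conjecture 1.4]{gorsky2016affine}. There is no genuine obstacle here: the real work was packaged into~\Cref{thm:park_char} (which underwrites the bijectivity of $B$ on $\PF_m^n$) and into~\Cref{thm:and_bij} (which identifies alcoves in $\So_m^n$ with balanced filter tuples). The present theorem is thus a short corollary whose only task is to verify that the two definitions of $B$ line up, which is exactly the content of~\Cref{thm:b_eq}.
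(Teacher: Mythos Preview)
Your proposal is correct and matches the paper's own argument: the paper simply states that \Cref{thm:b_eq} and \Cref{thm:b_is_bij} together imply the result, which is exactly the factorization you describe. Your write-up just makes explicit the role of \Cref{thm:and_bij} in passing between $\So_m^n$ and $\PF_m^n$, a step the paper leaves implicit in its invocation of \Cref{thm:b_eq}.
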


\begin{remark}
 In~\cite[Section 7.1]{gorsky2016affine}, Gorsky, Mazin, and Vazirani provide a conjectural algorithm to invert $\Bstar$.  Their Conjecture 7.9 (which essentially says that their algorithm succeeds) follows now from our~\Cref{thm:b_is_bij} and the convergence proved in~\Cref{lem:rp_fixed} and ~\Cref{lem:rp_unique}.
 \end{remark}

\section*{Acknowledgements}

We thank Drew Armstrong for the clarity of his exposition, Marko Thiel and Robin Sulzgruber for their input on the history of zeta maps, and Adriano Garsia, Eugene Gorsky, Nick Loehr, Mikhail Mazin, Igor Pak, Monica Vazirani, and Greg Warrington for many inspiring conversations.  We thank an anonymous referee for their many helpful and detailed comments which improved the paper.

H.T. was partially supported by the Canada Research Chair grant CRC-2014-00042 and NSERC Discovery Grant RGPIN-2016-04872.  This research was supported in part by the National Science Foundation under Grant No. NSF PHY17-48958.  N.W was partially supported by Simons Foundation grant 585380.  We also gratefully acknowledge SageDays{@}ICERM and the Centre de Recherches Math\'emathiques.



\newcommand{\etalchar}[1]{$^{#1}$}
\providecommand{\bysame}{\leavevmode\hbox to3em{\hrulefill}\thinspace}
\providecommand{\MR}{\relax\ifhmode\unskip\space\fi MR }
\providecommand{\MRhref}[2]{%
  \href{http://www.ams.org/mathscinet-getitem?mr=#1}{#2}
}
\providecommand{\href}[2]{#2}
\begin{thebibliography}{99}

\bibitem{anderson}
{\em J.~Anderson.}
\newblock Partitions which are simultaneously $t_1$-and $t_2$-core.
\newblock {Discrete Mathematics}, {\bf 248.1-3} (2002), 237--243.

\bibitem{armstrong2013hyperplane}
{\em D.~Armstrong.}
\newblock Hyperplane arrangements and diagonal harmonics.
\newblock {Journal of Combinatorics}, {\bf 4(2)} (2013).

\bibitem{armstrong2014results}
{\em D.~Armstrong, C.~R. Hanusa, and B.~C. Jones.}
\newblock Results and conjectures on simultaneous core partitions.
\newblock {European Journal of Combinatorics}, {\bf 41} (2014), 205--220.

\bibitem{armstrong2015sweep}
{\em {D.~Armstrong, N.~Loehr, and G.~S. Warrington.}
{\em \newblock Sweep maps: A continuous family of sorting algorithms.
\newblock {Advances in Mathematics}, {\bf 284} (2015), 159--185.

\bibitem{armstrong2016rational}
{\em D.~Armstrong, N.~Loehr, and G.~S. Warrington.}
\newblock Rational parking functions and {C}atalan numbers.
\newblock {Annals of Combinatorics}, {\bf 1(20)} (2016), 21--58.

\bibitem{armstrong2012shi}
{\em D.~Armstrong and B.~Rhoades.}
\newblock The {S}hi arrangement and the {I}sh arrangement.
\newblock {Transactions of the American Mathematical Society},
  {\bf 364(3)} (2012), 1509--1528.

\bibitem{artin1944galois}
{\em E.~Artin.}
\newblock {Galois theory}.
\newblock Number~2 in Notre Dame Math. Lectures. University of Notre Dame
  Press (1944).

\bibitem{athanasiadis1998free}
{\em C.~A. Athanasiadis.}
\newblock On free deformations of the braid arrangement.
\newblock {European Journal of Combinatorics}, {\bf 19(1)} (1998), 7--18.

\bibitem{athanasiadis1999simple}
{\em C.~A. Athanasiadis and S.~Linusson.}
\newblock A simple bijection for the regions of the {S}hi arrangement of
  hyperplanes.
\newblock {Discrete mathematics}, {\bf 204(1-3)} (1999), 27--39.

\bibitem{aval2015interlaced}
{\em J.-C. Aval and F.~Bergeron.}
\newblock Interlaced rectangular parking functions.
\newblock {arXiv preprint arXiv:1503.03991} (2015).

\bibitem{bergeron2015compositional}
{\em F.~Bergeron, A.~Garsia, E.~S. Leven, and G.~Xin.}
\newblock Compositional $(km, kn)$-shuffle conjectures.
\newblock {International Mathematics Research Notices},
  {\bf 2016(14)} (2015), 4229--4270.

\bibitem{carlsson2015proof}
{\em E.~Carlsson and A.~Mellit.}
\newblock A proof of the shuffle conjecture.
\newblock {Journal of the American Mathematical Society}, {\bf 31(3)} (2018), 661--697.

\bibitem{carlsson2018affine}
{\em E.~Carlsson and A.~Oblomkov.}
\newblock Affine {S}chubert calculus and double coinvariants.
\newblock {arXiv preprint arXiv:1801.09033} (2018).

\bibitem{ceballos2016combinatorics}
{\em C.~Ceballos, T.~Denton, and C.~R. Hanusa.}
\newblock Combinatorics of the zeta map on rational {D}yck paths.
\newblock {Journal of Combinatorial Theory, Series A}, {\bf 141} (2016), 33--77.

\bibitem{cellini2002ad}
{\em P.~Cellini and P.~Papi.}
\newblock ad-nilpotent ideals of a {B}orel subalgebra {II}.
\newblock {Journal of Algebra}, {\bf 258(1)} (2002), 112--121.

\bibitem{cherednik2003double}
{\em I.~Cherednik.}
\newblock Double affine {H}ecke algebras and difference {F}ourier transforms.
\newblock {Inventiones mathematicae}, {\bf 152(2)} (2003), 213--303.

\bibitem{carleton}
{\em G.~Davis, A.~Maurer, and J.~Michelman.}
\newblock An open problem in in the combinatorics of {M}acdonald polynomials.
\newblock (2011),
\newblock https://apps.carleton.edu/curricular/math/assets/compspaper.pdf.

\bibitem{fishel2013counting}
{\em S.~Fishel, E.~Tzanaki, and M.~Vazirani.}
\newblock Counting {S}hi regions with a fixed separating wall.
\newblock {Annals of Combinatorics}, {\bf 4(17)} (2013), 671--693.

\bibitem{fishel2010bijection}
{\em S.~Fishel and M.~Vazirani.}
\newblock A bijection between dominant {S}hi regions and core partitions.
\newblock {European Journal of Combinatorics}, {\bf 31(8)} (2010), 2087--2101.

\bibitem{foata1968netto}
{\em D.~Foata.}
\newblock On the {N}etto inversion number of a sequence.
\newblock {Proceedings of the American Mathematical Society},
  {\bf 19(1)} (1968), 236--240.

\bibitem{foata1978major}
{\em D.~Foata and M.-P. Sch{\"u}tzenberger.}
\newblock Major index and inversion number of permutations.
\newblock {Mathematische Nachrichten}, {\bf 83(1)} (1978), 143--159.

\bibitem{garsia2002proof}
{\em A.~M. Garsia and J.~Haglund.}
\newblock A proof of the $q, t$-{C}atalan positivity conjecture.
\newblock {Discrete Mathematics}, {\bf 256(3)} (2002), 677--717.

\bibitem{garsia1996remarkable}
{\em A.~M. Garsia and M.~Haiman.}
\newblock A remarkable $q, t$-{C}atalan sequence and $q$-{L}agrange inversion.
\newblock {Journal of Algebraic Combinatorics}, {\bf 5(3)} (1996), 191--244.

\bibitem{garsia1984group}
{\em A.~M. Garsia and D.~Stanton.}
\newblock Group actions on {S}tanley-{R}eisner rings and invariants of
  permutation groups.
\newblock {Advances in Mathematics}, {\bf 51(2)}: (1984), 107--201.

\bibitem{gillespie2016combinatorial}
{\em M.~M. Gillespie.}
\newblock A combinatorial approach to the $q,t$-symmetry relation in
  {M}acdonald polynomials.
\newblock {The Electronic Journal of Combinatorics}, {\bf 23(2)} (2016), P2--3.

\bibitem{gorsky2013compactified}
{\em E.~Gorsky and M.~Mazin.}
\newblock Compactified {J}acobians and $q, t$-{C}atalan numbers, {I}.
\newblock {Journal of Combinatorial Theory, Series A}, {\bf 120(1)} (2013), 49--63.

\bibitem{gorsky2014compactified}
{\em E.~Gorsky and M.~Mazin.}
\newblock Compactified {J}acobians and $q, t$-{C}atalan numbers, {II}.
\newblock {Journal of Algebraic Combinatorics}, {\bf 39(1)} (2014), 153--186.

\bibitem{gorsky2016affine}
{\em E.~Gorsky, M.~Mazin, and M.~Vazirani.}
\newblock Affine permutations and rational slope parking functions.
\newblock {Transactions of the American Mathematical Society},
  {\bf 368(12)} (2016), 8403--8445.

\bibitem{gorsky2017rational}
{\em E.~Gorsky, M.~Mazin, and M.~Vazirani.}
\newblock Rational {D}yck paths in the non relatively prime case.
\newblock {The Electronic Journal of Combinatorics}, {\bf 24(3)} (2017), 3--61.

\bibitem{gorsky2015refined}
{\em E.~Gorsky and A.~Negu{\c{t}}.}
\newblock Refined knot invariants and {H}ilbert schemes.
\newblock {Journal de Math{\'e}matiques Pures et Appliqu{\'e}es},
  {\bf 104(3)} (2015), 403--435.

\bibitem{haglund2003conjectured}
{\em J.~Haglund.}
\newblock Conjectured statistics for the $q, t$-{C}atalan numbers.
\newblock {Advances in Mathematics}, {\bf 175(2)} (2003), 319--334.

\bibitem{haglund2008q}
{\em J.~Haglund.
\newblock {The $q,t$-{C}atalan numbers and the space of diagonal harmonics:
  with an appendix on the combinatorics of {M}acdonald polynomials}.}
\newblock American Mathematical Society Vol. 41 (2008).

\bibitem{haglund2005combinatorial}
{\em J.~Haglund, M.~Haiman, N.~Loehr, J.~B. Remmel, and A.~Ulyanov.}
\newblock A combinatorial formula for the character of the diagonal
  coinvariants.
\newblock {Duke Mathematical Journal}, {\bf 126(2)} (2005), 195--232.

\bibitem{haglund2005conjectured}
{\em J.~Haglund and N.~Loehr.}
\newblock A conjectured combinatorial formula for the {H}ilbert series for
  diagonal harmonics.
\newblock {Discrete Mathematics}, {\bf 298(1)} (2005), 189--204.

\bibitem{haglund2017lecture}
{\em J.~Haglund and G.~Xin.}
\newblock Lecture notes on the {C}arlsson-{M}ellit proof of the shuffle
  conjecture.
\newblock {arXiv preprint arXiv:1705.11064} (2017).

\bibitem{haiman1994conjectures}
{\em M.~Haiman.}
\newblock Conjectures on the quotient ring by diagonal invariants.
\newblock {Journal of Algebraic Combinatorics}, {\bf 3(1)} (1994), 17--76.

\bibitem{haiman1998t}
{\em M.~Haiman.}
\newblock $t, q$-{C}atalan numbers and the {H}ilbert scheme.
\newblock {Discrete Mathematics}, {\bf 193(1-3)} (1998), 201--224.

\bibitem{haiman2002vanishing}
{\em M.~Haiman.}
\newblock Vanishing theorems and character formulas for the {H}ilbert scheme of
  points in the plane.
\newblock {Inventiones mathematicae}, {\bf 149(2)} (2002), 371--407.

\bibitem{hikita2014affine}
{\em T.~Hikita.}
\newblock Affine {S}pringer fibers of type ${A}$ and combinatorics of diagonal
  coinvariants.
\newblock {Advances in Mathematics}, {\bf 263} (2014), 88--122.

\bibitem{konheim1966occupancy}
{\em A.~Konheim and B.~Weiss.}
\newblock An occupancy discipline and applications.
\newblock {SIAM Journal on Applied Mathematics}, {\bf 14(6)} (1966), 1266--1274.

\bibitem{krattenthaler2002enumeration}
{\em C.~Krattenthaler, L.~Orsina, and P.~Papi.}
\newblock Enumeration of $ad$-nilpotent $\mathfrak b$-ideals for simple {L}ie
  algebras.
\newblock {Advances in Applied Mathematics}, {\bf 28(3-4)} (2002), 478--522.

\bibitem{lapointe2005tableaux}
{\em L.~Lapointe and J.~Morse.}
\newblock Tableaux on $k+1$-cores, reduced words for affine permutations, and
  $k$-{S}chur expansions.
\newblock {Journal of Combinatorial Theory, Series A}, {\bf 112(1)} (2005), 44--81.

\bibitem{leven2014bijections}
{\em E.~Leven, B.~Rhoades, and A.~T. Wilson.}
\newblock Bijections for the {S}hi and {I}sh arrangements.
\newblock {European Journal of Combinatorics}, {\bf 39} (2014), 1--23.

\bibitem{loehr2005conjectured}
{\em N.~Loehr.}
\newblock Conjectured statistics for the higher $q, t$-{C}atalan sequences.
\newblock {Journal of Combinatorics}, {\bf 12(1)} (2005), R9.

\bibitem{loehr2004conjectured}
{\em N.~Loehr and J.~B. Remmel.}
\newblock Conjectured combinatorial models for the {H}ilbert series of
  generalized diagonal harmonics modules.
\newblock {Journal of Combinatorics}, {\bf 11(3)} (2004), R68.

\bibitem{lusztig1983some}
{\em G.~Lusztig.}
\newblock Some examples of square integrable representations of semisimple
  $p$-adic groups.
\newblock {Transactions of the American Mathematical Society},
  {\bf 277(2)} (1983), 623--653.

\bibitem{macmahon1913indices}
{\em P.~A. MacMahon.}
\newblock The indices of permutations and the derivation therefrom of functions
  of a single variable associated with the permutations of any assemblage of
  objects.
\newblock {American Journal of Mathematics}, {\bf 35(3)} (1913), 281--322.

\bibitem{mellit2016toric}
{\em A.~Mellit.}
\newblock Toric braids and $(m,n)$-parking functions.
\newblock {arXiv preprint arXiv:1604.07456} (2016).

\bibitem{misramiwa}
{\em K.~Misra and T.~Miwa.}
\newblock Crystal base for the basic representation of $U_q(\widehat{\mathfrak{sl}}(n))$.
\newblock {Communications in mathematical physics} {\bf 134.1} (1990), 79--88.

\bibitem{Munk}
{\em J. Munkres.}
\newblock Topology: a first course.
\newblock Prentice-Hall, Englewood Cliffs, NJ, 1975

\bibitem{postnikov2000deformations}
{\em A.~Postnikov and R.~P. Stanley.}
\newblock Deformations of {C}oxeter hyperplane arrangements.
\newblock {Journal of Combinatorial Theory, Series A}, {\bf 91(1-2)} (2000), 544--597.

\bibitem{shi2006kazhdan}
{\em J.-y. Shi.
\newblock {The {K}azhdan-{L}usztig cells in certain affine {W}eyl groups}.}
\newblock Springer Vol. 1179 (2006).

\bibitem{shi1987sign}
{\em J.-y. Shi.}
\newblock Sign types corresponding to an affine {W}eyl group.
\newblock {Journal of the London Mathematical Society}, {\bf 2(1)} (1987), 56--74.

\bibitem{sommers2003b}
{\em E.~Sommers.}
\newblock B-stable ideals in the nilradical of a {B}orel subalgebra.
\newblock {Canadian Mathematical Bulletin}, {\bf 48(3)} (2005), 460--472.

\bibitem{stanley1996hyperplane}
{\em R.~P.~Stanley.}
\newblock Hyperplane arrangements, interval orders, and trees.
\newblock {Proceedings of the National Academy of Sciences},
  {\bf 93(6)} (1996), 2620--2625.

\bibitem{stanley1998hyperplane}
{\em R.~P.~Stanley.}
\newblock Hyperplane arrangements, parking functions and tree inversions.
\newblock In {Mathematical essays in honor of Gian-Carlo Rota}. Springer Vol. 161 (1998),
  359--375.

\bibitem{thiel2016anderson}
{\em M.~Thiel.}
\newblock From {A}nderson to zeta.
\newblock {Advances in Applied Mathematics}, {\bf 81} (2016), 156--201.

\bibitem{thiel2017strange}
{\em M.~Thiel and N.~Williams.}
\newblock Strange expectations and simultaneous cores.
\newblock {Journal of Algebraic Combinatorics}, {\bf 45(1)} (2017), 219--261.

\bibitem{thomas2014cyclic}
{\em H.~Thomas and N.~Williams.}
\newblock Cyclic symmetry of the scaled simplex.
\newblock {Journal of Algebraic Combinatorics}, {\bf 39(2)} (2014), 225--246.

\bibitem{thomas2015sweeping}
{\em H.~Thomas and N.~Williams.}
\newblock Sweeping up zeta.
\newblock {Selecta Mathematica}, {\bf 24(3)} (2018), 2003--2034.

\bibitem{todd1976fixedpoint}
{\em M.~J.~Todd.}
\newblock The Computation of Fixed Points and Applications.
\newblock {Springer-Verlag}, {Berlin-New York}, 1976.
}}
\end{thebibliography}


\end{document}